\newtheorem{thm}{Theorem}
\newtheorem{thmx}{Theorem} 
\newtheorem{prop}[thm]{Proposition}
\newtheorem{lemma}[thm]{Lemma}
\newtheorem{cor}[thm]{Corollary}
\theoremstyle{definition}
\newtheorem{defn}{Definition}
\newtheorem{ex}[thm]{Example}
\theoremstyle{remark}
\newtheorem{remark}{Remark}
\def\T{{}^t\!}
\def\C{\mathbb{C}}
\def\R{\mathbb{R}}
\def\CP{\mathbb{CP}}
\def\Z{\mathbb{Z}}
\def\Q{\mathbb{Q}}
\def\Sp{\mathrm{Sp}}
\def\SL{\mathrm{SL}}
\def\det{\mathrm{det\,}}
\def\dim{\mathrm{dim\,}}
\def\TT{\mathcal {T\,}}
\def\TTT{\widetilde{\mathcal {T\,}}}
\begin{document}
\title[]{Conformal geometry of isotropic curves\\ in the complex quadric}

\author{Emilio Musso}
\address{(E. Musso) Dipartimento di Scienze Matematiche, Politecnico di Torino,
Corso Duca degli Abruz\-zi 24, I-10129 Torino, Italy}
\email{emilio.musso@polito.it}

\author{Lorenzo Nicolodi}
\address{(L. Nicolodi) Di\-par\-ti\-men\-to di Scienze Ma\-te\-ma\-ti\-che, Fisiche e Informatiche,
Uni\-ver\-si\-t\`a di Parma, Parco Area delle Scienze 53/A,
I-43124 Parma, Italy}
\email{lorenzo.nicolodi@unipr.it}

\thanks{Authors partially supported by
PRIN 2017 ``Real and Complex Manifolds: Topology, Geometry and Holomorphic Dynamics''
(protocollo 2017JZ2SW5-004);
by the GNSAGA of INDAM; and by the FFABR Grant 2017 of MIUR. The present research was partially
supported by MIUR grant ``Dipartimenti di Eccellenza'' 2018–2022, CUP: E11G18000350001, DISMA, Politecnico
di Torino}

\subjclass[2000]{53C15; 53C18; 53C42; 53A55; 53A20; 14H99}



\keywords{Complex conformal geometry; Grassmannian of Lagrangian 2-planes; isotropic curves; complex symplectic group; projective structures on Riemann surfaces}

\begin{abstract}
Let $\Q_3$ be the complex 3-quadric endowed with its standard complex conformal structure.
We study the complex conformal geometry of isotropic curves in $\Q_3$. 
By an isotropic curve we mean a nonconstant holomorphic map from a Riemann surface into
$\Q_3$, null with respect to the conformal structure of $\Q_3$.
The relations between isotropic curves and a number of 
relevant classes of surfaces in Riemannian and Lorentzian spaceforms are discussed.
\end{abstract}

\maketitle

\section{Introduction}\label{s:intro}
Let $\Q_3$ be the 
nonsingular complex hyperquadric of $\CP^4$, regarded as the Grassmannian of Lagrangian 2-planes of $\C^4$
and equipped with its canonical $\Sp(2,\C)$-invariant complex conformal structure ${\bf g}$ (cf. \cite{JM,KO1982,LeBrun82}). 
An \textit{isotropic curve} is a nonconstant  holomorphic map $f : S \to \Q_3$ from a Riemann surface $S$ into $\Q_3$,
such that $f^*({\bf g})=0$. 
(The complex null geodesics, i.e., the projective lines in $\CP^4$ contained in $\Q_3$, are
excluded from our consideration.) 
The purpose of this paper is to provide a systematic study of the complex conformal geometry of isotropic curves. 
This topic was inspired 
by Chern's article \cite{Ch-S4} and Bryant's papers
\cite{Br-S4,Br1988,Br2018}.

\vskip0.1cm
The main motivation comes from differential geometry,
and more specifically from surface geometry. 
In fact, the point of view of isotropic curves provides a unifying framework for 
a number of different
classes of immersed surfaces
according to the following scheme:
an isotropic curve is naturally associated to any immersion of the class; and conversely an immersion 
can be recovered from the isotropic curve. 
Examples that fit into this scheme include minimal surfaces in Euclidean space $\R^3$, maximal surfaces in 
Minkowski space $\R^{1,2}$,
surfaces of constant mean curvature one (CMC 1) and flat fronts in hyperbolic space  
${\mathcal H}^3$, spacelike CMC 1 surfaces and flat fronts in de~Sitter space ${\mathcal H}^{1,2}$, and 
superminimal immersions in the four-sphere $S^4$. 
For minimal surfaces, this amounts essentially to the classical description of a minimal surface $f: S \to \mathbb R^3$
as the real part of a holomorphic null curve $F : S \to \C^3$
(cf. \cite{DHS,Go,W}). 
For the case of maximal surfaces in $\R^{1,2}$, see \cite{KO1983,Lee2005, Palmer1990}; for the case of 
CMC 1 surfaces in hyperbolic space, besides Bryant's seminal paper \cite{Br1987}, see 
\cite{Bo-Pe2005,RUY,Small1994,ST,UY-Annals};
for flat fronts in hyperbolic 3-space, see \cite{ET,GMM-MA,KUY-PJM,KUY-O,MN-Lincei,ST};
for spacelike CMC 1 surfaces and flat fronts in de~Sitter space, see \cite{Lee2005,Palmer1990}.
Finally, for the case of superminimal surfaces in $S^4$, see \cite{Br-S4,Ch-S4}. 
Other references will be given throughout the paper.

\vskip0.1cm

\noindent \textit{Description of results}. Section \ref{s:pre} collects some background 
material about the holomorphic conformal structure of $\Q_3$ and the contact structure of $\CP^3$. 
In particular, we  briefly discuss the projections of suitable Zariski open sets of $\Q_3$ onto $\mathbb R^3$, 
$\mathbb R^{1,2}$, ${\mathcal H}^{3}$
and ${\mathcal H}^{1,2}$, the projections of suitable Zariski open sets of $\CP^3$ onto ${\mathcal H}^{3}$
and ${\mathcal H}^{1,2}$, and reformulate the twistor projections of $\CP^3$ onto $S^4$.

Section \ref{s:proj-struct} 
is concerned with
complex projective structures on a Riemann surface (cf. \cite{Ca5,K1,LP2009,OvTa2005}).
We provide the needed background on projective structures and
show that 
naturally associated to a 
projective structure on a Riemann surface
there is an
operator ${\mathfrak d}$ from quartic 
to quadratic meromorphic differentials.

Section \ref{s: isotropic curves} studies the complex conformal geometry of isotropic curves by the method of moving frames
and discusses the relation of isotropic curves with surface geometry.
First, we briefly recall the construction of the branched conformal (spacelike) immersions and fronts in 
$\R^3$, $\R^{2,1}$, ${\mathcal H}^3$, ${\mathcal H}^{2,1}$, and $S^4$ associated to an isotropic curve $f$, 
the {\it 
surfaces tamed} by $f$. 
The main properties of these surfaces are collected in Theorem \ref{thm:associatedsurfaces}.
Next, generalizing the classical Goursat transformation for minimal surfaces (cf. \cite{Go,JMN}), we define the conformal 
Goursat transformation 
for surfaces tamed by isotropic curves. 
The close relationship between isotropic curves and projective structures on a Riemann surface
is established in Theorem \ref{thm:FourthReduction},
which shows that to any isotropic curve $f: S\to \Q_3$ one can associate a meromorphic projective 
structure and a meromorphic quartic differential $\delta$ on $S$. 
We then characterize the conformal cycles, i.e., the isotropic curves all of whose points are 
zeros of the quartic differential (heptactic points),
and describe their corresponding tamed surfaces. 
%
%
Next, we introduce the notion of
osculating cycle for an isotropic curve $f$ and geometrically characterize 
heptactic points
in terms of the order of contact of the isotropic curve with the osculating cycle. 
In Theorem \ref{thm:equivalence}, we solve the equivalence problem for generic isotropic curves 
in terms of 
the meromorphic differentials 
$\delta$ and ${\mathfrak d}(\delta)$. 
The meromorphic function ${\mathfrak d}^2(\delta)^2/\delta$ is called the bending of the isotropic curve.
In Theorem \ref{thm:def-rig}, 
viewing $\Q_3$ as a homogeneous space of $\Sp(2,\C)$,
we address the question of rigidity and deformation for isotropic curves
according to the general deformation theory of submanifolds in homogeneous spaces
as formulated by Cartan \cite{Ca6}  and further developed by Griffiths and Jensen
\cite{Gr,JensenJDG,JM}.
We prove that a generic isotropic curve is deformable of order four and rigid to order five. 
%

Section \ref{s:cc isotropic curves} is devoted to the study of isotropic curves with constant bending.
Referring to the classical notion of $W$-curve in projective 3-space (cf. \cite{RB,Ch-S4}), we introduce the notion of isotropic $W$-curves
and describe their tamed 
surfaces. 
In the main theorem of the section, Theorem \ref{thm:cpt-cc}, we prove that a compact isotropic curve with constant bending 
is a branched reparametrization of an isotropic $W$-curve.
To better illustrate the connections
between the theory of isotropic curves and surface geometry, a number of examples is considered throughout the paper.
For figures
and for some symbolic computations, we used the software \textsl{Mathematica}.

\section{Preliminaries}\label{s:pre}

\subsection{The complex symplectic group}\label{ss:cpx-sym-gr}
Consider $\C^4$ with the symplectic form $\omega=dz^1\wedge dz^3+dz^2\wedge dz^4$.
Let $\mathrm{Sp}(2,\C)$ be the symplectic group of $\omega$ and $\mathfrak{sp}(2,\C)$ its Lie algebra. 
Denote by $({\bf e}_1,\dots,{\bf e}_4)$ the natural basis of $\C^4$ and by $A_j:\mathrm{Sp}(2,\C)\to \C^4$ the 
holomorphic map taking $A\in \mathrm{Sp}(2,\C)$ to $A{\bf e}_j$, $j=1,\dots,4$. The Maurer--Cartan form $\varphi$  of $\mathrm{Sp}(2,\C)$ 
is the $\mathfrak{sp}(2,\C)$-valued holomorphic 1-form such that $dA_i = \varphi^j_i A_j$, $i=1,\dots,4$.\footnote{Summation over 
repeated indices is assumed.} 
Let  ${\mathfrak C}^5$ be the complex vector space spanned by the skew-symmetric matrices
$$\mbox{\small $
{\rm L}_1={\bf e}_2^1-{\bf e}_1^2,\,\, {\rm L}_2={\bf e}_4^1-{\bf e}_1^4,\,\,
{\rm L}_3=\frac{1}{\sqrt{2}}({\bf e}_3^1-{\bf e}_1^3 -{\bf e}_4^2 +{\bf e}_2^4),\,\,
{\rm L}_4={\bf e}_3^2-{\bf e}_2^3,\,\, {\rm L}_5={\bf e}_4^3-{\bf e}_3^4, $}
$$
where ${\bf e}_i^j$ denotes the elementary $4\times 4$ matrix with 1
in the $(i,j)$ place and $0$ elsewhere, $i,j =1,\dots,4$.
On ${\mathfrak C}^5$, we consider the nondegenerate bilinear form 
$g_{\mathfrak C}(X,Y)=\frac{1}{2}{\rm tr}(JXJY)$, where $J={\bf e}_1^3+{\bf e}_2^4-{\bf e}_3^1-{\bf e}_4^2$.  
Then, $({\rm L}_1,\dots , {\rm L}_5)$ is a basis of ${\mathfrak C}^5$ such that  
$$
  (g_{\mathfrak C}({\rm L}_a,{\rm L}_b))_{1\le a,b\le 5}={\bf b}^5_1+{\bf b}^4_2+{\bf b}^3_3+{\bf b}^2_4+{\bf b}_5^1,
   $$
where ${\bf b}_a^b$ denote the elementary $5\times 5$ matrices.
%
For every $A\in \Sp(2,\C)$, the linear map ${\mathtt L}_{A}: {\mathfrak C}^5\ni X\mapsto A X \T\! A \in {\mathfrak C}^5$ 
is an orthogonal transformation
and ${\mathtt L}:\Sp(2,\C)\ni A \mapsto  {\mathtt L}_{A} \in \mathrm{O}({\mathfrak C}^5,g_{\mathfrak C})$ 
is a spin covering homomorphism. 
Let ${\mathfrak R}^5$ be the 5-dimensional real subspace of $\mathfrak{C}^5$ spanned by 
$$\mbox{\small $
{\rm E}_1=\frac{1}{\sqrt{2}}({\rm L}_1+{\rm L}_5),\,\, {\rm E}_2=\frac{i}{\sqrt{2}}({\rm L}_1-{\rm L}_5),\,\, {\rm E}_3={\rm L}_3,\,\,
{\rm E}_4=\frac{1}{\sqrt{2}}({\rm L}_2+{\rm L}_4),\,\, {\rm E}_5=\frac{i}{\sqrt{2}}({\rm L}_2-{\rm L}_4). $}
$$
Restricting  $g_{\mathfrak C}$ to ${\mathfrak R}^5$ we get a positive definite scalar product, denoted by $g_{{\mathfrak R}}$. 
By construction,
$({\rm E}_1,\dots,{\rm E}_5)$ is an orthogonal basis. The subspace ${\mathfrak R}^5$  is ${\rm Sp}(2)$-invariant and 
${\mathtt L}:  {\Sp}(2)\ni A\mapsto {\mathtt L}_{A}\in  \mathrm{O}({\mathfrak R}^5,g_{{\mathfrak R}})$ is a spin homomorphism.  
Consider ${\rm S}^{3}(\C^2)$, the third symmetric power of $\C^2$, and let ${\mathtt F}:{\rm S}^{3}(\C^2)\to \C^4$ 
be the isomorphism defined by
${\mathtt F}({\bf a}_{111})={\bf e}_1$, $ {\mathtt F}({\bf a}_{112})=-\sqrt[3]{2/9}{\bf e}_2$,  ${\mathtt F}({\bf a}_{222})=\sqrt[3]{1/6}{\bf e}_3$,
${\mathtt F}({\bf a}_{122})={\bf e}_4$,
where $({\bf a}_1,{\bf a}_2)$ is the natural basis of $\C^2$ and ${\bf a}_{ijk}$ is the symmetric tensor product 
${\bf a}_j{\bf a}_j{\bf a}_k$. 
Let $\widehat{{\mathtt S}}$ be the 
representation of ${\rm SL}(2,\C)$ on ${\rm S}^{3}(\C^2)$
induced by the standard representation of $\SL(2,\C)$ on $\C^2$.
Then
${\mathtt S}:\mathrm{SL}(2,\C)\ni X\mapsto  {\mathtt F}\circ \widehat{{\mathtt S}}(X)\circ  {\mathtt F}^{-1}\in \Sp(2,\C)$  is a Lie group monomorphism. 
The subgroup ${\mathtt S}( \mathrm{SL}(2,\C))$ will be denoted by $\mathrm{H}$.

\subsection{The conformal structure of $\Q_3$}

Let $\Q_3$ be the compact complex 3-fold of Lagrangian 2-planes of
$\C^4$. The map $\pi: \Sp(2,\C)\ni A\mapsto [A_1\wedge A_2]\in \Q_3$ is a holomorphic principal bundle with structure group 
$$\mathrm{G}_0=\{A\in \Sp(2,\C) \mid [A_1\wedge A_2]=[{\bf e}_1\wedge {\bf e}_2]\}.$$  
Let $\TT=\{(P,{\bf v})\in \Q_3\times \C^4 \mid {\bf v}\in P\}$ be the tautological bundle of $\Q_3$ and  $\TTT$ 
the quotient bundle $(\Q_3\times \C^4)/\TT$.  If  $A:U\subset \Q_3\to \Sp(2,\C)$ is a holomorphic cross section of $\pi$, then 
$(\varphi_1^3,\varphi_1^4,\varphi_2^4)$ is a holomorphic coframe and $A_1\wedge A_2$ is a 
trivialization of $\bigwedge^2(\TT)$.\footnote{By slightly abusing notation, we will simply write $\varphi^i_j$ instead of $A^*(\varphi^i_j)$.} 
For every ${\bf v}\in \C^4$ and $P\in U$, we denote by $|[{\bf v}]|_P$ the equivalence class of ${\bf v}$ in $\TTT|_P$. Then 
$U \ni P\mapsto |[A_3|_P]|_P\wedge |[A_4|_P]|_P$ is a local trivialization of $\bigwedge^2(\TTT)$. Let  $(A_1\wedge A_2)^*$ be the dual  of $A_1\wedge A_2$. Then,
$${\bf g}_{|U}=(\varphi^3_1\varphi^4_2-(\varphi_1^4)^2)\otimes (A^1\wedge A^2)^*\otimes (|[A_3]|\wedge |[A_4]|))$$
is independent of the choice of $A$ and defines a nondegenerate holomorphic section ${\bf g}$ of $\mathrm{S}^{(2,0)}(\Q_3)\otimes \bigwedge^2(\TT)^*\otimes \bigwedge^2(\TTT)$. Then ${\bf g}$ determines a holomorphic $\Sp(2,\C)$-invariant conformal structure 
on $\Q^3$ (cf. \cite{KO1982,LeBrun82}).

\subsubsection{The Pl\"ucker  map} 

Let ${\bf x}\wedge {\bf y}\in \bigwedge^2(\C^4)$ be a Lagrangian bivector. Then  the $4\times 4$ skew-symmetric matrix  
${\mathtt l}({\bf x}\wedge {\bf y})={\bf x} \, \T{\bf y}-{\bf y} \, \T{\bf x}$ belongs to the nullcone of ${\mathfrak C}^5$. 
The {\it Pl\"ucker map}
$\lambda :  \Q_3\ni [{\bf x}\wedge {\bf y}]\mapsto [{\mathtt l}({\bf x}\wedge {\bf y})] \in {\mathbb P}\mathfrak{C}^5$ 
is a conformal biholomorphism of $\Q_3$ onto the null quadric ${\mathcal Q}_3$ of $ {\mathbb P}\mathfrak{C}^5$. 
The components of ${\mathtt l}$  with respect to $({\rm L}_1,\dots ,{\rm L}_5)$ will be denoted by ${\mathtt l}^a$, $a=1,\dots ,5$.

\subsubsection{Affine and unimodular conformal charts} 

On $\C^3$ with coordinates $(w^1,w^2,w^3)$, consider the quadratic form $g_{\C^3}=dw^1dw^3-(dw^2)^2$. 
Let ${\mathcal U}\subset \Q_3$ be the complement of the hyperplane section 
${\mathcal A}=\{[{\bf x}\wedge {\bf y}]\in \Q_3 \mid {\mathtt l}^1({\bf x}\wedge {\bf y})=0\}$. Then
$$
   \jmath: {\mathcal U}\ni [{\bf x}\wedge {\bf y}]\mapsto \frac{1}{{\mathtt l}^1({\bf x}\wedge {\bf y})}\, \left(-{\mathtt l}^4({\bf x}\wedge {\bf y}),
 {\mathtt l}^3({\bf x}\wedge {\bf y}),{\mathtt l}^2({\bf x}\wedge {\bf y})\right)\in \C^3
    $$
is a conformal biholomorphism, the {\it affine conformal chart} of $\Q_3$.
%
%
On $\SL(2,\C)$, consider the bi-invariant quadratic form $\widehat{g}=dx_1^1dx_2^2-dx^2_1dx^1_2$. 
Let $\widetilde{{\mathcal U}}\subset \Q_3$ be the complement of the hyperplane section 
${\mathcal B}=\{[{\bf x}\wedge {\bf y}]\in \Q_3 \mid {\mathtt l}^3({\bf x}\wedge {\bf y})=0\}$. 
The map
$\tilde{\jmath}: \widetilde{{\mathcal U}} \to {\rm SL}(2,\C)$ defined by
$$
 \tilde{\jmath}([{\bf x}\wedge {\bf y}])= \frac{\sqrt{2}}{{\mathtt l}^3({\bf x}\wedge {\bf y})}
\left({\mathtt l}^1({\bf x}\wedge {\bf y}){\bf a}_1^1-{\mathtt l}^2({\bf x}\wedge {\bf y}){\bf a}_1^2
-{\mathtt l}^4({\bf x}\wedge {\bf y}){\bf a}_2^1-{\mathtt l}^5({\bf x}\wedge {\bf y}){\bf a}_2^2\right),
$$
where ${\bf a}_r^s$, $r,s =1,2$, denote the elementary $2\times 2$ matrices,
is a conformal biholomorphism, the {\it unimodular conformal chart} of $\Q_3$. 

\subsection{The contact structure of $\CP^3$}
On $\CP^3$ we consider the $\Sp(2,\C)$-invariant contact structure induced  by $\zeta=-z^3dz^1-z^1dz^3+z^4dz^2-z^2dz^4$.  

\subsubsection{The unimodular contact chart}  

On ${\rm PSL}(2,\C)$, consider the left-invariant contact form 
$\widetilde{\zeta} = x^2_2dx^1_1-x^1_1dx^2_2+x^2_1 dx^1_2 - x_2^1dx^2_1$. Let ${\mathcal V}\subset \CP^3$ be the complement of 
the 2-dimensional quadric ${\mathcal Q}_2=\{[\xi]\in \CP^3 \mid \xi^1 \xi^3-\xi^2 \xi^4=0\}$. The  map
$$
 \jmath^{\sharp}: {\mathcal V}\ni [\xi]\mapsto  \left[\left(\xi^2{\bf a}_1^1 +\xi^1{\bf a}_1^2+\xi^3{\bf a}_2^1+\xi^4{\bf a}_2^2
  \right) \right]  \in {\rm PSL}(2,\C)
   $$
is a contact biholomorphism,  the {\it unimodular contact chart} of $\CP^3$. 


\subsection{Fibrations over Riemannian and Lorentzian space forms} 

\subsubsection{Hyperbolic and de~Sitter projections}

Let ${\mathfrak h}(2,\C)$ be the space of $2\times 2$ hermitian matrices with scalar product 
${2}(\alpha,\beta)={\det}(\alpha)+{\det}(\beta) - {\det}(\alpha+\beta)$. The hyperbolic and de~Sitter spaces can be 
realized as ${\mathcal H}^3=\{\alpha\in {\mathfrak h}(2,\C) \mid {\det}(\alpha)=1,\, {\mathrm{tr} (\alpha) >0}\}$ and 
${\mathcal H}^{1,2}=\{\alpha\in {\mathfrak h}(2,\C) \mid{\det}(\alpha)=-1\}$, equipped with the Riemannian and Lorentzian 
structures inherited from  $(\cdot\,, \cdot)$. 
%
The projections $\pi_{{\mathcal H}^{3}}: {\rm PSL}(2,\C)\ni [B]\mapsto  B \, \T\bar{B}\in  {\mathcal H}^{3}$ and 
$\pi_{{\mathcal H}^{1,2}}: {\rm PSL}(2,\C)\ni [B]\mapsto B ({\bf a}_1^1-{\bf a}_2^2)\,  \T\bar{B}\in {\mathcal H}^{1,2}$ 
are, respectively, the bundle of oriented orthonormal frames of ${\mathcal H}^{3}$ 
and that of oriented and future-directed orthonormal frames of ${\mathcal H}^{1,2}$. 
%
%
In turn,
\[
\begin{cases}
\tilde{\ell}_{{\mathcal H}^{3}} = \pi_{{\mathcal H}^3}\circ [\tilde{\jmath}]:\widetilde{{\mathcal U}}\to {\mathcal H}^3,\\
\tilde{\ell}_{{\mathcal H}^{1,2}}=  \pi_{{\mathcal H}^{2,1}}\circ [\tilde{\jmath}]:\widetilde{{\mathcal U}}\to {\mathcal H}^{1,2},\\
{\ell}^{\sharp}_{{\mathcal H}^{3}} = \pi_{{\mathcal H}^3}\circ \jmath^{\sharp}: {\mathcal V}\to {\mathcal H}^3,\\ 
{\ell}^{\sharp}_{{\mathcal H}^{1,2}}=  \pi_{{\mathcal H}^{2,1}}\circ \jmath^{\sharp}: {\mathcal V}\to {\mathcal H}^{1,2}\end{cases}\] give on $\widetilde{{\mathcal U}}\subset \Q_3$ and
${\mathcal V}\subset \CP^3$ structures of principal bundles either on ${\mathcal H}^3$ or on  ${\mathcal H}^{1,2}$. 
The structure groups are ${\rm SU}(2)$ or ${\rm SU}(1,1)$, respectively.

\subsubsection{Euclidean and Minkowski projections} 
Let $\R^3$ and $\R^{1,2}$ denote Euclidean and Minkowski 3-space, respectively. The maps 
 \[\begin{cases}
 \pi_{\R^3}:\C^3 \ni (z_1,z_2,z_3) \mapsto \big(\frac{{\rm Re}(z_1+z_3)}{2}, \frac{{\rm Im}(z_1-z_3)}{2},{\rm Re}(z_2)\big)\in \R^3,\\
  \pi_{\R^{1,2}}:  \C^3 \ni (z_1,z_2,z_3)\mapsto \big({\rm Re}(z_2),\frac{{\rm Im}(z_1+z_3)}{2}, \frac{{\rm Im}(z_1-z_3)}{2}\big)\in \R^{1,2}
\end{cases}\]
\noindent give on $\C^3$ two structures of real vector bundle. Consequently, $\ell_{\R^3}=\pi_{\R^3}\circ \jmath$ 
and $\ell_{\R^{1,2}}=\pi_{\R^{1,2}}\circ \jmath$ make ${\mathcal U}$ a real vector bundle over $\R^3$ or over $\R^{1,2}$.

\subsubsection{The twistor fibration} 

A 3-dimensional linear subspace ${\mathfrak p}\subset {\mathfrak C}^5$ is said to be {\it parabolic} if 
${\dim}\mathrm{Ker}(g_{\mathfrak C}|_{{\mathfrak p}\times {\mathfrak p}})=2$. 
The totality of all parabolic subspaces, denoted by ${\mathfrak P}^3$, is a
compact complex $3$-fold acted upon transitively both by $\Sp(2,\C)$ and ${\rm Sp}(2)$. 
If $[\xi]\in \CP^3$, then ${\mathfrak p}_{\xi}=\{X\in {\mathfrak C}^5 \mid
{XJ}( \xi\,\T\xi) -  (\xi\, \T\xi) JX=0\}$ belongs to ${\mathfrak P}^3$. 
The map ${\mathfrak p}: \CP^3 \ni \xi\mapsto {\mathfrak p}_{\xi}\in {\mathfrak P}^3$ is an equivariant biholomorphism. 
Let ${S}^4$  be the unit sphere of ${\mathfrak R}^5$. Consider the $\mathrm{SU}(2)$-bundle 
$\pi_{{\mathfrak P}}: {\Sp}(2)\ni A\mapsto [{\mathtt L}(A)_1\wedge {\mathtt L}(A)_2\wedge {\mathtt L}(A)_3]\in {\mathfrak P}^3$.
By construction, the map ${\rm Sp}(2)\ni A\mapsto {\mathtt L}(A)_3\in S^4$ is constant along the fibers of $\pi_{{\mathfrak P}}$,
and hence it descends to a map ${\mathfrak w}: {\mathfrak P}^3\to {S}^4$ which, upon the identification of 
$\CP^3$ with ${\mathfrak P}^3$, amounts to  the {\it twistor fibration} (cf. \cite{Br-S4}). 

\section{Projective structures}\label{s:proj-struct}

We will briefly recall the notion of a complex projective structure on a Riemann surface. We will adapt to this specific 
case the general definition given by S. Kobayashi in \cite{K1}, which goes back to E. Cartan \cite{Ca5}.
 
 \begin{defn}\label{d:prj-str}   
 Let $S$ be a Riemann surface and let $\mathrm{SL}(2,\C)_1$ be the group of upper triangular $2\times 2$ 
 unimodular matrices. 
 A {\it complex projective structure} on $S$ is a holomorphic principal $\mathrm{SL}(2,\C)_1$-bundle 
 $P\to S$, equipped with a holomorphic Cartan connection $\eta$.
\end{defn}

\begin{lemma}\label{l:proj-1}
Let $(P,\eta)$ be a projective structure. About any point $p_0\in S$, there exist a coordinate chart $(U,z)$ 
and a cross section ${\mathfrak p}:U\to P$, such that ${\mathfrak p}^*(\eta)={\bf a}^1_2dz$.
\end{lemma}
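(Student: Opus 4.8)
The plan is to use that, because the base $S$ is one dimensional, the projective Cartan geometry $(P,\eta)$ is automatically flat, and then to straighten $\eta$ to the flat model $\CP^1=\SL(2,\C)/\SL(2,\C)_1$, reading off the normalized section and coordinate from the associated developing map. Two structural facts drive everything. Decompose $\sl(2,\C)=\sl(2,\C)_1\oplus\C\,\mathbf{a}_1^2$, where $\sl(2,\C)_1$ is the Lie algebra of the structure group and $\C\,\mathbf{a}_1^2$ is a complement. By the axioms of a Cartan connection, $\eta_p:T_pP\to\sl(2,\C)$ is an isomorphism carrying the vertical space $V_p$ onto $\sl(2,\C)_1$; hence $\eta^{-1}(\sl(2,\C)_1)=V_p$, so the $\mathbf{a}_1^2$-component $\omega$ of $\eta$ is semibasic and is nonzero on every nonvertical vector. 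In particular, for any local holomorphic section $\sigma$ the pullback $\sigma^*\omega$ is a nowhere-vanishing holomorphic $1$-form.

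Next I would establish flatness. The curvature $K=d\eta+\tfrac12[\eta,\eta]$ is horizontal, that is $\iota_{\zeta_A}K=0$ for every fundamental field $\zeta_A$ with $A\in\sl(2,\C)_1$; this follows from $\eta(\zeta_A)=A$ together with the equivariance relation $\mathcal L_{\zeta_A}\eta=-[A,\eta]$. A horizontal $2$-form on $P$ lies in $\bigwedge^2\pi^*T^*S$, which vanishes because $S$ is one dimensional. Hence $K=0$ and $\eta$ satisfies the Maurer--Cartan equation $d\eta+\eta\wedge\eta=0$. Since $\eta$ is, moreover, an $\sl(2,\C)$-valued coframe, the holomorphic Frobenius (Darboux) theorem provides, on a neighborhood $W\subset P$ of a chosen point over $p_0$, a local biholomorphism $\Phi:W\to\SL(2,\C)$ with $\eta=\Phi^*\theta$, where $\theta$ is the Maurer--Cartan form of $\SL(2,\C)$. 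As $\Phi$ is unique up to left translation and intertwines the right $\SL(2,\C)_1$-actions, the composite $[\,\cdot\,]\circ\Phi$ factors as $\mathrm{dev}\circ\pi$ for a developing map $\mathrm{dev}:\pi(W)\to\CP^1$; nondegeneracy of $\omega$ makes $\mathrm{dev}$ a local biholomorphism onto an open subset of $\CP^1$. Normalizing by a left translation, I may assume $\mathrm{dev}(p_0)$ is the origin of an affine chart and take $z=\mathrm{dev}$ as a holomorphic coordinate near $p_0$.

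It remains to exhibit the section. Over this affine chart the unipotent curve $s(\zeta)=\exp(\zeta\,\mathbf{a}_1^2)$ is a holomorphic section of $\SL(2,\C)\to\CP^1$ with $[s(\zeta)]=\zeta$ and $s^*\theta=\mathbf{a}_1^2\,d\zeta$. Shrinking $U$ so that $s(z(U))\subset\Phi(W)$, I set $\mathfrak p=\Phi^{-1}\circ(s\circ z):U\to P$. Using $[\,\cdot\,]\circ\Phi=\mathrm{dev}\circ\pi$ and the injectivity of $\mathrm{dev}$, one checks that $\pi\circ\mathfrak p=\mathrm{id}_U$, so $\mathfrak p$ is a genuine cross section, and then
\[
\mathfrak p^*\eta=(\Phi\circ\mathfrak p)^*\theta=(s\circ z)^*\theta=\mathbf{a}_1^2\,dz,
\]
which is the asserted normal form.

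I expect the main obstacle to be the integration step: straightening $\eta$ to the model (equivalently, producing the developing map $\mathrm{dev}$) and, above all, verifying that $\mathfrak p=\Phi^{-1}\circ(s\circ z)$ is an honest section and not merely a horizontal lift. An equivalent but more computational route, closer to the moving-frames method, bypasses $\Phi$: starting from $\sigma^*\eta$ one kills its $\sl(2,\C)_1$-valued part by a gauge change $\sigma\mapsto\sigma g$ with $g:U\to\SL(2,\C)_1$; writing out $\mathrm{Ad}(g^{-1})\sigma^*\eta+g^{-1}dg$ turns this into a first-order holomorphic ODE system for the entries of $g$, solvable near $p_0$ by the holomorphic Cauchy--Kovalevskaya theorem, after which the surviving semibasic form $\lambda^{-2}\,\sigma^*\omega$ is brought to $dz$ by taking a primitive. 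In either presentation, it is precisely the flatness $K=0$ that guarantees the normalization of the $\sl(2,\C)_1$-part and the normalization of the soldering form $\omega$ are mutually compatible.
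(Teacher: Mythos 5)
Your argument is correct, but it takes a genuinely different route from the paper's. The paper works entirely downstairs: starting from an arbitrary local section $\mathfrak q$, it first chooses the coordinate $w$ so that the component of $\mathfrak q^*(\eta)$ transverse to $\mathfrak{sl}(2,\C)_1$ is exactly $dw$ (using, as you do, that this component is a nowhere-vanishing semibasic form), and then kills the remaining $\mathfrak{sl}(2,\C)_1$-part by the explicit gauge change $\mathfrak p=\mathfrak q\cdot(e^g{\bf a}_1^1+e^{-g}{\bf a}_2^2+b\,{\bf a}_1^2)$, where $g$ solves a second-order holomorphic ODE and the coordinate is simultaneously rescaled by $dz=e^{2g}dw$; the only analytic input is local existence for holomorphic ODE. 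This is essentially the ``more computational route'' you sketch at the end. Your main argument instead observes that every holomorphic Cartan geometry over a one-dimensional base is flat (the curvature is horizontal, hence factors through $\Lambda^2$ of a rank-one quotient), integrates $\eta$ by the holomorphic Maurer--Cartan/Darboux theorem to a developing map into $\SL(2,\C)/\SL(2,\C)_1\cong\CP^1$, and pulls back the unipotent section of the model. What your route buys is conceptual economy: adapted coordinates are identified as developing maps, so Lemma~\ref{l:proj-2} (transitions of adapted charts are M\"obius) follows at once from uniqueness of $\Phi$ up to left translation, whereas the paper proves it by a second computation; what the paper's route buys is an explicit flat section with no appeal to the model geometry. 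Two points in your write-up deserve tightening: the normalizing left translation should be $L_{\Phi(w_0)^{-1}}$ for $w_0\in\pi^{-1}(p_0)\cap W$, so that $s(0)=I$ actually lies in $\Phi(W)$ and $\Phi^{-1}\circ s\circ z$ is defined near $p_0$; and the descent of $[\,\cdot\,]\circ\Phi$ through $\pi$ is better justified by noting that $\Phi$ carries the integral manifolds of $\eta^{-1}(\mathfrak{sl}(2,\C)_1)$ (the connected fibre components of $\pi|_W$) into left cosets of $\SL(2,\C)_1$, since $\Phi$ itself intertwines the right actions only up to a left translation.
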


\begin{proof} 
Let ${\mathfrak q}:V\to P$ be a section of $P$, defined on a neighborhood of $p_0$. Possibly shrinking $V$, 
there is a coordinate $w:V\to \C$, such that
${\mathfrak q}^*(\eta)=(x^1_1({\bf a}_1^1-{\bf a}_2^2)+{\bf a}_2^1+x^1_2{\bf a}_1^2)dw$.  
By the existence and uniqueness theorem for holomorphic ODE (see for instance \cite[p. 46]{H}), there exists a 
holomorphic function $g:U\to \C$, defined on a smaller neighborhood  $U\subset V$ of $p_0$, such that 
$g''-g^2+(x^1_1)'+(x^1_1)^2+x^1_1=0$, where $h'$ denotes the derivative of a function $h$ with respect to $dw$. 
Let $b=e^{-g}(x^1_1+g')$ and define $z:U\to \C$ by $dz=e^{2g}dw$. Possibly shrinking $U$, $z$ is a coordinate. Put 
${\mathfrak p}={\mathfrak q}\cdot (e^g{\bf a}_1^1+e^{-g}{\bf a}_2^2+ b{\bf a}_1^2)$. Then, $(U,z)$ and ${\mathfrak p}$ 
satisfy the required condition.
\end{proof}

\begin{defn} 
A chart $(U,z)$ is said to be {\it adapted} to $(P,\eta)$ if there exists a cross section ${\mathfrak p}:U\to P$, 
such that ${\mathfrak p}^*(\eta)={\bf a}_2^1 dz$.  We call ${\mathfrak p}$ a {\it flat section} of $P$. By Lemma \ref{l:proj-1},  
$S$ can be covered by
an atlas $\mathcal P$ of adapted charts.  An atlas on $S$ is said to be {\it projective} if its transition functions 
are restrictions of M\"obius transformations.
\end{defn}

\begin{lemma}\label{l:proj-2}
The atlas of the charts adapted to  $(P,\eta)$ is projective.
\end{lemma}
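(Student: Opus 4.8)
The plan is to compare two overlapping adapted charts via the gauge transformation relating their flat sections, and to read the coordinate transition rule directly off the normalization $\mathfrak{p}^*(\eta)={\bf a}_2^1\,dz$.

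Let $(U,z)$ and $(\widetilde U,\widetilde z)$ be adapted charts with flat sections $\mathfrak{p}$ and $\widetilde{\mathfrak{p}}$, and suppose $U\cap\widetilde U\neq\emptyset$. Since $\mathfrak{p}$ and $\widetilde{\mathfrak{p}}$ are holomorphic sections of the same principal $\mathrm{SL}(2,\C)_1$-bundle $P$, on the overlap they differ by a unique holomorphic gauge transformation $\widetilde{\mathfrak{p}}=\mathfrak{p}\cdot g$, with $g=\left(\begin{smallmatrix} a & b\\ 0 & 1/a\end{smallmatrix}\right):U\cap\widetilde U\to\mathrm{SL}(2,\C)_1$ and $a$ nowhere zero. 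I would then invoke the change-of-section law for the Cartan connection,
\[
\widetilde{\mathfrak{p}}^*(\eta)=\mathrm{Ad}(g^{-1})\,\mathfrak{p}^*(\eta)+g^{-1}dg,
\]
insert $\mathfrak{p}^*(\eta)={\bf a}_2^1\,dz$ and $\widetilde{\mathfrak{p}}^*(\eta)={\bf a}_2^1\,d\widetilde z$, and equate the two $\mathfrak{sl}(2,\C)$-valued $1$-forms component by component.

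The lower-left entry yields the conformal factor $d\widetilde z=a^2\,dz$, and the diagonal entry yields $da=a^2b\,dz$. Substituting the latter into the upper-right entry produces the decisive cancellation $db=0$, so that $b$ is locally constant. This is the step I expect to carry all the content: the vanishing of $db$ is exactly the vanishing of the Schwarzian derivative of $\widetilde z$ with respect to $z$, the analytic hallmark of a fractional-linear change of coordinate.

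It then remains to integrate the reduced system. With $b$ constant, $da=a^2b\,dz$ gives $a=1/(\gamma-bz)$ for a constant $\gamma$, whence $d\widetilde z=a^2\,dz=dz/(\gamma-bz)^2$ integrates to $\widetilde z=\dfrac{pz+q}{rz+s}$ with $ps-qr=b^2\neq0$; the degenerate case $b=0$ forces $a$ constant and gives an affine, hence Möbius, transition. Thus every transition function of the atlas of adapted charts is the restriction of a Möbius transformation, so the atlas is projective.
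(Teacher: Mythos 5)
Your proof is correct and takes essentially the same route as the paper's: both compare the two flat sections by an $\mathrm{SL}(2,\C)_1$-valued gauge transformation, apply the change-of-section law for the Cartan connection, and integrate the resulting system (the paper parametrizes the diagonal entry as $e^{g}$ and derives $g''=(g')^2$, which is equivalent to your cleaner $db=0$) to conclude that the transition is a M\"obius transformation. The only differences are cosmetic choices of parametrization and integration constants.
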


\begin{proof}  
It suffices to prove that the transition function between two adapted local coordinates $z$ and $w$, defined on the same simply 
connected open neighborhood $U$, is the restriction of a M\"obius transformation.  Let ${\mathfrak p}, {\mathfrak q}:U\to P$ 
be the cross sections such that ${\mathfrak p}^*(\eta)={\bf a}_2^1 dz$ and ${\mathfrak q}^*(\eta)={\bf a}_2^1 dw$. Consider $x=e^g {\bf a}_1^1+e^{-g}{\bf a}_2^2+b {\bf a}_1^2 : U \to {\rm SL}(2,\C)_1$, such that ${\mathfrak q}={\mathfrak p}\cdot x$. Then,
$$
{\bf a}_2^1dw=e^{2g}dz \,{\bf a}^1_2+(dg-e^gbdz){\bf a}^1_1-(dg-e^gbdz){\bf a}^2_2+((e^{-g}(db+bdg)-b^2dz){\bf a}^2_1.$$
This implies $bdz=e^{-g}dg$ and $d(dg/dz)dz-(dg)^2=0$. From the second equation, it follows that $g=c_2-\log(z+c_1)$, 
where $c_1,c_2$ are two constants of integration. 
Then, $dw=e^{2g}dz$ yields  $w=(z+c_1)^{-1}(c_3z+(c_1c_3-e^{2c_2}))$, for some $ c_3\in \C$. This concludes the proof.
\end{proof}
 
 \begin{remark} 
In the literature \cite{LP2009,OvTa2005}, a projective structure on a Riemann surface $S$ is often defined in terms of 
a projective atlas. 
It is not difficult to prove  that every projective atlas is 
originated by a projective structure in the sense of Definition \ref{d:prj-str}. 
\end{remark}
 
\noindent  Let $(P,\eta)$ be a projective structure on $S$ and let ${\mathcal P}$ be its the projective atlas. 
Let ${\mathcal M}^q$ be the sheaf of meromorphic differentials of order $q$. For each $(U,z)\in {\mathcal P}$,
we define\footnote{We implicitly assume that ${\mathfrak d}_ {(U,z)}(0)=0$.} 
${\mathfrak d}_{(U,z)} : {\mathcal M}^4|_U\to {\mathcal M}^2|_U$ by
 \begin{equation}\label{secondorderoperator}
  {\mathfrak d}_ {(U,z)}({\mathrm Z} dz^4)=\big(\frac{1}{2}\frac{{\mathrm Z}''}{{\mathrm Z}}-\frac{9}{16}\frac{{\mathrm Z}'^2}{{\mathrm Z}^2}\big)dz^2
   \end{equation}
 
 \begin{lemma}\label{l:proj-3}
 The second order operator ${\mathfrak d}_ {(U,z)}$ is independent of the choice of the projective chart.
 \end{lemma}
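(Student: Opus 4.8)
The plan is to show that, under a change between two adapted charts, the operator $\mathfrak d_{(U,z)}$ picks up an additive anomaly proportional to the Schwarzian derivative of the transition function, and that this anomaly vanishes precisely because the transition functions of the projective atlas are M\"obius (Lemma \ref{l:proj-2}). The computation is local on the open set where the quartic coefficient is holomorphic and nonvanishing; by the convention $\mathfrak d_{(U,z)}(0)=0$ and meromorphy, the resulting identity of differentials propagates over all of $U$.

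First I would put the operator in logarithmic form. On the locus $Z\neq 0$, setting $L=\log Z$ and substituting $Z'/Z=L'$, $Z''/Z=L''+(L')^2$ gives
\begin{equation*}
\frac{1}{2}\frac{Z''}{Z}-\frac{9}{16}\frac{Z'^2}{Z^2}=\frac{1}{2}L''-\frac{1}{16}(L')^2,
\end{equation*}
which is the form best suited to tracking the coordinate dependence.

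Next, let $(U,z)$ and $(U,w)$ be two adapted charts on a common simply connected domain and write $\delta=Z\,dz^4=\hat Z\,dw^4$, so that $Z=\hat Z\,(w')^4$ with $w'=dw/dz$, and hence $L=\hat L\circ w+4\log w'$, where $\hat L=\log\hat Z$. I would then apply the chain rule and collect terms. The contributions involving $\hat L$ reassemble, after a cross term cancels against $\tfrac12\hat L'w''$, into $(w')^2\big(\tfrac12\hat L''-\tfrac1{16}(\hat L')^2\big)$, that is, $(w')^2$ times the operator computed in the $w$-chart; the contributions coming from $4\log w'$ combine into $2\big(\tfrac{w'''}{w'}-\tfrac32\tfrac{(w'')^2}{(w')^2}\big)$. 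In other words,
\begin{equation*}
\Big(\frac{1}{2}\frac{Z''}{Z}-\frac{9}{16}\frac{Z'^2}{Z^2}\Big)=(w')^2\Big(\frac{1}{2}\frac{\hat Z''}{\hat Z}-\frac{9}{16}\frac{\hat Z'^2}{\hat Z^2}\Big)+2\,S(w;z),
\end{equation*}
where $S(w;z)=\tfrac{w'''}{w'}-\tfrac32(\tfrac{w''}{w'})^2$ is the Schwarzian derivative of $w$ as a function of $z$. This chain-rule computation is the only real work; the main point to get right is the bookkeeping that isolates the Schwarzian.

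Finally, by Lemma \ref{l:proj-2} the transition $w=w(z)$ between two adapted charts is the restriction of a M\"obius transformation, whose Schwarzian derivative vanishes identically. Thus the anomaly term drops out, leaving $\mathfrak d(Z)=(w')^2\hat{\mathfrak d}(\hat Z)$; multiplying by $dz^2$ and using $(w')^2\,dz^2=dw^2$ shows that the quadratic differentials $\mathfrak d_{(U,z)}(\delta)$ and $\mathfrak d_{(U,w)}(\delta)$ coincide. The sole conceptual input is the vanishing of the Schwarzian for M\"obius maps, which is exactly what the projective structure supplies.
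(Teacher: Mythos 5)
Your proof is correct and follows essentially the same route as the paper: you establish the transformation rule $\mathfrak d_{(U,z)}=\mathfrak d_{(U,\tilde z)}+2\,\mathcal S_z(h)\,dz^2$ and then invoke the vanishing of the Schwarzian for M\"obius transition functions, which is exactly the paper's argument (equation \eqref{ftrans} plus Lemma \ref{l:proj-2}). The only difference is that you carry out explicitly, via the logarithmic substitution $L=\log Z$, the computation the paper dismisses as ``a computational matter,'' and your bookkeeping checks out.
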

 
 \begin{proof} 
 Let ${\tilde z}$ and $z$ be two coordinates on the neighborhood $U$, with transition function $h={\tilde z}\circ z^{-1}$,
  and let ${\mathfrak d}_ {(U,z)}$,  ${\mathfrak d}_ {(U,{\tilde z})}$ be defined as in \eqref{secondorderoperator}.  
  It is a computational matter to check that
         \begin{equation}\label{ftrans}
    {\mathfrak d}_ {(U,z)}={\mathfrak d}_ {(U,{\tilde z})}+2\,{\mathcal{S}_z(h) dz^2},
       \end{equation} 
 where 
$\mathcal{S}_z(h) = (\frac{h''}{h'})' -\frac12(\frac{h''}{h'})^2$
 is the Schwarzian derivative of $h$ with respect to $z$. If both charts are projective, 
 then $h$ is a M\"obius transformation, which implies
 $\mathcal{S}_z(h)=0$, and hence the result.\end{proof}
 
 \begin{defn} 
 Let $(P,\eta)$ be a projective structure on $S$ and let ${\mathcal P}$ be its projective atlas. Then, there exists  
 a second order differential operator ${\mathfrak d} : {\mathcal M}^4\to {\mathcal M}^2$, such 
 that ${\mathfrak d}|_U={\mathfrak d}_{(U,z)}$, for every $(U,z)\in {\mathcal P}$.
\end{defn}


To apply projective structures in the study of isotropic curves we need a slightly more general notion.

\begin{defn}
Let ${\rm D}\subset S$ be a discrete set. A {\it meromorphic projective structure} on $S$ is a projective structure 
$(P,\eta)$ on $S\setminus D$, satisfying the following condition:  for every $p_0\in D$, there exist an open 
neighborhood $U$, with $U\cap D=\{p_0\}$, and a section ${\mathfrak p}:U\setminus \{p_0\}\to P$, such 
that ${\mathfrak p}^*(\eta)$ is meromorphic on $U$. A point $p_0\in D$ is a {\it removable singularity}
if $P$ and $\eta$ can be extended across $p_0$. If the points of $D$ are not removable singularities, 
we say that $D$ is the \textit{singular locus} of $(P,\eta)$.
We can use \eqref{ftrans} to define the 
operator ${\mathfrak d} : {\mathcal M}^4\to {\mathcal M}^2$ also for meromorphic projective structures.
\end{defn}

\section{Isotropic curves in $\Q_3$}\label{s: isotropic curves}

 \subsection{Isotropic curves and Legendre associates}  
A nonconstant holomorphic map $f : S \to \Q_3$  from a connected Riemann surface $S$ into the complex 
quadric $\Q_3$  is said to be an \textit{isotropic curve} if $f^*({\bf g})=0$.  Given $[\xi]\in \CP^3$, the pencil 
$\{P\in \Q_3\ \mid [\xi]\subset P\}$
 is a complex null geodesic \cite{LeBrun82}, referred to as a {\it null ray}. We will only consider isotropic curves 
 which are not null rays.

Let $f$ be an isotropic curve and $p_0$ a point of $S$. Let $(U,z)$ be a complex chart centered at $p_0$ and
${\bf u}_1$, ${\bf u}_2:U\to \C^4$ be two maps, such that  $f|_U=[{\bf u}_1\wedge {\bf u}_2]$. Let ${\mathfrak s}(2,\C)$ 
denote the space of symmetric $2\times 2$ 
 matrices and consider the nonconstant map ${\dot{\bf m}}=(\dot{m}_{ij}):U\to {\mathfrak s}(2,\C)$, 
 defined by $\dot{m}_{ij}=\omega({\bf u}_i,{\bf u}_j')$. 
 Notice that $f$ is isotropic if and only if  ${\det}(\dot{\bf m})=0$. Possibly shrinking $U$, there exist a nowhere zero map 
 $\dot{\mathtt m}:U\to  {\mathfrak s}(2,\C)$ and a nonnegative integer $k_1$, such that $\dot{\bf m}=z^{k_1}\dot{\mathtt m}$. 
The integer $k_1$ is the ramification index of $f$ at $p_0$. Possibly switching ${\bf u}_1$ and ${\bf u}_2$ and shrinking $U$, 
we may assume that $\dot{\mathtt m}_{22}$ is nowhere zero. Then, $[-\dot{\mathtt m}_{22}{\bf u}_1+\dot{\mathtt m}_{12}{\bf u}_2]:U\to \CP^3$ 
does not depend on the choice of ${\bf u}_1$ and ${\bf u}_2$. Thus there exists a holomorphic map $f^{\sharp} : S\to \CP^3$, 
such that $f^{\sharp}|_U=[-\dot{\mathtt m}_{22}{\bf u}_1+\dot{\mathtt m}_{12}{\bf u}_2]$. By construction, $f^{\sharp}$ is a Legendre map and $f^{\sharp}(S)$ is not contained in any contact line of $\CP^3$.  We call $f^{\sharp}$ the {\it Legendre associate} of $f$. 
The divisors of the critical points of $f$ and $f^{\sharp}$ will be denoted by $\Delta_f$ and $\Delta_{f^\sharp}$, respectively.
 
 \begin{remark} 
 Conversely, if $f^{\sharp}:S\to \CP^3$ is a Legendrian curve not contained in a contact line, 
 then $f=[f^{\sharp}	\wedge 	f^{\sharp}\,']:S\to \Q_3$ is an isotropic curve which is not contained in any isotropic ray. 
 This and the result of Bryant \cite[Theorem G]{Br-S4},
asserting that any compact connected Riemann surface can be holomorphically
embedded in $\CP^3$ as a Legendrian curve, imply that for every compact Riemann surfaces 
$S$ there exists a generically one-to-one isotropic curve $f : S \to \Q_3$. 
\end{remark}

 \subsection{Isotropic curves and classical surface theory}  
For a given isotropic curve $f:S\to \Q_3$, consider  the discrete subsets ${\rm E}_f=f(S)\cap {\mathcal A}$, 
$\widetilde{{\rm E}}_f=f(S)\cap {\mathcal B}$, and ${\rm E}^{\sharp}_f=f^{\sharp}(S)\cap {\mathcal Q}$. 
If $S$ is compact, these are finite subsets. 
%
%
Let $S_{f}=S\setminus {\rm E}_f$, 
 $\widetilde{S}_f=S\setminus \widetilde{{\rm E}}_f$, 
 and $S^{\sharp}_f=S\setminus {\rm E}^{\sharp}_f$.  The following theorem is a compendium of results ranging from the 19th 
 century through the first decades of this century  \cite{W,Br1987,Br-S4,Ch-S4,KO1983,KUY-O,Lee2005,GMM-MA}.

\begin{thmx}\label{thm:associatedsurfaces}
Let $f:S\to \Q_3$ be an isotropic curve. Then,
\begin{enumerate}

\item  $\phi_{\R^3}:=\ell_{\R^3}\circ f :S_f\to \R^3$ is  a conformal, branched minimal immersion. 
The points of  ${\rm E}_f$ are the ends of $\phi_{\R^3}$.

\item $\phi_{\R^{1,2}}:=\ell_{\R^{1,2}}\circ f : S_f\to \R^{2,1}$ is a conformal, branched maximal immersion. 
The points of  ${\rm E}_f$ are the ends of $\phi_{\R^3}$.

\item $\widetilde{\phi}_{{\mathcal H}^3}:=\tilde{\ell}_{{\mathcal H}^3}\circ f: \widetilde{S}_f\to {\mathcal H}^3$ is a conformal, 
branched CMC 1 immersion.
The points of $\widetilde{{\rm E}}_f$  are the ends of $\widetilde{\phi}_{{\mathcal H}^3}$.

\item  $\widetilde{\phi}_{{\mathcal H}^{1,2}}:=\tilde{\ell}_{{\mathcal H}^{1,2}}\circ f :  \widetilde{S}_f\to {\mathcal H}^{1,2}$ 
is a conformal, branched spacelike CMC 1 immersion. The points of $\widetilde{{\rm E}}_f$  
are the ends of $\widetilde{\phi}_{{\mathcal H}^{1,2}}$.

\item  $\phi^{\sharp}_{{\mathcal H}^3}:=\ell^{\sharp}_{{\mathcal H}^{3}}\circ f^{\sharp}  : S^{\sharp}_f\to {\mathcal H}^{3}$ is a  flat front. The points of ${\rm E}^\sharp_f$ are the ends of $\phi^{\sharp}_{{\mathcal H}^3}$.

\item  $\phi^{\sharp}_{{\mathcal H}^{1,2}}:=\ell^{\sharp}_{{\mathcal H}^{1,2}}\circ f^\sharp  : {S}^\sharp_f\to {\mathcal H}^{1,2}$ is a spacelike flat front.
The points of ${\rm E}^\sharp_f$ are the ends of $\phi^{\sharp}_{{\mathcal H}^{1,2}}$.

\item $\phi^{\sharp}_{S^4}:={\mathfrak w}\circ f^{\sharp}:S\to S^4$ is a branched superminimal immersion.
\end{enumerate}
The listed maps are said to be the {\em branched (or frontal) immersions  tamed} by $f$.
\end{thmx}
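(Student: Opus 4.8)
The plan is to reduce each of the seven assertions to a classical Weierstrass-type representation, transporting the isotropy of $f$ through the explicit conformal and contact charts of Section~\ref{s:pre}. The common mechanism is that $\jmath$, $\tilde{\jmath}$ and $\jmath^{\sharp}$ are conformal (respectively contact) biholomorphisms, so that $f^*(\mathbf{g})=0$ forces $\jmath\circ f$ and $\tilde{\jmath}\circ f$ to be holomorphic \emph{null} curves (for $g_{\C^3}$ and $\widehat{g}$ respectively), while the Legendre property of $f^{\sharp}$ forces $\jmath^{\sharp}\circ f^{\sharp}$ to be a holomorphic \emph{Legendrian} curve in $\mathrm{PSL}(2,\C)$. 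Each surface class is then recognized by its known representation, and the only genuinely analytic content is the identification of the branch points with the ramification divisors and of the excluded sets with the ends.

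For (1) and (2) I would work on $S_f=S\setminus {\rm E}_f$, where $f$ avoids $\mathcal{A}$ and $F:=\jmath\circ f\colon S_f\to\C^3$ is a well-defined holomorphic curve, null for $g_{\C^3}=dw^1dw^3-(dw^2)^2$ because $\jmath$ is conformal and $f^*(\mathbf{g})=0$. The classical Weierstrass--Enneper construction (cf.\ \cite{W}) then exhibits $\phi_{\R^3}=\pi_{\R^3}\circ F$ as a conformal branched minimal immersion whose branch points are the zeros of $dF$, i.e.\ the ramification points of $f$; the Lorentzian real form $\phi_{\R^{1,2}}=\pi_{\R^{1,2}}\circ F$ is a conformal branched maximal immersion by the analogous representation (cf.\ \cite{KO1983,Lee2005,Palmer1990}). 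Since $\jmath$ has a pole exactly along $\mathcal{A}$, the points of ${\rm E}_f$ are the ends.

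Cases (3)--(6) run in parallel through the unimodular charts. On $\widetilde{S}_f=S\setminus\widetilde{{\rm E}}_f$ the map $\tilde{\jmath}\circ f\colon\widetilde{S}_f\to\SL(2,\C)$ is a holomorphic null curve for $\widehat{g}$, and Bryant's representation of CMC~$1$ surfaces in ${\mathcal H}^3$ (cf.\ \cite{Br1987}), composed with $\pi_{{\mathcal H}^3}$, yields (3); composing instead with $\pi_{{\mathcal H}^{1,2}}$ and invoking its de~Sitter analogue (cf.\ \cite{Lee2005,Palmer1990}) yields (4). For (5) and (6) I would pass to the Legendre associate: on $S^{\sharp}_f=S\setminus{\rm E}^{\sharp}_f$ the curve $\jmath^{\sharp}\circ f^{\sharp}\colon S^{\sharp}_f\to\mathrm{PSL}(2,\C)$ is holomorphic Legendrian, and the representation of flat fronts in ${\mathcal H}^3$ and of spacelike flat fronts in ${\mathcal H}^{1,2}$ by holomorphic Legendrian curves (cf.\ \cite{GMM-MA,KUY-O}) identifies $\phi^{\sharp}_{{\mathcal H}^3}$ and $\phi^{\sharp}_{{\mathcal H}^{1,2}}$, with ends along ${\rm E}^{\sharp}_f$.

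Assertion (7) follows because, by construction, $f^{\sharp}$ is a holomorphic horizontal (Legendrian) curve in $\CP^3$; the twistor construction of Chern and Bryant (cf.\ \cite{Ch-S4,Br-S4}) then makes $\mathfrak{w}\circ f^{\sharp}$ a branched superminimal immersion into $S^4$. I expect the main obstacle to be the uniform bookkeeping near the exceptional sets: one must check, from the pulled-back Maurer--Cartan data $(\varphi^3_1,\varphi^4_1,\varphi^4_2)$, that the induced metrics are conformal and that the branch locus coincides with the ramification divisor of $f$ (or of $f^{\sharp}$), while verifying that ${\rm E}_f$, $\widetilde{{\rm E}}_f$ and ${\rm E}^{\sharp}_f$ are genuine ends---poles of the relevant chart---rather than removable or interior branch points. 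Organizing this uniformly through the moving-frame reduction is what lets the seven cases collapse into a single verification.
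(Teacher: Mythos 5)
Your proposal is correct and follows essentially the same route as the paper, which offers no written proof of Theorem~\ref{thm:associatedsurfaces} but presents it explicitly as a compendium of the cited classical results (Weierstrass, Kobayashi, Bryant, Lee, Palmer, G\'alvez--Mart\'{\i}nez--Mil\'an, Kokubu--Umehara--Yamada, Chern); your outline simply makes explicit the intended mechanism, namely transporting isotropy through the conformal and contact charts of Section~\ref{s:pre} to reach the null or Legendrian curves that feed the known representation formulae.
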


\begin{remark}
According to  \cite[Theorem F]{Br-S4}, a Legendre curve from a Riemann surface $S$ to $\CP^3$ 
not contained in a contact line can be written in the form
\begin{equation}\label{brrep}
   f^{\sharp} =[(2g',2gg',2hg'-gh',h')],
      \end{equation}
where $g$ and $h$ are meromorphic functions with $g$ nonconstant. In view of Theorem \ref{thm:associatedsurfaces}, one can 
derive explicit representation formulae for the branched (frontal) immersions tamed by $f$ in terms of the meromorphic 
functions $g$ and $h$ 
(see for instance  \cite{Small1994,KUY-O,ET,GMM-MA,MN-Lincei}). 
As an example, in \cite{KUY-O}, the Bryant representation formula for Legendrian curves in $\CP^3$ was implicitly used to 
find flat fronts in ${\mathcal H}^3$ with $n$ smooth ends (see Figure \ref{FIGURA-1}). The generating meromorphic functions 
are $g_n(z)=z^{-1}(z^n-1)^{(2-n)/n}$ and $f_n(z)=z^{-2}(1+z^n)(z^n-1)^{(2-n)/n}$, $n\in {\mathbb N}$. 
They are defined on a covering of $\C$ punctured at the $n$th roots of the unity.
 \end{remark}

 \begin{figure}[h]
\begin{center}
\includegraphics[height=5.6cm,width=11.2cm]{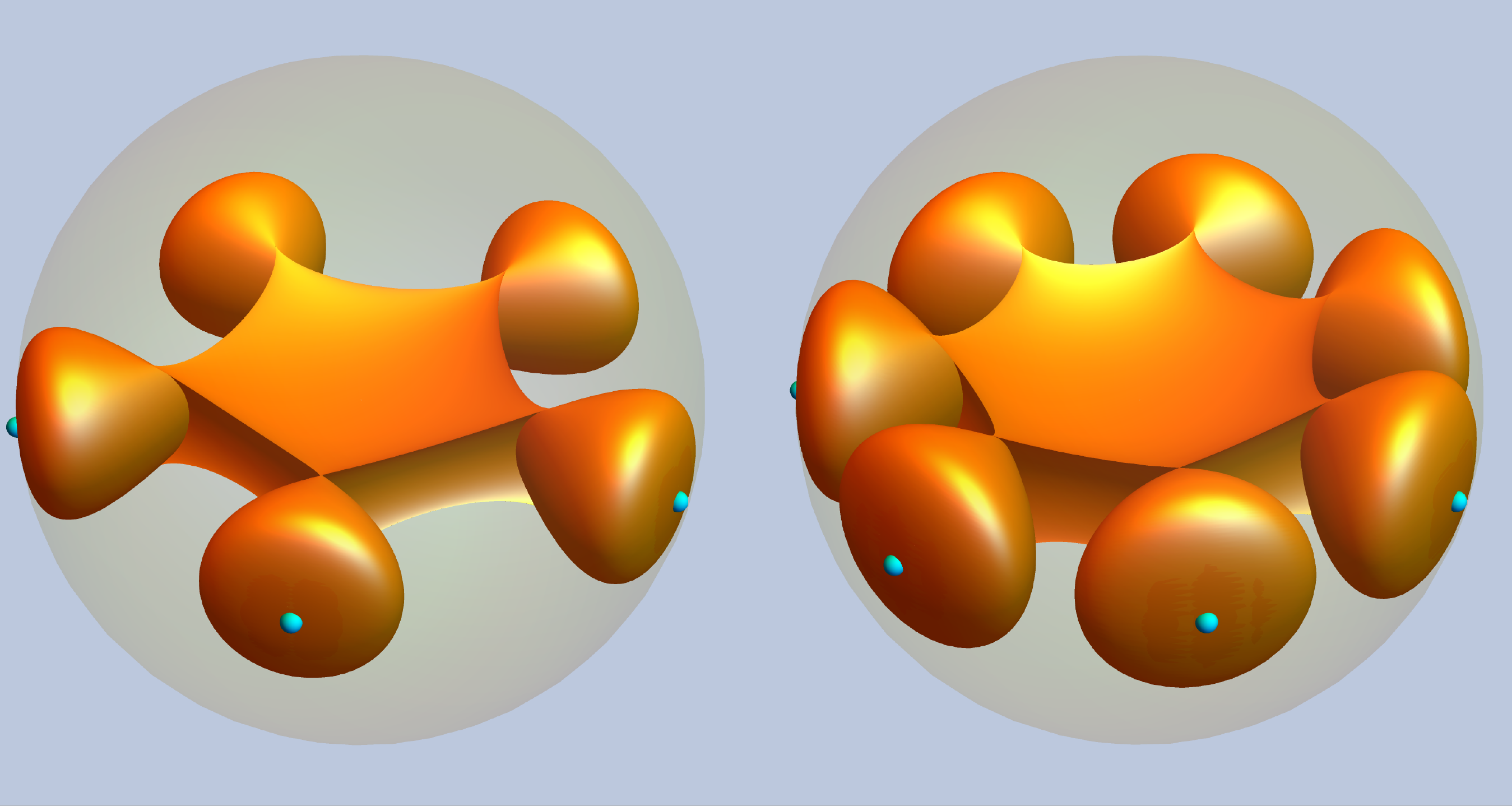}
\caption{\small{Flat fronts in hyperbolic 3-space with 5 and 7 ends, with rotational symmetries of order 5 and 7, 
originated by the meromorphic functions $(g_5,h_5)$ and $(g_7,h_7)$, respectively.}}\label{FIGURA-1}
\end{center}
\end{figure}

\begin{defn} 
Let $\phi$ and $\hat \phi$ be tamed  by  $f$ and  $\hat f$, respectively. We say that $\phi$ is a {\it conformal
Goursat transform} of $\hat \phi$
if there exists $A\in \Sp(2,\C)$, such that $f(S)=A\cdot \hat f(\hat S)$. 
\end{defn}

\begin{remark} 
The (\textit{classical}) \textit{Goursat transformation} 
was originally introduced for minimal surfaces in Euclidean space by Goursat \cite{Go,JMN}.
The classical definition can be rephrased as follows.
\vskip0.1cm

\noindent $\bullet$   Let $\phi$ and $\hat \phi$ be two branched minimal immersions in $\R^3$ tamed by $f$ and $\hat f$,
respectively; $\phi$ and $\hat \phi$ are (classical) Goursat transforms of each other if there exists an element $A$ of the subgroup
$\{X\in \Sp(2,\C)\mid X({\bf e}_1\wedge {\bf e}_2)={\bf e}_1\wedge {\bf e}_2,\, X({\bf e}_3\wedge {\bf e}_4)={\bf e}_3\wedge {\bf e}_4\}\cong {\rm SL}(2,\C)$, such that $\hat f=A\cdot f$.
\vskip0.1cm

\noindent This means that the Gauss maps of $\phi$ and $\hat \phi$ differ by a M\"obius transformation of $S^2$. 
Taking this point of view, one can define a Goursat transformation for isothermic surfaces in $\R^3$ \cite{HJH} which 
generalizes the classical one but, excluding the case of minimal surfaces, seems not directly related to our definition. 

A Goursat transformation for CMC 1 surfaces in ${\mathcal H}^3$ has been considered in
\cite{D,RUY,UY-Annals,Bo-Pe2005}. It can be described as follows.
\vskip0.1cm

\noindent $\bullet$  Let $\phi$ and $\hat \phi$ be two CMC 1 immersed surfaces of ${\mathcal H}^3$ tamed by $f$ 
and $\hat f$; $\phi$ and $\hat \phi$ are (\textit{hyperbolic}) {\it Goursat transforms} of each other if there exists an element $A$ of the subgroup
 $\{X\in \Sp(2,\C) \mid  X{\bf e}_1={\bf e}_1,\,  X{\bf e}_3={\bf e}_3,\, X({\bf e}_2\wedge {\bf e}_4)={\bf e}_2\wedge {\bf e}_4\}
\cong {\rm SL}(2,\C)$, such that $\hat f=A\cdot f$.
\vskip0.1cm

\noindent Therefore, our definition is a natural generalization of both the classical and hyperbolic Goursat transformations.

\end{remark}

\subsection{The projective structure of an isotropic curve} 

The zeroth order frame bundle along an isotropic curve $f : S \to \Q_3$ is the principal $\mathrm{G}_0$-bundle
$ \mathcal F_0 \to S$,
\begin{equation}\label{Zeroth}
 \mathcal F_0 = \left\{(p, A) \in S\times \Sp(2,\C) \mid f(p) = \pi(A) =[A_1 \wedge A_2] \right\}.
\end{equation}
The holomorphic sections of $ \mathcal F_0$ are called {\it symplectic frames} along $f$. Let $\mathrm{H}_1$ 
be the 2-dimensional Lie subgroup
$\{X\in {\rm H} \mid  (X {\bf e}_1)\wedge {\bf e}_1=0\}$, isomorphic to  $\mathrm{SL}(2,\C)_1$.  

\begin{thmx}\label{thm:FourthReduction} 
Let $f:S\to \Q_3$ be an isotropic curve and $S_{\star}=S\setminus |\Delta_f|\cup |\Delta_{f^{\sharp}}|$. There exists a 
unique 
reduced bundle $\pi:\mathcal F\to S_{\star}$ of $ \mathcal F_0 |_{S_{\star}}$ with structure group $\mathrm{H}_1$
such that:
\vskip0.1cm
\noindent $\bullet$  $ \mathcal F\ni (p,A)\mapsto A\in \Sp(2,\C)$ is an integral manifold of the Pfaffian differential system given by
\begin{equation}\label{PE4}
 \varphi^3_1=\varphi^4_1=\varphi^2_1-\varphi^4_2 = \varphi^1_1-3\varphi^2_2=4\varphi^1_2-3\varphi_4^2
       =\varphi_4^1=0,\quad \varphi_2^4   \neq 0.
\end{equation}

\vskip0.1cm
\noindent $\bullet$  There exists a meromorphic quartic differential  $\delta$ on $S$ 
 such that ${\pi}^*(\delta)=\varphi^1_3 \, (\varphi^4_2)^3$.

\vskip0.1cm
\noindent $\bullet$  $\mathcal F$ with the $\mathfrak{sl}(2,\C)$-valued $1$-form  $\eta=\varphi^2_2({\bf a}_1^1-{\bf a}_2^2)-6^{-1/3}\varphi^4_2{\bf a}_2^1-(3/32)^{1/3}\varphi^2_4{\bf a}_1^2$
is a meromorphic projective structure with singular locus $D\subseteq |\Delta_f|\cup |\Delta_{f^{\sharp}}| $.

\end{thmx}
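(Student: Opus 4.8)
The plan is to construct $\mathcal{F}$ by the Cartan--Griffiths--Jensen reduction of the structure group of $\mathcal{F}_0$, and then to extract the quartic differential and the projective structure from the residual data. I would first record the structure equations $d\varphi=-\varphi\wedge\varphi$ together with the $\mathfrak{sp}(2,\C)$-symmetries of the entries (in particular $\varphi^3_2=\varphi^4_1$), so that the differential of $f=[A_1\wedge A_2]$ is encoded by the symmetric matrix $\left(\begin{smallmatrix}\varphi^3_1 & \varphi^4_1\\ \varphi^4_1 & \varphi^4_2\end{smallmatrix}\right)$, whose determinant $\varphi^3_1\varphi^4_2-(\varphi^4_1)^2$ represents $f^*(\mathbf{g})$ up to nonvanishing bundle factors. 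Since $f$ is isotropic and not a null ray, on $S_\star$ this matrix has rank exactly one; acting by $\mathrm{G}_0$ on $(A_1,A_2)$ to place its kernel along the first frame direction normalizes $\varphi^3_1=\varphi^4_1=0$ with $\varphi^4_2\neq 0$, which is the canonical first reduction and yields the first two equations of \eqref{PE4}.

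\emph{Prolongation.} I would then differentiate the standing relations with the Maurer--Cartan equations and, at each stage, isolate the newly appearing semibasic component, compute its transformation law under the current residual group, and normalize it, removing one fibre parameter per step. The governing mechanism is that, through the isomorphism $\mathtt{F}$, the diagonal torus acts on $(\mathbf{e}_1,\dots,\mathbf{e}_4)$ with the weights $(3,1,-3,-1)$ of the cubic representation $\mathrm{S}^3(\C^2)$. This assigns weights to the $\varphi^i_j$ (for instance $\varphi^1_2$ and $\varphi^2_4$ both have weight $2$, while $\varphi^2_1$ and $\varphi^4_2$ both have weight $-2$), which is exactly why the admissible normalizations occur in the balanced combinations $\varphi^2_1-\varphi^4_2=0$, $\varphi^1_1-3\varphi^2_2=0$, $4\varphi^1_2-3\varphi^2_4=0$, $\varphi^1_4=0$ of \eqref{PE4}, with the numerical coefficients fixed by the Clebsch--Gordan data of $\mathtt{F}$. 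After these steps the residual group has shrunk to the $2$-dimensional $\mathrm{H}_1\cong\SL(2,\C)_1$, and since every normalization was forced, the subbundle $\mathcal{F}\subset\mathcal{F}_0|_{S_\star}$ cut out by \eqref{PE4} is unique.

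\emph{Quartic differential and projective structure.} The same weight count gives $\varphi^1_3$ weight $6$ and $(\varphi^4_2)^3$ weight $-6$, so $\varphi^1_3(\varphi^4_2)^3$ is torus-invariant; verifying invariance under the unipotent part of $\mathrm{H}_1$ as well shows it is $\mathrm{H}_1$-basic, hence descends to a holomorphic quartic differential on $S_\star$, which the local normal forms of $f$ and $f^\sharp$ near their critical points extend to the meromorphic $\delta$ on $S$. For the last assertion I would transport the reduced Maurer--Cartan form through the monomorphism $\mathtt{S}$ to obtain the stated $\mathfrak{sl}(2,\C)$-valued form $\eta$, whose coefficients $6^{-1/3}$ and $(3/32)^{1/3}$ are precisely those dictated by $\mathtt{F}$, and then check the Cartan connection axioms: $\eta$ restricts on the fibres to the Maurer--Cartan form of $\SL(2,\C)_1$, is $\mathrm{H}_1$-equivariant, and is a pointwise linear isomorphism $T\mathcal{F}\to\mathfrak{sl}(2,\C)$. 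This makes $(P,\eta)=(\mathcal{F},\eta)$ a projective structure, and the same ramification analysis governs its meromorphic extension and places the singular locus inside $|\Delta_f|\cup|\Delta_{f^\sharp}|$.

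\emph{Main obstacle.} The crux is the prolongation bookkeeping: matching each normalization to the precise coefficients appearing in \eqref{PE4} and in $\eta$, confirming that the residual group is exactly $\mathrm{H}_1$ and neither larger nor smaller, and controlling the behaviour of the forms across the critical points of $f$ and $f^\sharp$ so that $\delta$ and $(\mathcal{F},\eta)$ extend meromorphically with singular locus contained in $|\Delta_f|\cup|\Delta_{f^\sharp}|$.
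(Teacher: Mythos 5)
Your outline follows the same route as the paper: a Cartan moving-frame reduction of $\mathcal F_0$ down to structure group $\mathrm{H}_1$, with the quartic differential and the Cartan connection read off from the residual Maurer--Cartan components. Your weight bookkeeping for the $\mathrm{S}^3(\C^2)$-embedding is sound (the weights $(3,1,-3,-1)$ do explain why the normalizations pair $\varphi^2_1$ with $\varphi^4_2$, $\varphi^1_2$ with $\varphi^2_4$, and why $\varphi^1_3(\varphi^4_2)^3$ is torus-invariant), and the first reduction via the rank-one symmetric matrix is exactly right. The paper organizes the work differently --- instead of a step-by-step prolongation it writes down, in Lemma \ref{l:lemma1-thmB}, one explicit gauge transformation $BXY$ achieving all of \eqref{PE4} at once, and in Lemma \ref{l:lemma2-thmB} computes the full stabilizer of the system by successive substitution into $\widetilde\alpha=X^{-1}(\alpha X+dX)$, recovering precisely the defining equations of $\mathtt{S}(\mathrm{SL}(2,\C)_1)$. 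Your uniqueness claim (``every normalization was forced'') would be delivered by carrying the prolongation through, but as written it is asserted rather than proved; the weight argument shows $\mathrm{H}_1$ preserves \eqref{PE4}, not that nothing larger does.

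The genuine gap is the point you yourself flag as the ``main obstacle'' and then leave unresolved: the behaviour at $|\Delta_f|\cup|\Delta_{f^\sharp}|$. Your argument, as it stands, produces a holomorphic quartic differential and a projective structure on $S_\star$ only, whereas the theorem asserts that $\delta$ is meromorphic on all of $S$ and that $(\mathcal F,\eta)$ is a \emph{meromorphic} projective structure with singular locus inside $|\Delta_f|\cup|\Delta_{f^\sharp}|$. This is where most of the paper's work goes: Lemma \ref{l:lemma1-thmB} constructs the adapted lift on a punctured disk around a branch point with pole orders explicitly controlled by the ramification indices $k_1$ of $f$ and $k_2$ of $f^\sharp$ (note that the higher normalizations involve dividing by $a'$, where $a$ vanishes to order $1+k_2$, which is exactly why $|\Delta_{f^\sharp}|$ must be excised and why the resulting sections are meromorphic rather than merely undefined there). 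Without an argument of this kind --- i.e., a local normal form for the adapted frame near the branch points showing the pulled-back forms are meromorphic --- the second and third bullets of the theorem are not established. You should also note that in your first reduction the rank-one condition on $S_\star$ uses the removal of $|\Delta_f|$ (where the matrix vanishes identically), while the removal of $|\Delta_{f^\sharp}|$ is needed only at the later stages; identifying exactly where each normalization degenerates is part of the same missing analysis.
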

\begin{proof}  
For the proof of Theorem \ref{thm:FourthReduction}, we need the following two lemmas.

\begin{lemma}\label{l:lemma1-thmB}
Let $f$ be an isotropic curve defined on an open disk ${\rm D}_{\epsilon}\subset \C$ centered at the origin. Let $k_1$ and $k_2$ be the ramification indices of $f$ and $f^{\sharp}$ at $0$. Possibly shrinking ${\rm D}_{\epsilon}$ there exists
a meromorphic lift $A:{\dot {\rm D}}_{\epsilon}\to \Sp(2,\C)$ of $f$, holomorphic on $\dot{{\rm D}}_{\epsilon}$ 
and  satisfying \eqref{PE4}. If $k_1=k_2=0$, $A$ is holomorphic.
\end{lemma}

\begin{proof}[Proof of Lemma \ref{l:lemma1-thmB}]

Let $({\bf b}_1,\dots, {\bf b}_4)\in \Sp(2,\C)$, so that $f(0)=[{\bf b}_1\wedge {\bf b}_2]$.  Then,
$f=[{\bf u}_1\wedge {\bf u}_2]$, where ${\bf u}_1={\bf b}_1+m_{11}{\bf b}_3+m_{12}{\bf b}_4$ 
and ${\bf u}_2={\bf b}_2+m_{12}{\bf b}_3+m_{22}{\bf b}_4$. 
Let 
$m=m_{11}{\bf a}^1_1+m_{12}({\bf a}^1_2+{\bf a}^2_1)+m_{22}{\bf a}^2_2$ and $k_1$ be the ramification index of $f$ at $0$. 
Then, 
$dm=z^{k_1}\dot{{\mathtt m}}dz$, where  $\dot{{\mathtt m}}:{\rm D}_{\epsilon}\to  {\mathfrak s}(2,\C)$  
satisfies $\dot{{\mathtt m}}(0)\neq 0$ and ${\rm det}(\dot{{\mathtt m}})=0$. 
Without loss of 
generality, we assume that $\dot{{\mathtt m}}_{22}(z)\neq 0$, $\forall z$. Let $X_0\in \Sp(2,\C)$ be given by
%
{$X_0=(-\dot{{\mathtt m}}_{22}{\bf e}_1^1-\dot{{\mathtt m}}_{22}^{-1}{\bf e}_3^3+\dot{{\mathtt m}}_{12}{\bf e}_2^1+
\dot{{\mathtt m}}_{22}^{-1}\dot{{\mathtt m}}_{12}{\bf e}_3^4+{\bf e}_2^2+{\bf e}_4^4)_{|0}$}
and let $B:{\rm D}_{\epsilon}\to \Sp(2,\C)$ be the frame $B=({\bf u}_1,{\bf u}_2,{\bf b}_3,{\bf b}_4)\, X_0$. Then, 
$B^{-1}dB=z^{k_1}
r(a^2{\bf e}^1_3+a({\bf e}_4^1+{\bf e}_3 ^2)+{\bf e}^2_4)dz$, 
where $r=\dot{{\mathtt m}}_{22}$ and 
$a=r^{-1}(\dot{{\mathtt m}}_{12}(0)\dot{{\mathtt m}}_{22}-\dot{{\mathtt m}}_{22}(0)\dot{{\mathtt m}}_{12})$. Note that $a$ has a zero of order $1+k_2$ at $0$. 
We assume $(1\pm ia)(z)\neq 0$, $\forall z$. Let $x_{j}:{\rm D}_{\epsilon}\to \C$, $j=0,\dots, 4$, be defined by
\[
\begin{split}
   x_0&=-i\log(\frac{1+ia}{\sqrt{1+a}}),\quad x_1=-\frac{z^{k_1}r(1+a^2)^2}{a'}, \\
    x_j &=\frac{1}{2(r^2(1+a^2)z^{k_1})^{j-1}}\sum_{h=0}^{j-1}\frac{x_{jh}}{z^h},\quad j=2,3,4,
   \end{split}
   \]
where
\[\begin{cases}
x_{20}=\frac{1}{2}(\frac{d}{dz}(\log(\frac{r}{a'}))+\frac{4aa'}{1+a^2}),\\
 x_{21}=\frac{k_1}{2},\\
 x_{30}=-\frac{r^2}{5}(\frac{a'''}{a'}-\frac{r''}{r})+\frac{7r^2}{20}(\frac{d}{dz}(\log(a'))^2-\frac{r^2}{4}(\frac{d}{dz}(\log(r))^2
 -\frac{1}{10}\frac{rr'a''}{a'},\\
x_{31}=-\frac{k_1r^2}{10}\frac{d}{dz}(\log(ra')),\\
 x_{32}=-\frac{k_1r^2(4+k_1)}{20}
 \end{cases}\]
 and
 \[\begin{cases}
\begin{split}x_{40}&=\frac{1}{20}(4r^3\frac{a^\textrm{\sc iv}}{a'}
  -2\frac{a'''}{a'}(11r^2\frac{d}{dz}(\log(a'))+r^2r')+3(2r^2r''-3rr'^2)\frac{d}{dz}(\log(a') \\
&\qquad +3r^2r'(\frac{d}{dz}(\log(a'))^2+21r^3(\frac{d}{dz}(\log(a'))^3-4r^2r''' +3r'(6rr'' -5(r')^2)),
\end{split}
\\
x_{41}=-\frac{k_1r}{20a'^2}(2r^2a'a'''+3a'(2rr'-r^2)a''+3a'^2(3r'^2-2rr''))),\\
x_{42}=-\frac{3}{20}k_1(2+k_1)\frac{d}{dz}(\log(ra'))r^3,\\
x_{43}=-\frac{1}{20}k_1(8+6k_1+k_1^2)r^3.
\end{cases}\]
Consider the maps $X,Y:{\rm D}_{\epsilon}\to \Sp(2,\C)$, defined by
 \[\begin{cases}
 X=\cos(x_0){I}_{4}-\sin(x_0)({\bf e}_2^1 -{\bf e}_1 ^2+{\bf e}_4 ^3-{\bf e}_3 ^4),\\
 \begin{split} \,Y&={\bf e}_2 ^2+{\bf e}_4^4+x_1^{-1}{\bf e}_3 ^3-x_2{\bf e}_4^3+x_3{\bf e}_2^3+
   x_1({\bf e}_1^1+x_2{\bf e}_1^2+(x_4+x_2x_3){\bf e}_1^3+x_3{\bf e}_1^4).\end{split}
    \end{cases}\]
The lift $BXY$ satisfies the required properties.
\end{proof}
 
 \begin{lemma}\label{l:lemma2-thmB}
 Let $A$ and $\widetilde{A}$ be two lifts of $f$ satisfying \eqref{PE4}. Then  $A^{-1}\widetilde{A}$ is ${\rm H}_1$-valued. 
 Conversely, if $A$ satisfies \eqref{PE4} and if $X$ is ${\rm H}_1$-valued, then $\widetilde{A}=AX$ satisfies \eqref{PE4}. 
\end{lemma}

\begin{proof}[Proof of Lemma \ref{l:lemma2-thmB}]
Let $X=A^{-1}\widetilde{A}$,
 $\alpha$ and $\widetilde{\alpha}$ be the pull-backs by $A$  and $\widetilde{A}$ of the Maurer--Cartan form. Then,
 \begin{equation}\label{Gaugetran}
     \widetilde{\alpha}=X^{-1}(\alpha X+dX).
       \end{equation}
From $\alpha^3_1=\alpha^4_1=0$ and \eqref{Gaugetran}, we get $\widetilde{\alpha}^3_1=(X^2_1)^2\alpha^4_2$ and  
$\widetilde{\alpha}^4_1=X^2_1X^2_2\alpha^4_2$. 
Since $\widetilde{\alpha}^3_1=\widetilde{\alpha}^4_1=0$ and $\alpha^4_2\neq 0$, we have $X^2_1=0$. 
From $\alpha^3_1=\alpha^4_1=\alpha^2_1-\alpha^4_2=0$, $X^2_1=0$ and \eqref{Gaugetran}, we have
 $\widetilde{\alpha}^2_1-\widetilde{\alpha}^4_2=(X^2_2)^{-1}(X^1_1-(X^2_2)^3)\alpha^4_2$. 
 Since  
 $\widetilde{\alpha}^2_1-\widetilde{\alpha}^4_2=0$, then $X_1^1=(X_2^2)^3$. 
 From $\alpha^3_1=\alpha^4_1=\alpha^2_1-\alpha^4_2=\alpha^1_1-3\alpha^2_2=0$, $X^2_1=X_1^1-(X_2^2)^3=0$ 
 and \eqref{Gaugetran}, we obtain 
 $\widetilde{\alpha}^1_1-3\widetilde{\alpha}^2_2=(3X^2_4X^2_2-4(X^2_2)^{-1}X^1_2)\alpha^4_2$. 
 Hence $X^1_2=\frac{3}{4}(X^2_2)^2X^2_4$. 
 From $\alpha^3_1=\alpha^4_1=\alpha^2_1-\alpha^4_2=\alpha^1_1-3\alpha^2_2=4\alpha^1_2-3\alpha^2_4=0$, $X^2_1=X_1^1-(X_2^2)^3=4X^1_2-3(X^2_2)^2X^2_4=0$ and \eqref{Gaugetran}, we have $4\widetilde{\alpha}^1_2-3\widetilde{\alpha}^2_4=-5(X^2_3X^2_2+X^1_4(X^2_2)^{-1})\alpha^4_2$. 
 Thus, $X^1_4+ X^2_3(X^2_2)^2=0$. 
 From $\alpha^3_1=\alpha^4_1=\alpha^2_1-\alpha^4_2=\alpha^1_1-3\alpha^2_2=\alpha ^1_4=0$, 
 $X^2_1=X_1^1-(X_2^2)^3=X^1_4+ X^2_3(X^2_2)^2=0$ and \eqref{Gaugetran}, we obtain 
 $\widetilde{\alpha}^1_4=((X^2_2)^{-1}X^1_3-\frac{1}{4}X^2_4X^2_3)\alpha^4_2$. Hence, 
 \[
\begin{split}
   X^2_1=X^3_1=X^4_1=X^3_2=X^4_2 &=X_1^1-(X_2^2)^3=X^1_4+ X^2_3(X^2_2)^2\\
   &=4X^1_3-X^2_2X^2_4X^2_3=
   4X^1_2-3(X^2_2)^2X^2_4=0.
  \end{split}
     \]
 This proves that $A$ is $\mathrm{H}_1$-valued.
 Retracing the calculations, one sees that if $A$ satisfies \eqref{PE4} and if $X$ is  $\mathrm{H}_1$-valued,  
 also $AX$ satisfies \eqref{PE4}. 
 \end{proof} 
  
Lemmas \ref{l:lemma1-thmB} and \ref{l:lemma2-thmB} imply that, for every $p_0\in S_{\star}$, there exist an open 
neighborhood $U$ and a cross section $U\to \mathcal F_0$ satisfying \eqref{PE4} and that the transition function of two 
such sections is ${\rm H}_1$-valued. This proves the existence and uniqueness of the reduced bundle ${\mathcal F}$. 
In addition, if $p_0\in |\Delta_f|\cup |\Delta_{f^{\sharp}}|$, then there exist an open neighborhood $U$, such that $U\cap |\Delta_f|\cup |\Delta_{f^{\sharp}}|=\{p_0\}$, and a cross section $A:U\setminus \{p_0\}\to {\mathcal F}$. The point $p_0$ is either a 
removable singularity or a pole. We call $A$ a {\it meromorphic section} of ${\mathcal F}$ at $p_0$. The 
  pull-back of the Maurer--Cartan form by a meromorphic section is holomorphic on $U\setminus \{p_0\}$ and meromorphic on $U$.
 \vskip0.1cm
  
Let $A$, $\hat A$ be two sections of ${\mathcal F}$ such that $\hat A=AX$.  
Put $X={\mathtt S}(x)$, where $x:U\cap \hat U\to \mathrm{SL}(2,\C)_1$ and denote by $\alpha$ and $\hat \alpha$ the 
pull-backs of the Maurer--Cartan form. Then, $\hat \alpha^4_2=(x^1_1)^2\alpha^4_2$ and $\hat \alpha^1_3=(x^1_1)^{-6}\alpha^1_3$.  
This implies that $\varphi^1_3(\varphi^4_2)^3$ is projectable, i.e., there exists a holomorphic quartic differential 
$\delta$ on $S_{\star}$, such that $\pi^*(\delta)=\varphi^1_3(\varphi^4_2)^3$. 
If $p_0\in |\Delta_f|\cup |\Delta_{f^{\sharp}}|$ and $A:U\to {\mathcal F}$ is a meromorphic section defined on an open 
neighborhood of $p_0$, then $\delta|_U=A^{*}(\varphi^1_3(\varphi^4_2)^3)$. This implies that $\delta$ is meromorphic on $U$.
 \vskip0.1cm
 
Let $A$, $\hat A$ be as above. From
$\hat \alpha = {\mathtt S}(x)^{-1}\alpha {\mathtt S}(x)+{\mathtt S}(x)^{-1}d{\mathtt S}(x)$, we have
$\hat A^*(\eta)=x^{-1}A^*(\eta)x+x^{-1}dx$. Taking into account that $A^*(\eta^2_1)$ is nowhere zero, it follows that $\eta$ 
is a Cartan connection.  
\end{proof}

\begin{defn}
We call $(\mathcal F, \eta)$ the projective structure of $f$. A point $p_0\in S$ is said to be \textit{heptactic} if $\delta|_{p_0}=0$. 
An isotropic curve with $\delta=0$ is called a \textit{conformal cycle}.\footnote{We implicitly assume $f(S)$ is not 
properly contained in any other cycle.} If $\delta\neq 0$, $f$  is said to be of {\it general type}.  The {\it conformal bending}  
of an isotropic curve of general type is the meromorphic function  ${\kappa}:= {\mathfrak d}(\delta)^2/\delta$.
\end{defn}

\begin{remark}
In his analysis of isotropic curves in $\C^3$, E. Cartan \cite{Ca4,JMN} defined, for a generic isotropic curve $f:S\to \C^3$ 
({\it minimal curves} in the classical terminology), a nowhere zero holomorphic 1-form $\omega$ on $S$, 
the {\it element of pseudoarc}, and a holomorphic function ${\mathtt k}$, the {\it curvature}. 
One can write $\delta$ and ${\mathfrak d}(\delta)$ in terms of $\omega$ and ${\mathtt k}$. As a result, we obtain $\delta=-\frac{1}{25}(5{\mathtt k}''-4{\mathtt k}^2)\omega^4$, where the derivatives are computed with respect to $\omega$.  If $z:S\to \C$ is a uniformizing parameter for $\omega$, then $f$ is a conformal cycle if and only if either ${\mathtt k}=0$, or else
${\mathtt k}=\sqrt[3]{15/2}\wp_{(0,g_3)}(\sqrt[3]{2/15}z)$, where $\wp_{g_2,g_3}$ is the Weierstrass elliptic $\wp$-function 
with invariants $g_2$ and $g_3$. If ${\mathtt k}=0$, we get a twisted cubic. Assuming $5{\mathtt k}''-4{\mathtt k}^2\neq 0$, 
we have ${\mathfrak d}(\delta)={\mathtt h}\omega^2$, where
$$\mbox{$
{\mathtt h}=\frac{5{\mathtt k}^{(4)}}{2(5{\mathtt k}''-4{\mathtt k}^2)}-\frac{1125{\mathtt k}^{(3)}({\mathtt k}^{(3)}-\frac{16}{5}{\mathtt k}{\mathtt k}')+800{\mathtt k}^{(2)}({\mathtt k}{\mathtt k}^{(2)}+2{\mathtt k}'^2)+512{\mathtt k}^2(\frac{25}{8}{\mathtt k}'^2-{\mathtt k}^3)}{80(5{\mathtt k}''-4{\mathtt k}^2)^2}. $
}
$$
\end{remark}

\subsection{Conformal cycles}  
The group $\SL(2,\C)$ acts on $\CP^3$ and $\Q_3$ via the representation ${\mathtt S}$ (cf. Section 2.1).  
Apart from fixed points, the orbits of this action are 1-dimensional and conformally congruent to each other.  
Let $\widehat{{\mathcal C}}\subset \CP^3$ and ${\mathcal C}\subset \Q_3$ denote, respectively, the orbits through 
$[{\bf e}_1]$ and $[{\bf e}_1\wedge {\bf e}_2]$. 
%
The equivalence problem for conformal cycles is solved by the following.

\begin{prop}\label{propcycles} 
The orbit ${\mathcal C}$  is a rational conformal cycle and $\widehat{{\mathcal C}}$ is its Legendre associate. In addition,
any other cycle is conformally congruent to ${\mathcal C}$.
 \end{prop}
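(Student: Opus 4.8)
The plan is to treat the three assertions separately, handling the model $\mathcal{C}$ and its Legendre associate by an explicit computation with the representation $\mathtt{S}$, and the uniqueness statement by integrating the structure equations of Theorem \ref{thm:FourthReduction} in the degenerate case $\delta=0$.

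First I would describe $\widehat{\mathcal C}$ and $\mathcal{C}$ through the isomorphism $\mathtt{F}:\mathrm{S}^3(\C^2)\to\C^4$. Since $\mathtt{S}(g)\,\mathbf{e}_1=\mathtt{F}((g\mathbf{a}_1)^3)$, the orbit $\widehat{\mathcal C}$ is the image under $[\mathtt{F}]$ of the rational normal curve $\{[v^3]\mid v\in\C^2\setminus\{0\}\}\subset\mathbb{P}(\mathrm{S}^3(\C^2))$, i.e.\ a twisted cubic; in particular it is rational and smoothly embedded. Parametrizing $v(z)=\mathbf{a}_1+z\mathbf{a}_2$ and using $\tfrac{d}{dz}v^3=3v^2\mathbf{a}_2$, one checks that $\mathtt{F}(v^3)\wedge\mathtt{F}(v^2\mathbf{a}_2)$ represents, up to scale, the orbit point $\mathtt{S}(g)\cdot(\mathbf{e}_1\wedge\mathbf{e}_2)$, so that $\mathcal{C}(z)=[\widehat{\mathcal C}(z)\wedge\widehat{\mathcal C}(z)']$. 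A direct substitution into $\zeta$ (the constants in $\mathtt{F}$ being tailored precisely for this) shows that $\widehat{\mathcal C}$ is Legendrian and not contained in a contact line; by the Remark following the definition of the Legendre associate, $\mathcal{C}=[\widehat{\mathcal C}\wedge\widehat{\mathcal C}\,']$ is therefore an isotropic curve whose Legendre associate is exactly $\widehat{\mathcal C}$. Both curves are rational.

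To see that $\mathcal{C}$ is a \emph{conformal} cycle I would use that $\widehat{\mathcal C}$ and $\mathcal{C}$ are immersions (the twisted cubic is nondegenerate, so $\mathbf{x}\wedge\ddot{\mathbf{x}}$ is independent of $\mathbf{x}\wedge\dot{\mathbf{x}}$), whence $\Delta_f=\Delta_{f^\sharp}=0$ and $S_\star=\CP^1$. Thus the quartic differential $\delta$ of Theorem \ref{thm:FourthReduction} is holomorphic on all of $\CP^1$, and since $H^0(\CP^1,K^{\otimes4})=0$ we conclude $\delta\equiv0$. (Equivalently, by the $\Sp(2,\C)$-equivariant and reparametrization-natural construction of $\delta$, the transitive action of $\mathrm{H}=\mathtt{S}(\SL(2,\C))$ on $\mathcal{C}\cong\CP^1$ makes $\delta$ a $\mathrm{PSL}(2,\C)$-invariant quartic differential, which must vanish.) Hence $\mathcal{C}$ is a rational conformal cycle.

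For the uniqueness statement I would integrate \eqref{PE4} under the cycle hypothesis. Let $f$ be any cycle, working on $S_\star$. Since $\delta=0$ and $\varphi^4_2\neq0$, the identity ${\pi}^*\delta=\varphi^1_3(\varphi^4_2)^3$ forces $\varphi^1_3=0$. Choosing a chart adapted to the projective structure $(\mathcal{F},\eta)$ and a flat section $\mathfrak{p}$ with $\mathfrak{p}^*\eta=\mathbf{a}^1_2\,dz$ (Lemmas \ref{l:proj-1}--\ref{l:proj-2}) normalizes $\varphi^2_2=\varphi^2_4=0$ and $\varphi^4_2=-6^{1/3}\,dz$; combining these with \eqref{PE4}, with $\varphi^1_3=0$, and with the six linear relations cutting out $\mathfrak{sp}(2,\C)$ (symmetry of $\Omega\varphi$, where $\Omega$ is the matrix of $\omega$) determines \emph{every} entry of $A^*\varphi$, yielding $A^{-1}dA=\Xi\,dz$ for a single fixed constant $\Xi\in\mathfrak{sp}(2,\C)$ independent of $f$. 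Hence along the flat section $A(z)=A(0)\exp(z\Xi)$, so the image of $f$ is the $\exp(z\Xi)$-orbit through $[A(0)_1\wedge A(0)_2]$, a single $\Sp(2,\C)$-translate of the fixed model orbit $\mathcal{O}=\{[\exp(z\Xi)_1\wedge\exp(z\Xi)_2]\}$. Consequently every cycle equals $A\cdot\mathcal{O}$ for some $A\in\Sp(2,\C)$; in particular $\mathcal{C}=A_0\cdot\mathcal{O}$, and since by hypothesis $f(S)$ is not properly contained in a larger cycle, $f(S)=A\cdot\mathcal{C}$. The main obstacle is the bookkeeping in the penultimate step: verifying that after the normalizations the full connection matrix genuinely collapses to the constant $\Xi$, and then aligning the adapted coordinates of two cycles by a Möbius transformation so as to match their frames at a single point. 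The remaining verifications (the contact identity for $\widehat{\mathcal C}$ and the immersivity of $\mathcal{C}$ and $\widehat{\mathcal C}$) are routine.
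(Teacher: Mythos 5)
Your proposal is correct, and while its overall architecture (explicit model computation plus integration of the reduced structure equations) matches the paper's, two of your key steps are handled by genuinely different arguments. For the Legendre associate, both you and the paper compute the twisted cubic $\widehat{\mathcal C}$ and verify $\mathcal{C}=[\widehat{\mathcal C}\wedge\widehat{\mathcal C}\,']$; this part is essentially identical. For the vanishing of $\delta$ on $\mathcal{C}$, the paper argues structurally: it observes that $\mathrm{H}=\mathtt{S}(\SL(2,\C))$ is the maximal integral manifold through $I_4$ of the left-invariant completely integrable system \eqref{Pfaffian}, so the frame bundle of $\mathcal{C}$ sits inside $\mathrm{H}$ where $\varphi^1_3=0$ identically. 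Your degree count ($\delta$ holomorphic on $S_\star=\CP^1$ and $H^0(\CP^1,K^{\otimes 4})=0$) and your equivariance argument both work and are arguably slicker, but they carry the extra burden of checking that $\mathcal{C}$ and $\widehat{\mathcal C}$ are unramified in the paper's sense (nonvanishing of $\dot{\mathbf m}$ and of the corresponding quantity for $f^{\sharp}$) so that $S_\star$ really is all of $\CP^1$; by homogeneity this only needs to be checked at one point, and your nondegeneracy remark covers it. For uniqueness, the paper again invokes Frobenius: the frame bundle of any cycle is an integral manifold of \eqref{Pfaffian}, hence lies in a single coset $B\cdot\mathrm{H}$, giving $f(S_\star)\subset B\cdot\mathcal{C}$ at once. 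Your explicit integration in a flat projective chart is a coordinate version of the same fact — the seven relations of \eqref{Pfaffian} together with the three components of $\eta$ do exhaust the ten entries of an $\mathfrak{sp}(2,\C)$-valued form, so the collapse to a constant $\Xi\,dz$ you flag as the main bookkeeping obstacle does go through, and $\Xi$ lies in $\mathrm{Lie}(\mathrm{H})$ so your model orbit $\mathcal{O}$ is an open subset of $\mathcal{C}$. The trade-off is that your route is local and needs the patching/identity-theorem step you mention to glue the chartwise translates into a single $B\cdot\mathcal{C}$, whereas the paper's coset argument is global from the start; conversely your computation makes the one-parameter nature of the cycle completely explicit. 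Both then finish identically via continuity and the maximality convention on cycles.
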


\begin{proof}
The stabilizer of the action of $\mathrm{SL}(2,\C)$ at $[{\bf e}_1\wedge {\bf e}_2]$ is $\SL(2,\C)_1$. 
Hence ${\mathcal C}$ is biholomorphic to $\CP^1$. The 3-dimensional subgroup $\mathrm{H}={\mathtt S}(\mathrm{SL}(2,\C))$ 
is the maximal integral submanifold through ${I}_{4}$ of the left-invariant completely integrable holomorphic Pfaffian differential 
system  on $\Sp(2,\C)$ given by
\begin{equation}\label{Pfaffian}
\varphi^3_1=\varphi^4_1=\varphi^2_1-\varphi^4_2=\varphi^1_1-3\varphi^2_2=4\varphi^1_2-3\varphi^2_4=\varphi^1_4=\varphi^1_3=0.
   \end{equation}
Then,  $\{(P,A)\in {\mathcal C}\times {\rm H}\mid  [A_1\wedge A_2]=P\}$  is a reduction of the zeroth order frame bundle 
of ${\mathcal C}$ with structure group ${\rm H}_1$. From \eqref{Pfaffian},  it follows that this bundle is the projective bundle of ${\mathcal C}$.  
Since  $\varphi^1_3=0$, the quartic differential of ${\mathcal C}$ vanishes identically. This proves that ${\mathcal C}$ is a a conformal cycle.  The punctured curve ${\mathcal C}_*={\mathcal C}\setminus \{[{\bf e}_3\wedge {\bf e}_4]\}$ is parametrized by $f(z)=[{\mathtt v}(z)\wedge {\mathtt w}(z)]$, where ${\mathtt v}(z)= {\bf e}_1+\frac{z^3}{3}{\bf e}_3-\frac{z^2}{2}{\bf e}_4$ and  
${\mathtt w}(z)= {\bf e}_2-\frac{z^2}{2}{\bf e}_3+z{\bf e}_4$. 
The orbit $\widehat{{\mathcal C}}$ is the twisted cubic $z\mapsto [{\mathtt u}(z)]$, ${\mathtt u}(z)={\mathbf e}_1+z{\mathbf e}_2-\frac{z^3}{6}{\mathbf e}_3+\frac{z^2}{2}{\bf e}_4$. 
Since ${\mathtt w}={\mathtt u}'$ and ${\mathtt v}={\mathtt u}-z{\mathtt w}$,  it follows that $\widehat{{\mathcal C}}$ is 
the Legendre associate of ${\mathcal C}$.

Next, let $f:S\to \Q_3$ be any other conformal cycle and $\mathcal F_f$ be its projective bundle. 
Since $f$ is a cycle, the map $\Psi : (p,A)\in \mathcal F_f\to A\in \Sp(2,\C)$ is an integral manifold of \eqref{Pfaffian}. 
Then, there is $B\in \Sp(2,\C)$ such that $\Psi(\mathcal F_f)\subset  B\cdot {\rm H}$. This implies
 $f(S_{\star})\subset B\cdot {\mathcal C}$. By continuity, $f(S)\subset B\cdot {\mathcal C}$ and, by maximality, 
 $f(S)= B\cdot {\mathcal C}$.\end{proof}
 
 \begin{remark} 
According to \cite{Br1988,Br2018}, it follows from Proposition \ref{propcycles}
that conformal cycles 
exhaust the class of isotropic embeddings 
of $\CP^1$ into $\Q_3$
of degree 4.
 \end{remark}
 

\begin{defn} 
The surfaces associated to a conformal cycle can be viewed as the counterparts of the {\it Cyclides of Dupin} in the 
classical {\it Lie sphere geometry} (see for instance \cite{JMN}).  For this reason they are called {\it pseudo-Cyclides}.  
By construction, all pseudo-Cyclides are
Goursat transforms of the ones tamed by the {\it standard cycle} 
$$
  f(z)=[({\mathtt v}(z)]=\Big[({\bf e}_1+\frac{z^3}{3}{\bf e}_3-\frac{z^2}{2}{\bf e}_4)\wedge ( {\bf e}_2-\frac{z^2}{2}{\bf e}_3+z{\bf e}_4)\Big].
    $$
\end{defn}

\begin{figure}[h]
\begin{center}
\includegraphics[height=5.6cm,width=11.2cm]{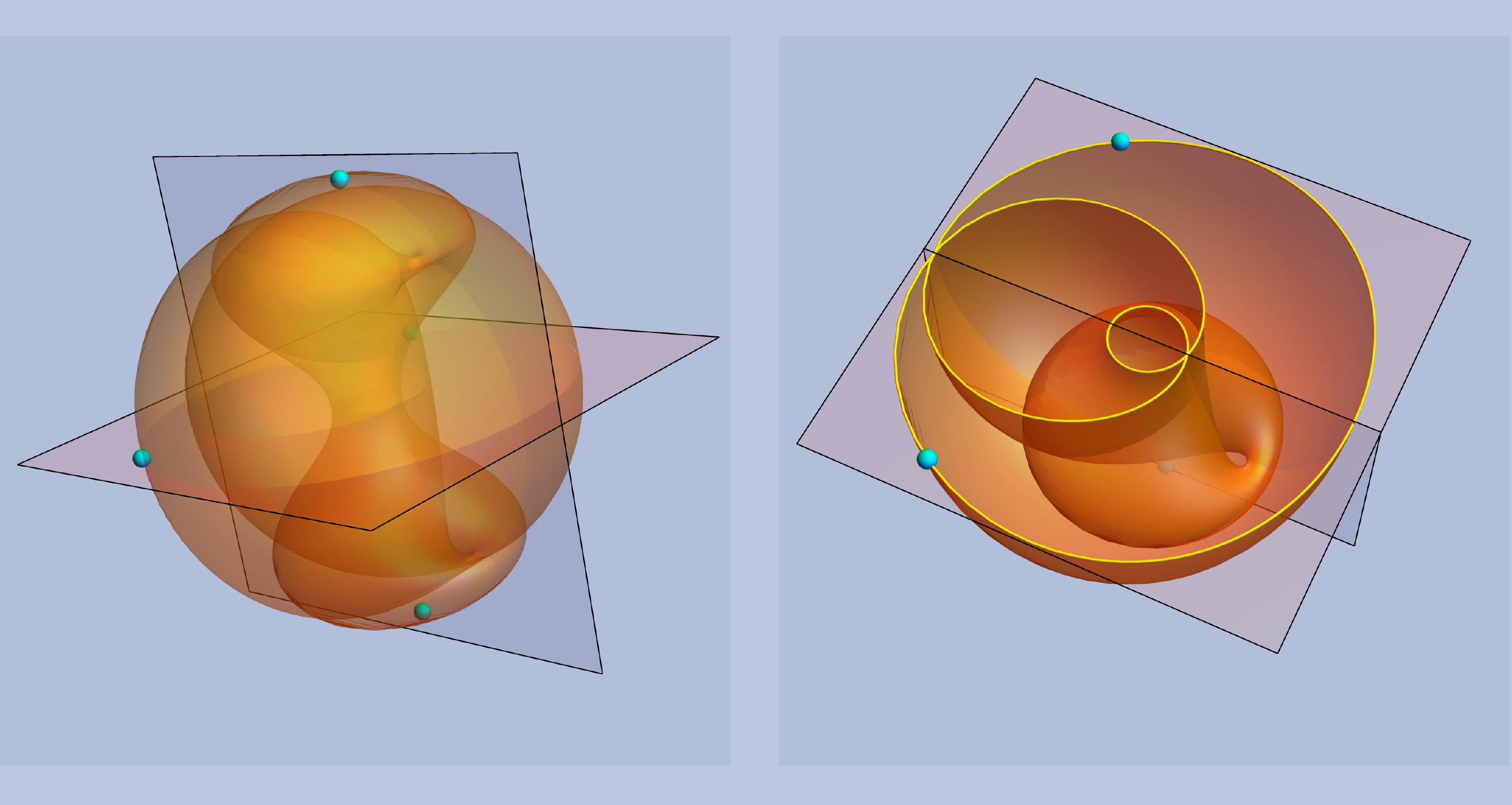}
\caption{\small{A  CMC 1 pseudo-Cyclide of ${\mathcal H}^3$ with four embedded ends.}}\label{FIGURA3}
\end{center}
\end{figure}

\begin{ex}[Standard models for pseudo-Cyclides]\label{ex1}  
Up to a homothetic factor, the minimal and maximal surfaces tamed by the standard cycle are the {\it Enneper  surface}  
and the {\it conjugate of the maximal space-like Enneper surface of the second kind}  \cite{KO1983}. 
The CMC 1 surface in hyperbolic 3-space tamed by $f$ is a Goursat transform of the rotationally-invariant  
{\it Catenoid cousin} with parameter $\mu = 1$  (cf. \cite{Br1987,JMN}), while the CMC 1 surface in de~Sitter space 
tamed by $f$ is the {\it spacelike Catenoid cousin} considered in \cite{ Lee2005}. The flat front tamed by $f$ is a Goursat 
transform of the rotationally invariant front  with parameter $\mu = -2$ considered in \cite{KUY-O}. Finally, the 
superminimal surface in $S^4$ tamed by $f$ is a Goursat transform of the Veronese embedding  \cite{Ch-S4}. 
Figure \ref{FIGURA3} depicts a CMC 1 pseudo-Cyclide in hyperbolic 3-space. It is not embedded and possesses four smooth ends.
\end{ex}

\subsection{Geometric meaning of heptactic points} 
The 7-dimensional complex homogeneous space 
${\mathfrak C}=\Sp(2,\C)/ {\rm H}$
is the {\it manifold of conformal cycles} of $\Q_3$. 
Let $f$ be an isotropic curve, 
$p\in S_\star$ 
a generic point, and 
$A\in {\mathcal F}|_p$
(cf. Theorem \ref{thm:FourthReduction}). The cycle ${\mathcal C}_f|_p= A\cdot {\rm H}\cdot f(p)$ 
is independent of the choice of $A$. We call ${\mathcal C}_f|_p$ the {\it osculating cycle} of $f$ at $p$.
The holomorphic map ${\mathcal C}_f:  {\rm S}_{\star} \ni p\mapsto {\mathcal C}_p\in {\mathfrak C}$
is called 
the {\it osculating map} of $f$.  
Using the projective structure and the Pl\"ucker map, 
one can prove the following.

\begin{prop}\label{p:contact}
Let $f$ be an isotropic curve. Then:
\begin{itemize}
\item $f$ and $ {{\mathcal C}_f}|_p$ have analytic contact of order $\ge 5$ at $p$, for every $p\in {\rm S}_{\star}$.
\item $f$ and ${\mathcal C}_f|_p$ have analytic contact of order $>5$ at $p\in S_{\star}$ if and only if $p$ is an heptactic point.
\item The heptactic points of $f$ are the critical points of the osculating map ${\mathcal C}_f$.
\end{itemize}
\end{prop}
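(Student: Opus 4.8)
The plan is to reduce everything to the normalized structure equations of Theorem \ref{thm:FourthReduction} together with the explicit standard cycle of Proposition \ref{propcycles}. Fix $p_0\in S_\star$. Since the osculating cycle and the order of contact are $\Sp(2,\C)$-invariant, I may replace $f$ by $A(p_0)^{-1}\cdot f$ and assume $A(p_0)=I_4$ for a frame $A\colon U\to\mathcal F$; then $\mathcal C_f|_{p_0}={\rm H}\cdot f(p_0)$ is the standard cycle $\mathcal C$ of Proposition \ref{propcycles}. Choosing the flat section of $(\mathcal F,\eta)$ and the associated adapted coordinate $z$ centred at $p_0$ (Lemma \ref{l:proj-1}), one has $\varphi^2_2=\varphi^2_4=0$ along $A$ and, after a constant rescaling of $z$, $\varphi^4_2=dz$. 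Writing $\varphi^1_3=s\,dz$, the equations \eqref{PE4} collapse to the linear system $A_1'=A_2$, $A_2'=A_4$, $A_4'=-A_3$, $A_3'=s\,A_1$, and $\delta=s\,dz^4$, so $p_0$ is heptactic exactly when $s(p_0)=0$.

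I would first settle the third bullet. Since $\mathcal C_f=\pi_{\rm H}\circ A$ with $\pi_{\rm H}\colon\Sp(2,\C)\to\Sp(2,\C)/{\rm H}=\mathfrak C$, its differential is $d\mathcal C_f=d\pi_{\rm H}(A\,\varphi)$, that is, $A^*\varphi$ read modulo $\mathrm{Lie}({\rm H})$. The subgroup ${\rm H}$ is cut out by the seven equations \eqref{Pfaffian}, of which the first six are precisely \eqref{PE4} and hence vanish identically on $\mathcal F$; the only one left is $\varphi^1_3$. Thus the sole surviving component of $A^*\varphi$ in $\mathfrak{sp}(2,\C)/\mathrm{Lie}({\rm H})$ is $\varphi^1_3=s\,dz$, so $d\mathcal C_f$ vanishes at $p$ if and only if $s(p)=0$, i.e. if and only if $p$ is heptactic.

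For the contact statements I would use the Pl\"ucker map $\lambda$. The curve $f$ becomes $[L]$ with $L={\mathtt l}(A_1\wedge A_2)$, while $\mathcal C_f|_{p_0}$ becomes the rational normal quartic $z\mapsto[{\mathtt l}({\mathtt v}(z)\wedge{\mathtt w}(z))]$. Differentiating $L$ through the reduced system gives $L'={\mathtt l}(A_1\wedge A_4)$, $L''={\mathtt l}(A_2\wedge A_4)-{\mathtt l}(A_1\wedge A_3)$, $L'''=-2\,{\mathtt l}(A_2\wedge A_3)$, and then $L^{(4)}=2\,{\mathtt l}(A_3\wedge A_4)+2sL$, $L^{(5)}=4sL'+2s'L$, $L^{(6)}=4sL''+6s'L'+2s''L$. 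Writing $V_j=L^{(j)}(p_0)$ computed with $s\equiv0$ — these form a basis of $\mathfrak C^5$ — one reads off that $L,L',L'',L'''$ are independent of $s$, so $[L]$ and the cycle share their $3$-jet; the fourth and fifth derivatives of $L$ differ from those of the cycle by $2s(p_0)V_0$ and $4s(p_0)V_1+2s'(p_0)V_0$, both lying in $\mathrm{span}(V_0,V_1)$; and the sixth derivative differs by $4s(p_0)V_2+6s'(p_0)V_1+2s''(p_0)V_0\equiv 4s(p_0)V_2\ (\mathrm{mod}\ \mathrm{span}(V_0,V_1))$.

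Expanding $L$ in the osculating basis $(V_0,\dots,V_4)$, dehomogenizing by the (nonvanishing) $V_0$-coordinate, and reparametrizing the cycle so that the $V_1$-coordinate of $f$ is matched identically, the order-four and order-five discrepancies — being internal to the osculating line $\mathrm{span}(V_0,V_1)$ — are absorbed by the rescaling and the reparametrization, giving order of contact $\ge 5$ at every $p_0$ and proving the first bullet. The residual $V_2$-coordinate then vanishes to order exactly six with leading coefficient a nonzero multiple of $s(p_0)$, so the contact exceeds $5$ precisely when $s(p_0)=0$, i.e. at the heptactic points, which is the second bullet. The main obstacle is exactly this bookkeeping of the projective and reparametrization freedom: one must verify that the flag-internal deviations at orders four and five are genuinely removable, whereas the transverse $V_2$-term at order six is an honest $\Sp(2,\C)$-invariant proportional to $\delta(p_0)$. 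The remaining steps are routine differentiations of the normalized structure equations.
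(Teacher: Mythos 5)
Your argument is correct and follows exactly the route the paper indicates (the paper in fact gives no proof of Proposition \ref{p:contact}, only the remark that it follows ``using the projective structure and the Pl\"ucker map''): normalizing by a flat section with $A(p_0)=I_4$ does reduce the frame equations to $A_1'=A_2$, $A_2'=A_4$, $A_4'=-A_3$, $A_3'=s\,A_1$, your derivative formulas for $L={\mathtt l}(A_1\wedge A_2)$ check out, and the graph comparison over the common tangent line leaves a residual $V_2$-discrepancy $\tfrac{s(p_0)}{72}z^6+O(z^7)$, so the contact order is exactly $5$ unless $s(p_0)=0$, i.e.\ unless $\delta|_{p_0}=0$. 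The identification of the critical points of $\mathcal{C}_f=\pi_{\mathrm H}\circ A$ with the zeros of $\varphi^1_3$ is likewise sound, since $\mathrm{Lie}(\mathrm H)$ is cut out by the seven equations \eqref{Pfaffian} and the first six already hold identically on $\mathcal F$.
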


%
%

\begin{remark} 
Proposition \ref{p:contact} implies that, if $\phi$ is tamed by $f:S\to \Q_3$, then the projection of ${\mathcal C}_f|_p$ onto the appropriate 
Riemannian or Lorentzian spaceform  is a pseudo-Cyclide with analytic contact of order at least 5 with $\phi$. 
In addition, $\phi$ is the envelope of the 2-parameter family of its osculating pseudo-Cyclides. This is reminiscent of a 
similar property for surfaces in Lie sphere geometry (cf. \cite{Blaschke}).  Figure \ref{FIGURA3Bis} reproduces a CMC 1 surface tamed 
by a generic isotropic curve (in green) and one of its osculating pseudo-Cyclide (the same one depicted in Figure \ref{FIGURA3}). 
\end{remark}

\begin{figure}[h]
\begin{center}
\includegraphics[height=5.6cm,width=11.2cm]{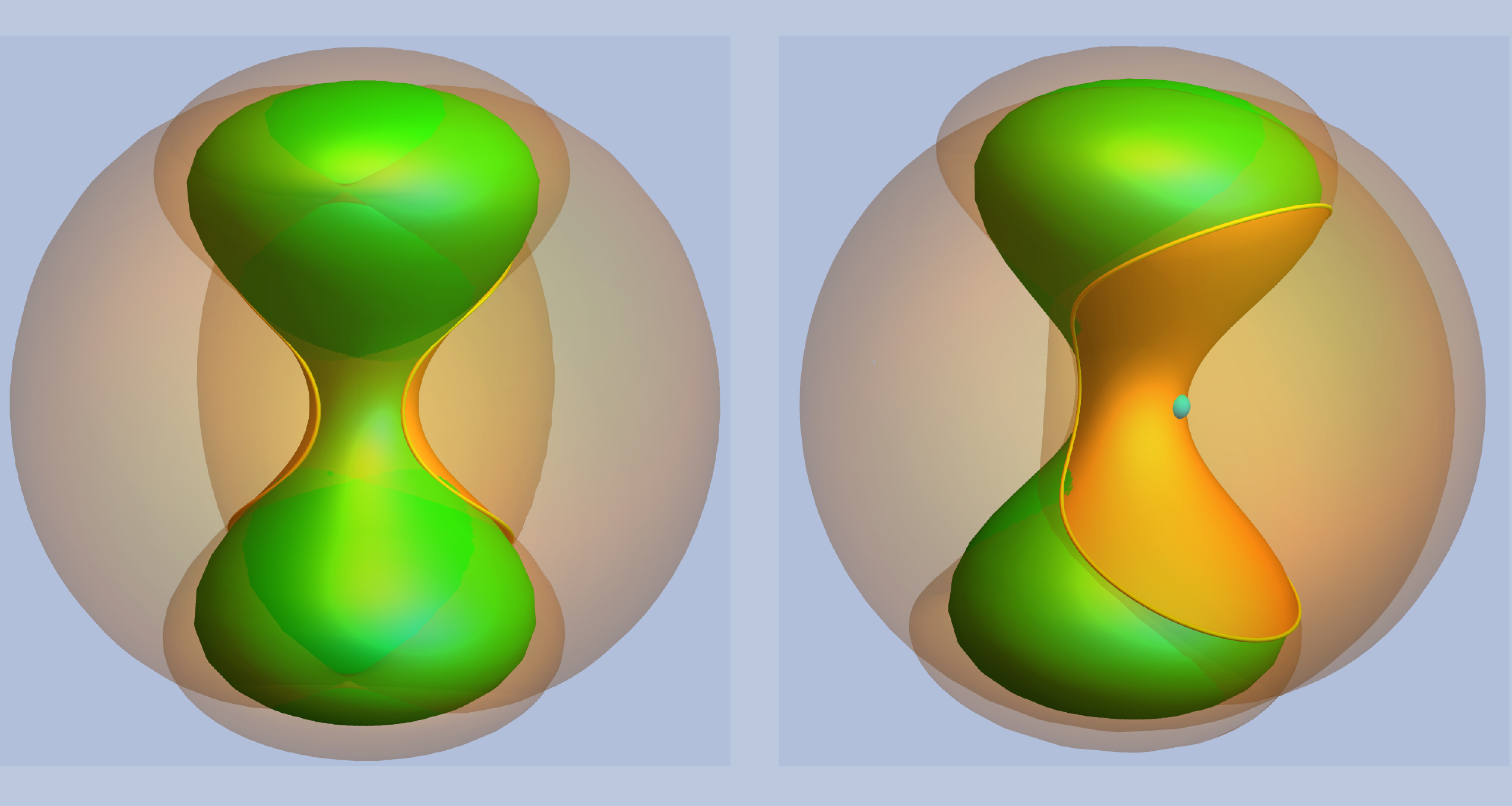}
\caption{\small{A  CMC 1 embedded surface tamed by an isotropic curve of general type (green) and one of its osculating pseudo-Cyclides (orange). The pseudo-Cyclide is the same one of Figure \ref{FIGURA3}.}}\label{FIGURA3Bis}
\end{center}
\end{figure}

\subsection{The equivalence problem}\label{ss:equiv-pbm}  

Let $f:S\to \Q_3$ and $\hat{f}:\hat{S}\to \Q_3$ be two isotropic curves. We say that $\hat{f}$ is dominated by $f$ if 
there exist a holomorphic map $h: \hat{S}\to S$ and $X\in \Sp(2,\C)$ such that $\hat{f}=X\cdot f \circ h$. 

\begin{thmx}\label{thm:equivalence} 
Let $f:S\to \Q_3$ and $\hat f:\hat{S}\to \Q_3$ be two isotropic curves of general type. Then $\hat{f}$ is dominated by $f$ 
if and only f there exists a holomorphic map $h:{\hat S}\to S$ such that $h^*(\delta)=\hat{\delta}$ and 
$h^*({\mathfrak d} \delta)={\mathfrak d}(\hat \delta)$.
\end{thmx}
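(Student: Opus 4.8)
The plan is to prove Theorem \ref{thm:equivalence} by the method of moving frames, reducing the equivalence of isotropic curves to the equivalence of their reduced frame bundles, and then showing that these bundles are determined (up to a Maurer--Cartan pullback) by the pair of differentials $(\delta, {\mathfrak d}\delta)$. Since the "dominated by" direction that produces the differentials is essentially immediate from naturality, the substance is the converse: given $h:\hat S\to S$ with $h^*(\delta)=\hat\delta$ and $h^*({\mathfrak d}\delta)={\mathfrak d}(\hat\delta)$, produce $X\in\Sp(2,\C)$ with $\hat f = X\cdot f\circ h$.

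First I would recall from Theorem \ref{thm:FourthReduction} that each isotropic curve of general type carries a canonical reduced $\mathrm{H}_1$-bundle $\mathcal F\to S_\star$ whose tautological map $\Phi:\mathcal F\to\Sp(2,\C)$ pulls back the Maurer--Cartan form $\varphi$ to a connection satisfying the structure equations \eqref{PE4}. The key observation is that on $\mathcal F$ all the entries of $\Phi^*(\varphi)$ are determined algebraically by $\varphi^4_2$, $\varphi^1_3$ and their exterior derivatives along the curve: the reduction \eqref{PE4} forces $\varphi^3_1=\varphi^4_1=0$, ties $\varphi^2_1,\varphi^1_1,\varphi^1_2,\varphi^1_4$ to $\varphi^4_2$, and leaves $\varphi^1_3$ as the only free invariant. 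Concretely, $\delta=\varphi^1_3(\varphi^4_2)^3$ recovers $\varphi^1_3$ once $\varphi^4_2$ is fixed, and differentiating the structure equations expresses the remaining Maurer--Cartan entries (in particular the coefficient encoding ${\mathfrak d}\delta$) in terms of $\delta$, ${\mathfrak d}(\delta)$ and the induced projective structure. Thus the entire $\mathfrak{sp}(2,\C)$-valued form $\Phi^*(\varphi)$ is a universal function of $(\delta,{\mathfrak d}\delta)$ and the coordinate $\varphi^4_2$.

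Next I would set up the comparison. The hypothesis $h^*(\delta)=\hat\delta$ and $h^*({\mathfrak d}\delta)={\mathfrak d}(\hat\delta)$, together with the fact that ${\mathfrak d}$ is built from the projective structure (Definition following Lemma \ref{l:proj-3}), shows that $h$ matches the projective structures $(\hat{\mathcal F},\hat\eta)$ and $(\mathcal F,\eta)$. I would then lift $h$ to a bundle map $H:\hat{\mathcal F}\to\mathcal F$ covering $h$, arranging via the $\mathrm{H}_1$-freedom that $H^*(\varphi^4_2)=\hat\varphi^4_2$; matching $\delta$ forces $H^*(\varphi^1_3)=\hat\varphi^1_3$, and matching ${\mathfrak d}\delta$ forces agreement of the next-order invariant. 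By the universal-expression step above, it follows that $H^*\bigl(\Phi^*\varphi\bigr)=\hat\Phi^*\varphi$, i.e. the two tautological maps $\Phi\circ H$ and $\hat\Phi$ from $\hat{\mathcal F}$ into $\Sp(2,\C)$ pull back the Maurer--Cartan form to the same $\mathfrak{sp}(2,\C)$-valued $1$-form. The Cartan--Darboux uniqueness theorem (the standard rigidity statement that two maps into a Lie group with equal Maurer--Cartan pullbacks differ by a fixed left translation, on connected domains) then yields a constant $X\in\Sp(2,\C)$ with $\Phi\circ H = X\cdot\hat\Phi$. Pushing down to the base via the Pl\"ucker picture gives $\hat f = X\cdot f\circ h$ on $\hat S_\star$, and continuity extends it across the discrete singular locus.

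I expect the main obstacle to be the bookkeeping in the universal-expression step: verifying that, after differentiating \eqref{PE4} and using $d\varphi=-\varphi\wedge\varphi$, \emph{every} Maurer--Cartan component along $\mathcal F$ is recovered from exactly the two invariants $\delta$ and ${\mathfrak d}\delta$, with no further independent fourth- or fifth-order datum appearing. This is precisely the content that makes $(\delta,{\mathfrak d}\delta)$ a \emph{complete} system, and it is where the specific normalization constants $6^{-1/3}$ and $(3/32)^{1/3}$ in $\eta$ and the rational weights in \eqref{PE4} must conspire correctly; it also connects with the order-four deformability and order-five rigidity announced in Theorem \ref{thm:def-rig}. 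A secondary technical point is handling the reparametrization $h$ cleanly: because $h$ need not be a biholomorphism, I would work locally on simply connected charts of $\hat S_\star$, produce $X$ chart by chart, and then invoke connectedness of $\hat S$ to glue the locally constant $X$'s into a single element of $\Sp(2,\C)$.
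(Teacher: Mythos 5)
Your overall strategy is the paper's: reduce to the frame bundle of Theorem \ref{thm:FourthReduction}, show that the pulled-back Maurer--Cartan form is pinned by $(\delta,{\mathfrak d}\delta)$, and conclude by Cartan--Darboux rigidity (the paper first reduces to the case $\hat S=S$, $h=\mathrm{id}$, and works on the universal cover of $S\setminus D_\star$ rather than gluing chart by chart, but that is cosmetic). There is, however, a genuine gap in your ``universal-expression'' step. After imposing \eqref{PE4}, the surviving independent components of $\varphi$ on $\mathcal F$ are $\varphi^4_2$, $\varphi^1_3$, $\varphi^2_2$ and $\varphi^2_4$; the last two are exactly the fiber components of the Cartan connection $\eta$ (namely $\eta^1_1$ and, up to a constant, $\eta^1_2$), so they restrict nontrivially to the fibers of the $\mathrm{H}_1$-bundle and are \emph{not} recoverable from base data by differentiating the structure equations --- they depend on the choice of section. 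Normalizing $H^*(\varphi^4_2)=\hat\varphi^4_2$ uses only the diagonal torus of $\mathrm{SL}(2,\C)_1$ and leaves the unipotent direction free; that direction acts affinely on $\varphi^2_2$ (in the paper's notation, ${\mathtt E}({\mathfrak p}x)=\epsilon^{-2}({\mathtt E}({\mathfrak p})-\epsilon x^1_2)$), so after your normalization $H^*(\varphi^2_2)$ and $\hat\varphi^2_2$ need not agree. Moreover the identity you implicitly invoke to pin $\varphi^2_4$ from ${\mathfrak d}\delta$, namely ${\mathfrak d}(\delta)=4\eta^2_1\eta^1_2$, fails in a general gauge: by Lemma \ref{tchLemma} the expression for ${\mathfrak d}\delta$ in an arbitrary section also involves $\epsilon^1_1$ and derivatives of $\epsilon^2_1$.

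The missing ingredient is precisely the paper's Lemma \ref{projreduction}: since $\delta$ is nonvanishing on $S_\sharp$, one can make a \emph{second} reduction, from $\mathrm{H}_1$ down to the finite group $\mathrm{Z}_8$, characterized by $\widehat\pi^*(\delta)=6^{3/4}(\eta^2_1)^4$ and $\eta^1_1=0$. On this reduction ${\mathfrak d}(\delta)=4\eta^2_1\eta^1_2$ does hold, so $(\delta,{\mathfrak d}\delta)$ pins all four surviving components of $\varphi$ (this is where the normalization constants you flagged do their work), and the residual ambiguity is the finite group $\mathrm{Z}_8$, which is absorbed by a final choice of trivialization. With that lemma inserted, the rest of your argument --- equality of the Maurer--Cartan pullbacks, Cartan--Darboux, extension across the discrete singular set --- goes through exactly as in the paper.
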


\begin{proof}   
It suffices to show that two isotropic curves $f$, $\hat f :S\to \Q_3$ of general type
 have the same quartic and quadratic  differentials if and only if $\hat f=X\cdot f$, for some $X\in \Sp(2,\C)$.  
 In the proof we consider ${\mathcal F}_f$ and ${\mathcal F}_{\hat f}$ as projective bundles, with structure group 
 $\mathrm{SL}(2,\C)_1$ acting on the right via the representation ${\mathtt S}$. 
 The proof is 
 organized in
 a first step, 
 two technical lemmas and a second step.
 \vskip0.1cm

\noindent {\bf Step I}.\,  Suppose that
 $\hat f=X\,f$. Then, ${\mathcal F}_{\hat f}=X {\mathcal F}_{f}$ and ${\mathtt F}: (p,A)\in  {\mathcal F}_{f}\to (p,X\, A)\in {\mathcal F}_{\hat f}$ 
 is a bundle map such that ${\mathtt F}^*(\hat \varphi)=\varphi$. Hence, ${\mathtt F}^*(\hat \eta)=\eta$  and $\delta_{f}=\delta_{\hat f}$. 
 Since ${\mathtt F}$ preserves the Cartan connections, the projective atlases induced on $S$ by $({\mathcal F}_f,\eta)$ 
 and $({\mathcal F}_{\hat f},\hat \eta)$ do coincide. This implies that ${\mathfrak d}(\delta)={\mathfrak d}(\hat{\delta})$. 
 
 \begin{lemma} \label{tchLemma}
Let $(P,\eta)$ be a singular projective structure on $S$,  let $(U,z)$ be any complex chart of $S$ 
and let ${\mathfrak p}:U\to P$ be a cross section. 
Put $\epsilon^i_jdz={\mathfrak p}^*(\eta^i_j)$. Then
\begin{equation}\label{first}
{\mathfrak d} |_U= {\mathfrak d}_ {(U,z)} + {\mathfrak r}_{(U,z,{\mathfrak p})}dz^2,
   \end{equation}
where
\begin{equation}\label{addterm}
  {\mathfrak r}_{(U,z,{\mathfrak p})}=
   -2\frac{(\epsilon^2_1)''}{\epsilon^2_1}+\frac{(\epsilon^2_1) '}{\epsilon^2_1}\left( 3\frac{(\epsilon^2_1) '}  {\epsilon^2_1} 
   -4\epsilon^1_1\right) +4\left((\epsilon^1_1)'+(\epsilon^1_1)^2+\epsilon^1_2\epsilon^2_1 \right)
     \end{equation}
     and ${\mathfrak d}_ {(U,z)} $ is defined as in \eqref{secondorderoperator}, although $(U,z)$ is not a projective chart.
  \end{lemma}

\begin{proof} 
It is easy to see that ${\mathfrak r}_{(U,z,{\mathfrak p})}dz^2$ does not depend on the choice of the section ${\mathfrak p}$.  
 Let ${\tilde z}$ be another coordinate and $h$ the transition function ${\tilde z}\circ z^{-1}$. A straightforward computation shows that 
${\mathfrak r}_{(U,z,{\mathfrak p})}dz^2={\mathfrak r}_{(U,{\tilde z},{\mathfrak p})}d{\tilde z}^2-2 \mathcal{S}_z(h) \,dz^2$.
 From \eqref{ftrans} it follows that the right hand side of \eqref{first} does not depend on the choice of the local coordinate 
 and of the cross section.
 If $(U,z)$ is a projective chart and ${\mathfrak p}$ is a flat section, then ${\mathfrak r}_{(U,z,{\mathfrak p})}=0$. This implies the result.
 \end{proof}
 
 \begin{lemma}\label{projreduction}
 Let $(P,\eta)$ be a singular projective structure on $S$ with singular locus $D$. Let $\delta$ be a meromorphic quartic differential and $S_{\sharp}=S\setminus D\cup |\Delta_{\delta}|$. 
 There exists a unique reduced bundle 
$\widehat{\pi}:{\widehat P}\to S_{\sharp}$ of $P|_{S_{\sharp}}$ with structure group 
${\rm Z}_8=\{\epsilon {\bf a}^1_1+\epsilon^{-1}{\bf a}^2_2\mid \epsilon^8=1\}$, such that
$\widehat{\pi}^*(\delta)=6^{3/4}(\eta^2_1)^4$ and $\eta^1_1=0$. In addition, $\widehat{\pi}^*({\mathfrak d}(\delta))=4\eta^2_1\eta^1_2$.
\end{lemma}

\begin{proof} 
Let ${\mathtt D}: P|_{S_{\sharp}}\to \C$ be the holomorphic function such that 
{${\pi}^*(\delta)={\mathtt D}(\eta^2_1)^4$}.
By construction ${\mathtt D}(\mathfrak p\cdot x)=(x_1^1)^8 {\mathtt D}(\mathfrak p)$, for every ${\mathfrak p}\in P|_{S_{\sharp}}$ 
and every $x\in {\SL}(2,\C)_1$. Thus, $\hat P=\{{\mathfrak p}\in P|_{S_{\sharp}}\mid {\mathtt D}({\mathfrak p})= 6^{3/4} \}$
is a reduction of $P|_{S_{\sharp}}$ with structure group ${\rm H}=\{x\in {\SL}(2,\C)_1 \mid (x^1_1) ^8=1\}$ 
such that $\hat \pi\, ^*(\delta)=6^{3/4}(\eta^2_1)^4$. 
The 1-form $\eta^1_1$ is tensorial on $\hat P$. Thus, $\eta^1_1={\mathtt E}\eta^2_1$, where ${\mathtt E}$ is a holomorphic function such that ${\mathtt E}({\mathfrak p}x)=\epsilon^{-2}({\mathtt E}({\mathfrak p})-\epsilon x^1_2)$.
Then, ${\widehat P}=\{{\mathfrak p}\in \hat P\mid {\mathtt E}({\mathfrak p})=0\}$ is the unique reduction of $P|_{S_{\sharp}}$ such that 
$\widehat{\pi}^*(\delta)=6^{3/4}(\eta^2_1)^4$, $\eta^1_1=0$. 
Let $(U,z)$ be a complex chart  and ${\mathfrak p}:U\to \hat P$ be a cross section such 
that $\delta =6^{3/4} dz^4$. Then $\epsilon ^1_1 =0$ and $(\epsilon^2_1)^4 =1$. Using the Lemma \ref{tchLemma},
we have ${\mathfrak r}_{(U,z,{\mathfrak p})} = 4\epsilon^1_2\epsilon^2_1$. Therefore, 
${\mathfrak d}(\delta) =4{\mathfrak p}^*(\eta^2_1\eta^1_2))$.
Taking into account that $\eta^2_1$ and $\eta^1_2$ are tensorial on $\hat P$, we have $\hat{\pi}^*({\mathfrak d}(\delta))=4\eta^2_1\eta^1_2$.
%
 \end{proof}
 
 \noindent  {\bf Step II}.\, Next, assume $\delta=\hat \delta$ and  ${\mathfrak d}(\delta)={\mathfrak d}(\hat \delta)$. 
 Let $D_{\star}=|\Delta_{f}|\cup | \Delta_{f^{\sharp}}|\cup |\Delta_{\hat f}|\cup | \Delta_{{\hat f}^{\sharp}}|\cup |\Delta_{\delta}|$ 
 and  ${\mathtt p}:\widetilde{S}\to S\setminus D_{\star}$ be the universal covering. 
 Since ${\mathtt p}^{*}(\delta)$ is a nowhere zero holomorphic differential, $\widetilde{S}$ is biholomorphic either to $\C$, or to the unit disk. Then, there exists $z:\widetilde{S}\to \C$ such that $\delta = dz^4$.  
 Consider the ${\mathrm Z}_8$-reductions $\hat{\mathcal F}_{f\circ {\mathtt p}}$ and 
 $\hat{\mathcal F}_{\hat f\circ {\mathtt p}}$ of the projective bundles  ${\mathcal F}_{f\circ {\mathtt p}}$ and  ${\mathcal F}_{\hat f\circ {\mathtt p}}$ constructed in Proposition \ref{projreduction}. Since $\widetilde{S}$ is simply connected, they are trivial.
 Pick two global trivialisations $A$ and $\hat A$ and denote by $\alpha$, $\hat \alpha$ the pull-backs of the Maurer--Cartan form.
 Possibly acting on the right with an element of ${\mathrm Z}_8$ we have $\alpha^4_2=\hat{\alpha}^4_2=dz$. Since $\alpha^4_2\alpha^2_4=\hat{\alpha}^4_2\hat{\alpha}^2_4={\mathfrak d}(\delta)$, then $\alpha=\hat{\alpha}$. 
 By the Cartan--Darboux congruence Theorem \cite{JMN}, there exists $X\in \Sp(2,\C)$ such that $\hat A=X\,A$. 
 This implies $\hat f\circ {\mathtt p} = X f\circ {\mathtt p}$. Hence $\hat f=X\, f$, as claimed.
 \end{proof}

\begin{prop}\label{existence}
Let $S$ be a simply connected Riemann surface. Let $\delta$ and $\gamma$ be 
two holomorphic differentials of degree four and two, respectively. If $\delta_p \neq 0$,
for every $p\in S$, then there exists an isotropic curve $f : S \to \Q_3$ of general type, such that
$ \delta_f = \delta$ and  ${\mathfrak d}(\delta_f) = \gamma$.
Moreover, $f$ is unique up to the action by an element of the symplectic group.
\end{prop}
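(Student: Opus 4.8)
The plan is to invert the reduction of Theorem~\ref{thm:FourthReduction}: instead of reading $(\delta,\mathfrak d(\delta))$ off a given curve, I build by hand a holomorphic $\mathfrak{sp}(2,\C)$-valued $1$-form on $S$ realizing the prescribed data and then integrate it. Since $S$ is simply connected and $\delta$ is nowhere zero, the section $\delta$ of the fourth power of the canonical bundle admits a global holomorphic fourth root; using the normalization of Lemma~\ref{projreduction} I choose a nowhere-vanishing holomorphic $1$-form $\theta$ on $S$ with $\delta=6^{3/4}\theta^4$ (the root is single-valued because $S$ is simply connected). As $\theta$ is nowhere zero, the quadratic differential $\gamma$ is written uniquely as $\gamma=\rho\,\theta^2$ with $\rho$ holomorphic, and I set $\eta^2_1:=\theta$, $\eta^1_1:=0$, $\eta^1_2:=\gamma/(4\theta)$, again a global holomorphic $1$-form.

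Next I reconstruct the full frame. The relations \eqref{PE4}, the expression for $\eta$ in Theorem~\ref{thm:FourthReduction}, and Lemma~\ref{projreduction} determine every component of the Maurer--Cartan form as an explicit multiple of $\theta$: all the left-hand sides of \eqref{PE4} vanish, $\varphi^2_2=\varphi^1_1=0$, $\varphi^4_2=\varphi^2_1=-6^{1/3}\theta$, $\varphi^1_3=-6^{-1/4}\theta$, $\varphi^2_4$ is the fixed multiple of $\rho\,\theta$ dictated by $\eta^1_2$, and $\varphi^1_2=\tfrac34\varphi^2_4$; the entries of the lower blocks are then forced by the symplectic symmetries $b=\T b$, $c=\T c$, $d=-\T a$. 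This produces a holomorphic map $M:S\to\mathfrak{sp}(2,\C)$ such that $\alpha:=M\,\theta$ takes values in the normal form \eqref{PE4}, with $\alpha^4_2$ nowhere zero.

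The decisive point is that no integrability relation is imposed on the pair $(\delta,\gamma)$, precisely because $S$ is one complex-dimensional: $\theta\wedge\theta=0$ and $d\theta=0$, so $d\alpha=dM\wedge\theta=0$ and $\alpha\wedge\alpha=M^{2}\,\theta\wedge\theta=0$, whence the Maurer--Cartan equation $d\alpha+\alpha\wedge\alpha=0$ holds automatically. Hence, by the existence half of the Cartan--Darboux theorem \cite{JMN} on the simply connected surface $S$, there is a holomorphic map $A:S\to\Sp(2,\C)$, unique up to left translation, with $A^{-1}dA=\alpha$. I define $f=[A_1\wedge A_2]$. Since $A^*\varphi^3_1=A^*\varphi^1_4=0$, the pullback $f^*({\bf g})$ vanishes, so $f$ is isotropic; since $A^*\varphi^4_2=-6^{1/3}\theta$ is nowhere zero, $f$ is nonconstant and not a null ray, and as $\delta\neq0$ it is of general type.

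It remains to match invariants and conclude. By construction $A$ satisfies \eqref{PE4}, so the uniqueness in Theorem~\ref{thm:FourthReduction} identifies $A$ with a global section of the canonical reduced bundle $\mathcal F$ of $f$ and identifies the projective structure of $f$ with $(\mathcal F,\eta)$. Reading off the quartic differential gives $\delta_f=A^*(\varphi^1_3(\varphi^4_2)^3)=(-6^{-1/4}\theta)(-6^{1/3}\theta)^3=6^{3/4}\theta^4=\delta$, and Lemma~\ref{projreduction} gives $\mathfrak d(\delta_f)=4\,\eta^2_1\eta^1_2=4\,\theta\cdot\frac{\gamma}{4\theta}=\gamma$; thus $f$ realizes the prescribed data. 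Uniqueness up to $\Sp(2,\C)$ is immediate from Theorem~\ref{thm:equivalence} applied with $h=\mathrm{id}_S$. I expect the only genuine work to be the algebraic bookkeeping of the second paragraph---verifying that the normalizations consistently pin down an $\mathfrak{sp}(2,\C)$-valued form lying in \eqref{PE4} with the prescribed $\delta$ and $\gamma$---rather than any analytic difficulty, since the Maurer--Cartan integrability is free on a curve.
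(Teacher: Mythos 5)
Your proposal is correct and follows essentially the same route as the paper: write down explicitly the $\mathfrak{sp}(2,\C)$-valued holomorphic $1$-form in the normal form \eqref{PE4} encoding $(\delta,\gamma)$, observe that the Maurer--Cartan equation is automatic in complex dimension one, integrate by the Cartan--Darboux existence theorem on the simply connected $S$, and deduce uniqueness from Theorem~\ref{thm:equivalence}. The only (cosmetic) difference is that the paper first invokes the Uniformization Theorem to get a global coordinate $z$ and then takes a fourth root of the coefficient function $D$, whereas you take a global fourth root of $\delta$ itself as the trivializing $1$-form $\theta$; both are legitimate since $S$ is simply connected.
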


\begin{proof}
By the Uniformization Theorem, taking into account that on the Riemann surface $S$ there 
is a non-null holomorphic differential $\delta$, it follows that $S$ is equivalent to either the complex plane or the unit disk.
On $S$, we can then consider a global holomorphic coordinate $z : S \to \C$ and write
$\gamma = \Gamma dz^2$, $\delta = D\,dz^4$ with $D_{|p} \neq 0$, for each $p\in S$. Since $S$ is simply connected, we can choose a
fourth root of $D$, say $\sqrt[4]{D}$. Next, let $E : = \Gamma/\sqrt[4]{D}$
  and consider the  holomorphic $\mathfrak{sp}(2,\C)$-valued 1-form 
  $\hat \varphi =({\bf e}^1_2+{\bf e}^2_4+{\bf e}^3_1-{\bf e}^4_3)\sqrt[4]{D}dz+({\bf e}^4_2+\frac34( {\bf e}^2_1-{\bf e}^3_4))Edx$. 
  Now, $\hat \varphi $ satisfies the Maurer--Cartan equation
 and hence, by the Cartan--Darboux existence theorem, there exists a holomorphic map 
 $A : S \to \Sp(2,\C) $ such that $ \hat \varphi = A^{-1} d A$.
  The map $ f : S\ni p \longmapsto [A_1(p) \wedge A_2(p)] \in \Q_3$ defines an isotropic curve with the required proprieties. The uniqueness assertion follows from Theorem \ref{thm:equivalence}. \end{proof}
  
Theorem \ref{thm:equivalence} and Proposition \ref{existence} can be rephrased in terms of projective structures.

\begin{cor}
Let $f:S\to \Q_3$ and $\hat f:\hat{S}\to \Q_3$ be two isotropic curves of general type. 
Then $\hat{f}$ is dominated by $f$ if and only if there exists a projective map $h:{\hat S}\to S$ such that $h^*(\delta)=\hat{\delta}$.
\end{cor}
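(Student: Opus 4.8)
The plan is to read this off from Theorem~\ref{thm:equivalence}, which already characterizes domination of $\hat f$ by $f$ through the two conditions $h^{*}(\delta)=\hat\delta$ and $h^{*}({\mathfrak d}\delta)={\mathfrak d}(\hat\delta)$ for a suitable holomorphic map $h:\hat S\to S$. It therefore suffices to prove the following equivalence: \emph{under the standing assumption $h^{*}(\delta)=\hat\delta$, the condition $h^{*}({\mathfrak d}\delta)={\mathfrak d}(\hat\delta)$ holds if and only if $h$ is a projective map}, i.e.\ $h$ is locally a M\"obius transformation with respect to the projective atlases of $f$ and $\hat f$ provided by Theorem~\ref{thm:FourthReduction}. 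Granting this, Theorem~\ref{thm:equivalence} immediately rewrites domination as the existence of a projective $h$ with $h^{*}(\delta)=\hat\delta$, which is the assertion of the corollary.

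The heart of the argument is a Schwarzian identity. Fix a generic point $\hat p\in\hat S$, set $p=h(\hat p)$, and choose a projective chart $(U,z)$ for $f$ about $p$ and a projective chart $(\hat U,w)$ for $\hat f$ about $\hat p$ with $h(\hat U)\subset U$; write $\psi=z\circ h\circ w^{-1}$ for the local expression of $h$. Alongside $w$, introduce on $\hat U$ the \emph{pullback coordinate} $\zeta=z\circ h=\psi(w)$. Writing $\delta=Z\,dz^{4}$, the hypothesis $h^{*}(\delta)=\hat\delta$ gives $\hat\delta=(Z\circ h)\,d\zeta^{4}$, so in the coordinate $\zeta$ the coefficient of $\hat\delta$ has the same functional form as $Z$. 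Since $z$ is a projective chart one has ${\mathfrak d}_{(U,z)}={\mathfrak d}$, and because the expression \eqref{secondorderoperator} involves only $z$-derivatives it is manifestly covariant under the pullback $\zeta=z\circ h$; hence
\[
   {\mathfrak d}_{(\hat U,\zeta)}(\hat\delta)=h^{*}\big({\mathfrak d}_{(U,z)}(\delta)\big)=h^{*}({\mathfrak d}\delta).
\]
On the other hand $w$ is a projective chart for $\hat f$, so ${\mathfrak d}(\hat\delta)={\mathfrak d}_{(\hat U,w)}(\hat\delta)$. Applying the transformation rule \eqref{ftrans} to the two coordinates $w$ and $\zeta=\psi(w)$ on $\hat U$ yields
\[
   {\mathfrak d}(\hat\delta)={\mathfrak d}_{(\hat U,\zeta)}(\hat\delta)+2\,\mathcal{S}_w(\psi)\,dw^{2}
   =h^{*}({\mathfrak d}\delta)+2\,\mathcal{S}_w(\psi)\,dw^{2}.
\]

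Consequently, once $h^{*}(\delta)=\hat\delta$ is assumed, the remaining condition $h^{*}({\mathfrak d}\delta)={\mathfrak d}(\hat\delta)$ is equivalent to $\mathcal{S}_w(\psi)=0$, that is, to $\psi$ being a M\"obius transformation in every compatible pair of projective charts, which is exactly the statement that $h$ is projective. The forward implication of the corollary is then the case $\mathcal{S}_w(\psi)=0$ (the naturality of ${\mathfrak d}$ under projective maps), and the converse is the reading off of the vanishing Schwarzian from the two differential conditions. The identity is first established on the complement of the discrete exceptional sets of Theorem~\ref{thm:FourthReduction} together with $|\Delta_{\delta}|$ and the ramification locus of $h$, and extends by analytic continuation. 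I expect the one genuinely delicate point to be this Schwarzian identity: it requires correctly matching the pullback coordinate $\zeta$ with the projective coordinate $z$ and invoking \eqref{ftrans} in the right direction, after which both implications are immediate.
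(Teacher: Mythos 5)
Your proof is correct and follows the route the paper intends: the corollary is presented as a rephrasing of Theorem \ref{thm:equivalence}, and your Schwarzian identity $\mathfrak{d}(\hat\delta)=h^{*}(\mathfrak{d}\delta)+2\,\mathcal{S}_w(\psi)\,dw^{2}$ --- obtained from \eqref{ftrans} together with the naturality of \eqref{secondorderoperator} under the pullback coordinate $\zeta=z\circ h$ --- is exactly the translation the paper leaves implicit. Identifying the condition $h^{*}(\mathfrak{d}\delta)=\mathfrak{d}(\hat\delta)$ (given $h^{*}(\delta)=\hat\delta$) with the vanishing of the Schwarzian, hence with projectivity of $h$ away from the discrete exceptional sets, is precisely what is needed, and both implications of the corollary then follow from Theorem \ref{thm:equivalence} as you say.
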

 
\begin{cor} 
Let $(P,\eta)$ be a projective structure on $S$ with singular locus $D$ and let $\delta$ be a nonzero meromorphic 
quartic differential on $S$. Let ${\mathtt p}: S_{\star}\to S\setminus D\cup|\Delta_{\delta}|$ be a universal covering. 
Then there exists a generic isotropic curve $f:S_{\star}\to \Q_3$ whose projective structure is equivalent 
to $({\mathtt p}^*(P),{\mathtt p}^*(\eta))$ such that ${\mathtt p}^*(\delta)=\delta_f$.
\end{cor}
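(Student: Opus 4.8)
The plan is to pass to the universal cover, where the data become a nowhere vanishing holomorphic quartic differential together with a \emph{regular} projective structure, and then to invoke Proposition \ref{existence}. The one conceptual point to be isolated is that, on a simply connected surface, a projective structure is completely recorded by the operator ${\mathfrak d}$ it induces, and that ${\mathfrak d}$ is already recovered from its value on a single nowhere vanishing quartic differential.

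First I would transport everything to $S_{\star}$. On $S\setminus(D\cup|\Delta_{\delta}|)$ the quartic differential $\delta$ is holomorphic and nowhere zero and the projective structure $(P,\eta)$ is regular; hence ${\mathtt p}^*(\delta)$ is a nowhere vanishing holomorphic quartic differential on $S_{\star}$ and $({\mathtt p}^*(P),{\mathtt p}^*(\eta))$ is a regular projective structure there. Because $S_{\star}$ is simply connected and carries a nowhere zero holomorphic quartic differential, the Uniformization Theorem (exactly as in the proof of Proposition \ref{existence}) forces $S_{\star}$ to be biholomorphic to $\C$ or to the unit disk, so it admits a global holomorphic coordinate $z$.

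Next I set $\gamma:={\mathfrak d}({\mathtt p}^*(\delta))$, the quadratic differential obtained by feeding the nowhere vanishing quartic ${\mathtt p}^*(\delta)$ into the operator of the projective structure $({\mathtt p}^*(P),{\mathtt p}^*(\eta))$. Since this structure is regular and ${\mathtt p}^*(\delta)$ is holomorphic and nowhere zero, formula \eqref{secondorderoperator} combined with Lemma \ref{tchLemma} shows that $\gamma$ is a \emph{holomorphic} quadratic differential on $S_{\star}$. Proposition \ref{existence}, applied to the pair $({\mathtt p}^*(\delta),\gamma)$ on the simply connected surface $S_{\star}$, then produces an isotropic curve $f:S_{\star}\to\Q_3$, unique up to the action of $\Sp(2,\C)$, with $\delta_f={\mathtt p}^*(\delta)$ and ${\mathfrak d}(\delta_f)=\gamma$. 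As $\delta_f$ is nowhere zero, $f$ is of general type (hence generic); moreover the global frame furnished by Proposition \ref{existence} is a section of $\mathcal F_f$ satisfying \eqref{PE4}, so the projective structure $(\mathcal F_f,\eta_f)$ of $f$ is regular on all of $S_{\star}$.

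Finally I must identify $(\mathcal F_f,\eta_f)$ with $({\mathtt p}^*(P),{\mathtt p}^*(\eta))$, and this is the step I expect to be the main obstacle. Both are regular projective structures on the simply connected $S_{\star}$, and by the choice of $\gamma$ their associated operators take the \emph{same} value $\gamma$ on the single nowhere vanishing quartic $\delta_f={\mathtt p}^*(\delta)$. Writing $\delta_f=Z\,dz^4$ with $Z$ nowhere zero, Lemma \ref{tchLemma} expresses each operator as ${\mathfrak d}(\delta_f)={\mathfrak d}_{(U,z)}(\delta_f)+{\mathfrak r}\,dz^2$, where the term ${\mathfrak d}_{(U,z)}(\delta_f)$ depends only on the coordinate $z$; hence the equality of the two values of ${\mathfrak d}(\delta_f)$ forces the two quadratic differentials ${\mathfrak r}\,dz^2$ to coincide. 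It remains to note that, relative to the fixed coordinate $z$, the quadratic differential ${\mathfrak r}\,dz^2$ is a complete invariant of the projective structure: by \eqref{ftrans} and Lemma \ref{tchLemma} the projective charts are precisely the local coordinates $w$ whose Schwarzian derivative $\mathcal{S}_z(w)$ equals a fixed multiple of ${\mathfrak r}$, and these form a single M\"obius orbit, so ${\mathfrak r}\,dz^2$ determines the projective atlas. Therefore the two projective structures agree, which together with $\delta_f={\mathtt p}^*(\delta)$ is the assertion. The delicate point is exactly this last determination---that a projective structure is pinned down by the way its operator ${\mathfrak d}$ acts on one nowhere vanishing quartic---for which Lemma \ref{tchLemma} is the essential tool.
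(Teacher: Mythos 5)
Your proof is correct and follows the route the paper intends: the corollary is presented there without a separate argument, as a direct rephrasing of Proposition \ref{existence} (take $\gamma={\mathfrak d}({\mathtt p}^*(\delta))$ computed from the given structure and apply the existence statement on the simply connected cover). Your final step---using Lemma \ref{tchLemma} to show that the value of ${\mathfrak d}$ on a single nowhere vanishing quartic pins down ${\mathfrak r}\,dz^2$, hence the Schwarzian equation $\mathcal{S}_z(w)=-\tfrac12{\mathfrak r}$ whose solutions are the projective charts, hence the structure itself---is exactly the verification the paper leaves implicit, and it is sound.
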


\subsection{Conformal deformation and rigidity} 

We adapt to our specific context, the general concepts of deformation and rigidity \cite{Ca6,Gr,JensenJDG,JM}.

\begin{defn}  
Let $f$, $\hat f:S\to \Q_3$ be two isotropic curves. We say that $\hat f$ is a {\it $k$th order conformal deformation} of $f$ if there exists a holomorphic map ${\mathtt D}: S\to \Sp(2,\C)$,
 such that $\hat f$ and ${\mathtt D}(p)\cdot f$ have analytic contact of order $k$ at $p$, for every $p\in S$. 
 A  deformation is {\it trivial} if it is congruent to $f$. 
 We say that $f$ is {\it deformable of order $k$} if, for every $p_0\in S\setminus |\Delta_f|\cup |\Delta_{f^{\sharp}}|\cup |\Delta_{\delta}|$, there exist an open neighborhood $U$ of $p_0$ and an isotropic curve of general type $\hat f:U\to \Q_3$, such that $\hat f$ is a non-trivial $k$th order deformation of $f|_U$. Otherwise, $f$ is said to be {\it rigid to order k}.
 \end{defn}
 
 \begin{thmx}\label{thm:def-rig}
 An isotropic curve of general type is deformable {of} order four and is rigid to order five.
 \end{thmx}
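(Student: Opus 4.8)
The plan is to reduce the whole statement to the two complete invariants furnished by Theorem \ref{thm:FourthReduction} and Theorem \ref{thm:equivalence} — the quartic differential $\delta$ and the quadratic differential ${\mathfrak d}(\delta)$ — by matching, level by level, the canonical frame of $f$ with that of a competitor. Two facts about general-type curves will drive the argument. \emph{(i)} Along the reduced bundle $\mathcal F$ the successive reductions of Lemma \ref{l:lemma1-thmB} exhaust the normalizations of orders zero through four, and the only invariant they produce is $\delta$; concretely, writing the canonical Frenet system as $A'=A\,\Xi$, the osculating directions of the Pl\"ucker lift $A_1\wedge A_2$ that appear at orders $1,2,3,4$, namely $A_1\wedge A_4$, $-A_1\wedge A_3+A_2\wedge A_4$, $A_2\wedge A_3$ and $A_3\wedge A_4$, fill the hyperplane section $\Q_3\subset\CP^4$ with universal, invariant-free coefficients. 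Hence two general-type isotropic curves $f,\hat f$ have analytic contact of order four at $p$ through a suitable element of $\Sp(2,\C)$ if and only if $\delta_f(p)=\delta_{\hat f}(p)$. \emph{(ii)} The next datum, ${\mathfrak d}(\delta)$, is carried by the Cartan connection $\eta$ of $\mathcal F$, precisely by its component $\eta^1_2\sim\varphi^2_4$ (recall ${\mathfrak d}(\delta)=4\,\eta^2_1\eta^1_2$ by Lemma \ref{projreduction}); this component records how the fourth-order frame moves and is therefore invisible to order-four contact, so that $f$ and $\hat f$ have analytic contact of order five at $p$ if and only if, in addition, ${\mathfrak d}(\delta_f)(p)={\mathfrak d}(\delta_{\hat f})(p)$.

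\textbf{Rigidity to order five.} Suppose $\hat f$ is a fifth order conformal deformation of a general-type isotropic curve $f$, realized by a holomorphic map ${\mathtt D}:S\to\Sp(2,\C)$, so that for every $p$ the five-jet of $\hat f$ at $p$ is ${\mathtt D}(p)$ applied to the five-jet of $f$ at $p$. Since $\delta$ and ${\mathfrak d}(\delta)$ are $\Sp(2,\C)$-invariant by \emph{(i)}--\emph{(ii)} and ${\mathtt D}(p)\in\Sp(2,\C)$, order-five contact at each point forces $\delta_{\hat f}(p)=\delta_f(p)$ and ${\mathfrak d}(\delta_{\hat f})(p)={\mathfrak d}(\delta_f)(p)$; that is, $\delta_{\hat f}=\delta_f$ and ${\mathfrak d}(\delta_{\hat f})={\mathfrak d}(\delta_f)$ as meromorphic differentials on $S$. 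By Theorem \ref{thm:equivalence} applied with $h=\mathrm{id}_S$ we conclude $\hat f=X\cdot f$ for a fixed $X\in\Sp(2,\C)$, so the deformation is trivial. Thus every fifth order deformation is trivial and $f$ is rigid to order five.

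\textbf{Deformability of order four.} Fix a generic point $p_0\in S\setminus(|\Delta_f|\cup|\Delta_{f^{\sharp}}|\cup|\Delta_{\delta}|)$ and a simply connected neighborhood $U$ on which $\delta$ is nowhere zero. Choosing any holomorphic quadratic differential $\gamma$ on $U$ with $\gamma\neq{\mathfrak d}(\delta)|_U$, Proposition \ref{existence} produces a general-type isotropic curve $\hat f:U\to\Q_3$ with $\delta_{\hat f}=\delta|_U$ and ${\mathfrak d}(\delta_{\hat f})=\gamma$. Let $A$ and $\hat A$ be the canonical frames of $f$ and $\hat f$ over $U$ and set ${\mathtt D}(p)=\hat A(p)A(p)^{-1}$, a holomorphic map $U\to\Sp(2,\C)$ matching the zeroth order frames of $\hat f$ and ${\mathtt D}(p)\cdot f$ at $p$. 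Because $\delta_{\hat f}(p)=\delta_f(p)$, the sufficiency half of \emph{(i)} shows that $\hat f$ and ${\mathtt D}(p)\cdot f$ have analytic contact of order four at every $p\in U$, so $\hat f$ is a fourth order deformation of $f|_U$. It is non-trivial: a congruence $\hat f=X\cdot f|_U$ would force, by Theorem \ref{thm:equivalence}, ${\mathfrak d}(\delta_{\hat f})={\mathfrak d}(\delta)|_U$, contradicting $\gamma\neq{\mathfrak d}(\delta)|_U$. As $p_0$ was arbitrary, $f$ is deformable of order four.

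\textbf{The main obstacle.} The entire weight of the proof sits in the two equivalences \emph{(i)} and \emph{(ii)}, i.e.\ in calibrating the Cartan reduction levels (orders zero through four, followed by the connection $\eta$) against the analytic contact order. The delicate point is that a naive derivative count, differentiating $\delta$ twice in a projective chart as in \eqref{secondorderoperator}, would assign ${\mathfrak d}(\delta)$ a jet order higher than five; one must exhibit the cancellations, visible on $\mathcal F$ through the fact that $\eta^1_2\sim\varphi^2_4$ is a single connection component, that push ${\mathfrak d}(\delta)$ down to the five-jet and keep it off the four-jet. Equally, one must check that the osculating coefficients at orders $1$--$4$ are genuinely invariant-free, so that $\delta$ is the \emph{only} obstruction to order-four contact, and that the frame-matching map ${\mathtt D}(p)$ is holomorphic and realizes exactly the asserted contact orders. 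Once this bookkeeping is in place, the two numbers four and five in the statement are precisely the orders at which $\delta$ and ${\mathfrak d}(\delta)$ enter.
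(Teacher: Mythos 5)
Your reduction hinges on the equivalence \emph{(i)}: that $f$ and $\hat f$ admit order-four contact at $p$ through some symplectic element, holomorphically in $p$, if and only if $\delta_f(p)=\delta_{\hat f}(p)$. This is false, and it is contradicted by the computation the paper actually carries out. Writing the canonical frames in a unimodular affine chart so that $\delta_{\hat f}=\hat a^4\,\delta_f$ (with $\hat a$ the function appearing in \eqref{eq2}), Lemma \ref{l:4th-order} shows that $\hat f$ is a fourth order conformal deformation of $f$ if and only if
\[
\gamma_{\hat f}=\gamma_f+\Big(2\frac{\hat a''}{\hat a}-3\Big(\frac{d\log\hat a}{dz}\Big)^2\Big)dz^2,
\]
a second-order differential relation coupling $\gamma_{\hat f}$ to the ratio $\hat a=(\delta_{\hat f}/\delta_f)^{1/4}$; it neither implies nor is implied by $\delta_{\hat f}=\delta_f$. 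This sinks your deformability construction: if you impose $\delta_{\hat f}=\delta|_U$ (i.e.\ $\hat a\equiv 1$), the correction term vanishes and the criterion forces $\gamma_{\hat f}=\gamma_f$, whereupon Theorem \ref{thm:equivalence} makes $\hat f$ congruent to $f$. So the curve you build via Proposition \ref{existence} with $\gamma\neq{\mathfrak d}(\delta)|_U$ is simply \emph{not} a fourth order deformation of $f$, and every fourth order deformation preserving $\delta$ is trivial. The genuine nontrivial deformations are obtained by letting $\hat a$ be an arbitrary nonconstant holomorphic function and solving the displayed relation for $\hat b$; they necessarily change $\delta$.

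On the rigidity half, your endgame (match both invariants, then invoke Theorem \ref{thm:equivalence}) coincides with the paper's, but the step that carries all the weight --- that order-five contact along a holomorphic ${\mathtt D}$ forces $\delta_{\hat f}=\delta_f$ and ${\mathfrak d}(\delta_{\hat f})={\mathfrak d}(\delta_f)$ --- is asserted rather than proved, and the justification you offer rests on the false equivalence \emph{(i)}. In the paper this is extracted from the fifth-order jet identity \eqref{syst}, whose second scalar component reads $6\hat a(1-\hat a^4)=0$; this forces $\hat a^4=1$, hence (after acting by ${\rm Z}_8$) $\hat a=1$, then $\hat b=b$ by \eqref{constitutive}, and congruence by the Cartan--Darboux theorem. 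You correctly flag the calibration of jet orders against the invariants as ``the main obstacle,'' but the proposal never performs it, and the calibration you posit at order four is the wrong one.
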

 
 \begin{proof} We first recall the following.
 \vskip0.1cm

\noindent {\it Fact}.\, Two holomorphic maps $\psi$, $\hat \psi : S \to \CP^4$  have the same $k$th order jets at $p_0$ 
if and only if for any lifts $\Psi,\hat \Psi:S\to \C^5$ of $\psi$ and $\hat \psi$ and for every complex chart $(U,z)$ with $p_0\in U$, 
there exist
$\varrho_j\in \big(\Omega^{1,0}(S)|_{p_0}\big)^n$, $n=0,\dots, k$, such that 
\begin{equation}\label{contact}
  \delta^j(\hat \Psi)|_{p_0}=\sum_{i=0}^{j}c_i^j \varrho_i\delta^{j-i}(\Psi)|_{p_0},
  \quad j=0,\dots ,k,\quad \delta^k(\Psi)=\frac{d\Psi^k}{dz^k}(dz)^k,    
   \end{equation}
 where $c^j_i\in {\mathbb N}$ are defined by $c^j_0=c^j_j=1$ and by $c^j_i=c^{j-1}_{i-1}+c_i^{j-1}$, for every $j\ge 2$ and $1\le i< j$.
 \vskip0.1cm
 
 \noindent Since the 
 result
 is local, we may assume the existence of a global section $A$ of the ${\mathrm Z}_8$-bundle ${\mathcal F}_f$. Recall that 
 \begin{equation}\label{eq1}
   A^{-1}dA=({\bf e}^1_2+{\bf e}^2_4+{\bf e}^3_1-{\bf e}^4_3)\zeta+ \big({\bf e}^4_2+\frac{3}{4}({\bf e}^2_1-{\bf e}^3_4)\big)\eta
      \end{equation}
 where $\zeta,\eta$ are holomorphic 1-forms and $\zeta|_p\neq 0$. 
 Without loss of generality, we may suppose that $\zeta$ is nowhere zero. It defines on $S$ a unimodular affine structure 
 consisting of all complex charts $(U,z)$ such that $\zeta=dz$, $\eta = b dz$. 
 Let $\hat f$ be another isotropic curve of general type and $\hat A$ be a cross section of ${\mathcal F}_{\hat f}$. 
 Then
  \begin{equation}\label{eq2}
   \hat A^{-1}d\hat A=
      \hat a({\bf e}^1_2+{\bf e}^2_4+{\bf e}^3_1-{\bf e}^4_3)dz+ \hat b\big({\bf e}^4_2+\frac{3}{4}({\bf e}^2_1-{\bf e}^3_4)\big)dz,    
        \end{equation}
 where $\hat a$, $\hat b$ are holomorphic functions, $\hat a \neq 0$. Without loss of generality, we suppose that 
 $\hat a$ is nowhere zero. The quadratic differential
 $$
   {\mathfrak s}= \Big(2\frac{\hat a ''}{\hat a} - 3\Big(\frac{d\log \hat a}{dz}\Big)^2\Big)dz^2
$$
 does not depend on the choice of the unimodular affine chart.

 \begin{lemma}\label{l:4th-order}
 $\hat f$ is a fourth order conformal deformation of $f$ if and only if $\gamma_{\hat f}=\gamma_f+\mathfrak s$.
   \end{lemma}
 
\begin{proof} 
Let $\lambda :\Q_3\to {\mathcal Q}_3$ be the Pl\"ucker map and 
${\mathtt L}:\Sp(2,\C)\to  {\rm O}({\mathfrak C}^5,g_{\mathfrak C})$ be the spin covering homomorphism. Consider
$\psi =\lambda\circ f$, $\hat \psi=\lambda\circ \hat f$ and let
${\mathcal A},  \hat{\mathcal A}:S\to {\rm O}({\mathfrak C}^5,g_{\mathfrak C})$ be the maps defined by
${\mathcal A}={\mathtt L}\circ A$, $\hat{\mathcal A}={\mathtt L}\circ \hat A$. Then, ${\mathcal A}_1$ is a 
lift of $\psi$ and $\hat {\mathcal A}_1$ is a lift of $\hat \psi$. In addition, ${\mathcal A}$ and $\hat {\mathcal A}$ 
satisfy ${\mathcal A}^{-1}d\mathcal A={\mathtt N}dz$ and 
$\hat{{\mathcal A}}^{-1}d\hat{\mathcal A}=\hat{{\mathtt N}}dz$, where ${\mathtt N},\hat{{\mathtt N}}:S\to {\mathfrak o}({\mathfrak C}^5,g_{\mathfrak C})$ are given by
\begin{equation}\label{N}
 \begin{cases}
{\mathtt N}=({\bf b}^1_2+\sqrt{2}({\bf b}^3_4-{\bf b}^2_3)-{\bf b}^4_1-{\bf b}^4_5+{\bf b}^5_2) +
b({\bf b}^2_1-{\bf b}^5_4+\frac{3}{3\sqrt{2}}({\bf b}^4_3-{\bf b}^3_2)),\\
  \hat{{\mathtt N}}=\hat a({\bf b}^1_2+\sqrt{2}({\bf b}^3_4-{\bf b}^2_3)-{\bf b}^4_1-{\bf b}^4_5+{\bf b}^5_2) +
   \hat b({\bf b}^2_1-{\bf b}^5_4+\frac{3}{3\sqrt{2}}({\bf b}^4_3-{\bf b}^3_2)).
       \end{cases}
          \end{equation}
Let 
${\mathtt F}_{(h)}$, $\hat{{\mathtt F}}_{(h)}:S\to \mathbb C^5$ 
be given by the recursive formulae
\begin{equation}\label{rec}
   {\mathtt F}_{(0)}=\hat{{\mathtt F}}_{(0)}={\bf b}_1,\quad  {\mathtt F}_{(h)}=(\frac{d}{dz}+{\mathtt N}){\mathtt F}_{(h-1)},\quad 
     \hat{{\mathtt F}}_{(h)}=(\frac{d}{dz}+\hat{{\mathtt N}})\hat{{\mathtt F}}_{(h-1)},
    \end{equation}
    where $({\bf b}_1, \dots, {\bf b}_5)$ is the canonical basis of $\mathbb C^5$.
Define ${\mathtt F}$, $\hat{\mathtt F}:S\to {\rm GL}(5,\C)$ by ${\mathtt F}=({\mathtt F}_{(0)},\dots , {\mathtt F}_{(4)})$ and by 
$\hat{{\mathtt F}}=(\hat {\mathtt F}_{(0)},\dots , \hat {\mathtt F}_{(4)})$. Then 
\begin{equation}\label{J}J^{(4)}({\mathcal A}_1)={\mathcal A}\cdot {\mathtt F},\quad  
J^{(4)}(\hat {\mathcal A}_1)=\hat {\mathcal A}\cdot \hat {\mathtt F},
\end{equation}
 where
$$
    \begin{cases}J^{(4)}({\mathcal A}_1)=
     \Big({\mathcal A}_1,\frac{d{\mathcal A}_1}{dz},\frac{d^2{\mathcal A}_1}{dz^2},\frac{d^3{\mathcal A}_1}{dz^3},\frac{d^4{\mathcal A}_1}{dz^4}\Big),\\
J^{(4)}(\hat {\mathcal A}_1)=\Big(\hat{\mathcal A}_1,\frac{d\hat{\mathcal A}_1}{dz},\frac{d^2\hat{\mathcal A}_1}{dz^2},\frac{d^3\hat{\mathcal A}_1}{dz^3},\frac{d^4\hat {\mathcal A}_1}{dz^4}\Big).
     \end{cases}
      $$
From the fact mentioned at the beginning of the proof, $\hat f$ is a fourth order deformation of $f$ if and only if 
there exist a holomorphic map
${\mathcal D}:S\to {\rm O}({\mathfrak C}^5,g_{\mathfrak C})$ and holomorphic functions $r_j$, $j=0,\dots, 4$, such that 
\begin{equation}\label{def}
   J^{(4)}(\hat {\mathcal A}_1)={\mathcal D}\cdot J^{(4)}({\mathcal A}_1)\cdot {\rm R},
       \end{equation}
 where
\begin{equation}\label{R}
   {\rm R}=r_0 {I}_{5}
       +r_1({\bf b}_1^2+2{\bf b}_2^3+3{\bf b}_3^4+4{\bf b}_4^5)+ r_2({\bf b}_1^3 +3{\bf b}_2^4 + 6 {\bf b}_3^5) +   r_3({\bf b}_1^4+4{\bf b}_2^5)+r_4{\bf b}_1^5.
         \end{equation}
By construction, we have 
\begin{equation}\label{Rel}
   {\mathcal D}=\hat{\mathcal A}\,\hat{\mathtt F}\,\mathrm{R}^{-1}\,{\mathtt F}^{-1}\,{\mathcal A}^{-1}.
       \end{equation}
This implies that ${\mathtt F}{\rm R}\hat{\mathtt F}^{-1}$ takes values in the orthogonal group ${\rm O}({\mathfrak C}^5,g_{\mathfrak C})$. Imposing the orthogonality conditions, we obtain, letting $\varepsilon = \pm 1$, 
\begin{equation}\label{r}
\mbox{$\footnotesize
\begin{cases}
r_0=\varepsilon{\hat a}^2,\\
r_1=2\varepsilon\hat a \hat a',\\
r_2=\frac{\varepsilon}{7}(5(\hat a^3\hat b - \hat a^2 b) +29\hat a'^2+4\hat a \hat a''),\\
r_3=\frac{\varepsilon}{42\hat a}\left( 14\hat a\hat a'''+132\hat a\hat a'\hat a'' +(390\hat a'^2-200\hat a^2b+235 \hat a^3\hat b)\hat a' -35\hat a^3b'+35\hat a^4\hat b'\right),\\
%
%
\begin{split} 
r_4 =&\frac{\varepsilon}{294\hat a^2}\left(1372\hat a^2\hat a' \hat a''' + (1266\hat a^4\hat b+480\hat a\hat a'^2-720\hat a^3b+624\hat a^2\hat a'')\hat a''\right.+\\
&\quad \qquad \left.(2548\hat a^4\hat b'-1960\hat a^3b')\hat a' +(8342\hat a^3\hat b-6760\hat a^2b)\hat a'^2+9195\hat a'^4\right.+\\
&\quad \qquad \left.588\hat a^8+(294\hat b''-900 b\hat b)\hat a^5 -(294(2+\hat b'')+513b^2)\hat a^4 +387{\hat a}^6{\hat b}^2 \right),
\end{split}
\end{cases} $}
\end{equation}
and, in addition,
\begin{equation}\label{constitutive}
  \hat b=\frac{1}{\hat a}b+2\frac{\hat a''}{\hat a^2}-3\frac{\hat a'^2}{\hat a^3}.
      \end{equation}
This proves that  $\gamma_{\hat f}=\gamma_f+\mathfrak s$. 
Conversely, if  $\gamma_{\hat f}=\gamma_f+\mathfrak s$, then $\hat b$ is as in \eqref{constitutive}. 
Define $r_0,\dots, r_4$, ${\rm R}$, and ${\mathcal D}$ as in \eqref{r}, \eqref{R} and \eqref{Rel}. 
Then ${\mathcal D}$ is $ {\rm O}({\mathfrak C}^5,g_{\mathfrak C})$-valued. From \eqref{Rel} it follows 
that ${\mathcal A}_1$ and $\hat {\mathcal A}_1$ satisfy \eqref{def}. 
This implies that $\hat f$ is a fourth order deformation of $f$.
\end{proof}

As a consequence of the Lemma \ref{l:4th-order}, it follows that an isotropic curve of general type 
is deformable of order four and that its local deformations depend on one arbitrary holomorphic function. 
We conclude the proof by showing that an isotropic curve of general type is rigid to order five. 
Let $\hat f$ be a fifth order deformation of $f$. Since $\hat f$ is a fourth order deformation of $f$, 
then $\hat b$ is as in \eqref{constitutive} and the lifts ${\mathcal A}_1$, $\hat {\mathcal A}_1$ of $\psi$ and $\hat \psi$ 
satisfy \eqref{def}, where ${\rm R}$ and ${\mathcal D}$ are as in \eqref{R},  \eqref{r} and \eqref{Rel}. 
Since $\hat f$ is a fifth order deformation, there exists a holomorphic function $r_5$, such that 
$$
   \frac{d^5\hat{\mathcal A}_1}{dz^5}={\mathcal D}\cdot \left(J^{(4)}({\mathcal A}_1)(r_5{\bf b}_1+5r_4{\bf b}_2+10r_3{\bf b}_3+10r_2{\bf b}_4+5r_1{\bf b}_5)+r_0\frac{d^5{\mathcal A}_1}{dz^5}\right),
          $$
where $({\bf b}_1,\dots, {\bf b}_5)$ is the standard basis of $\C^5$. Taking into account that
$$
 \frac{d^5{\mathcal A}_1}{dz^5}
  ={\mathcal A} {\mathtt F}_{(5)},\quad \frac{d^5\hat{\mathcal A}_1}{dz^5}=\hat {\mathcal A} \hat {\mathtt F}_{(5)},
 $$
we obtain
$$
 \hat{\mathcal A}\hat {\mathtt F}_{(5)}
  ={\mathcal D}{\mathcal A}({\mathtt F}(r_5{\bf b}_1+5r_4{\bf b}_2+10r_3{\bf b}_3+10r_2{\bf b}_4)+r_0{\mathtt F}_{(5)}).
    $$
Using \eqref{Rel}, we have
$$
 \hat {\mathtt F}_{(5)}
    =\hat{\mathtt F}{\rm R}^{-1}((r_5{\bf b}_1+5r_4{\bf b}_2+10r_3{\bf b}_3+10r_2{\bf b}_4)+r_0{\mathtt F}^{-1}{\mathtt F}_{(5)}),
       $$
which, in turn, implies
\begin{equation}\label{syst}
   {\mathtt F}{\rm R}\hat{{\mathtt F}}^{-1}\hat{{\mathtt F}}_{(5)}  = {\mathtt F}(r_5{\bf b}_1
      +5r_4{\bf b}_2+10r_3{\bf b}_3+10r_2{\bf b}_4)+r_0{\mathtt F}_{(5)}.
        \end{equation}
The second scalar equation of the system \eqref{syst} implies $6\hat a(1-\hat a^4)=0$. 
Then $\hat a$ is a fourth root of unity. Possibly acting on the right of $\hat A$ with an element of ${\rm Z}_8$, 
we may assume $\hat a=1$. From  \eqref{constitutive}, we have $\hat b=b$. 
Then, \eqref{eq1} and \eqref{eq2} imply that $\hat A=X\cdot A$, for a unique $X\in \Sp(2,\C)$. 
This proves that $f$ and $\hat f$ are conformally congruent to each other.
  \end{proof}

\section{Isotropic curves with constant bending}\label{s:cc isotropic curves}

\subsection{Isotropic $W$-curves}  

Let $q=m/n\in \Q$ be a nonzero rational number, $|q|\neq 1$. 
Put 
$\varepsilon_q = m+n \pmod{2}$ and define ${\mathtt v}_q, {\mathtt w}_q:\C\to \C^4$ by
\begin{equation}\label{w}
\begin{cases}
{\mathtt v}_q (z) =(m-n) {\bf e}_1-(m+n)z^{(1+\varepsilon_q)m} {\bf e}_3-
2i\sqrt{mn}z^{\frac{(m+n)(1+\varepsilon_q)}{2}}{\bf e}_4 ,\\
{\mathtt w}_q(z) = (m-n){\bf e}_2-2i\sqrt{mn}z^{\frac{(m+n)(1+\varepsilon_q)}{2}}{\bf e}_3+(m+n)z^{n(1+\varepsilon_q)}{\bf e}_4 .
\end{cases}
\end{equation}
Let $f_q:\C\cup \{\infty\}\to  \Q_3$ be the one-to-one isotropic curve defined by  $f_q(z)= [{\mathtt v}_q(z)\wedge {\mathtt w}_q(z)]$, $f_q(\infty)=[{\bf e}_3\wedge {\bf e}_4]$. 
We call $f_q$ the {\it isotropic $W$-curve} with parameter
$q$.
The Legendre associate of $f_q$ is computed to be
\begin{equation}\label{aw}
 f^\sharp_q(z)=[i\sqrt{n}({\bf e}_1+iz^{(1+\varepsilon_q)m}{\bf e}_3)+\sqrt{m}(z^{\frac{(1+\varepsilon_q)(m-n)}{2}}{\bf e}_2
-z^{\frac{(1+\varepsilon_q)(m+n)}{2}}{\bf e}_4)],
      \end{equation}
which is a $W$-curve in the classical sense (cf. \cite{RB,Ch-S4}). This motivates the terminology.
If $q=\pm \,3,\pm \,1/3$,  $f_q$ is a conformal cycle. For all other values of $q$, the isotropic $W$-curve is of general type.  
Assuming $q\neq \pm \,3,\pm \,1/3$, one can
explicitly build a global meromorphic section of the ${\mathrm Z}_8$-bundle $\widetilde{\mathcal F}$, holomorphic 
on $\dot{\C}$, with poles at $0$ and $\infty$. Consequently, one can compute the meromorphic differentials to obtain
$$
    \delta_{q}=- \frac{9m^4-82m^2n^2+9n^4}{100z^4}dz^4,\quad \gamma_{q}=\frac{2(m^2+n^2)}{5z^2}dz^2.
       $$
Thus, $f_q$ has constant bending 
\begin{equation}\label{bendingw}
     \kappa_q=-16(1+q^2)^2(9q^4-82q^2+9)^{-1},\quad q\in \Q\setminus \{\pm \,3, \pm \,1/3\}.
        \end{equation}
In particular, $f_q$, $f_{q^{-1}}$, $f_{-q}$ and $f_{-q^{-1}}$ have the same differentials. 
By Theorem \ref{thm:equivalence},
from the viewpoint of conformal geometry, these curves are then equivalent to each other. Consequently, we may assume  $q>1$. 
From
\cite{Br1988,Br2018}, using \eqref{w} and  \eqref{aw}, it follows that the degree 
$d_{f_q}$ and the ramification degree $r_{f_q}$ of $f_q$ are
 $r_{f_q}=2((1+\varepsilon_q)n-1)$ and  $d_{f_q}=(1+\varepsilon_q)(m+n)$. In particular, if $q=2p+1$ is an odd integer, $f_q$ 
 is 
 an isotropic 
 immersion of degree
{$2(p+1)$}. Referring again to \cite{Br1988,Br2018}, conformal cycles exhaust the class of 
isotropic immersions of degree 4 and each 
isotropic
immersion of degree 6 is conformally equivalent to $f_5$.

\begin{figure}[h]
\begin{center}
\includegraphics[height=5.6cm,width=11.2cm]{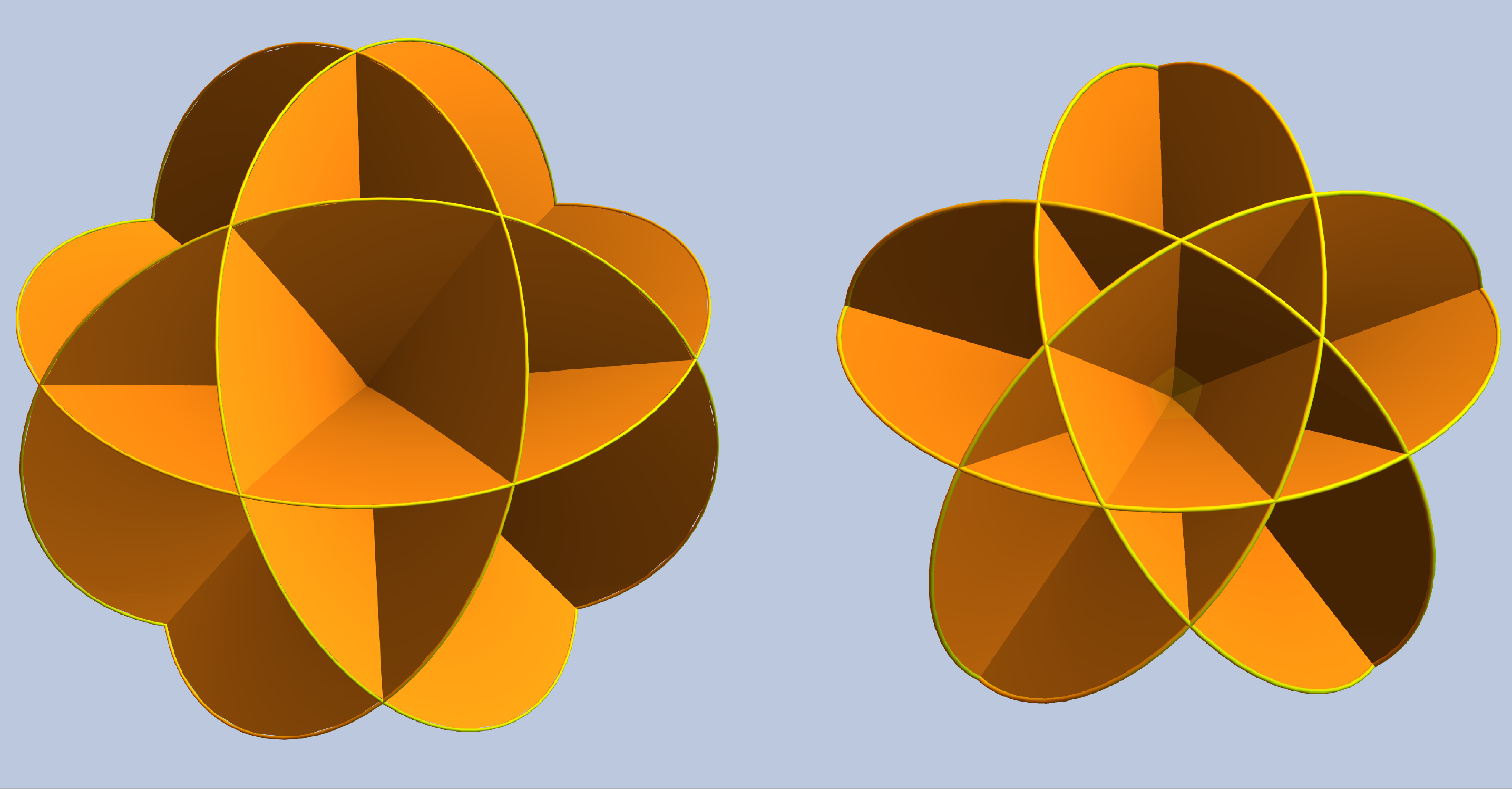}
\caption{\small{ The minimal surfaces tamed by $f_{5/3}$ (left) and by $f_{3/2}$ (right)}.}\label{FIGURA4}
\end{center}
\end{figure}

\begin{remark} 
If $m+n$ is even, the $W$-curve $f^\sharp_q$ is congruent to the Legendrian curve with Bryant's potentials $g=iz^{(n-m)/2}$ and 
$f=(m-n)z^n/n$. If $m+n$ is odd, $f^\sharp_q$ is congruent to the Legendrian curve with Bryant's potentials $g=-i\sqrt{m/n}z^{n-m}$ and 
$f=(m-n)m^{n/(m-n)}n^{m/(n-m)})z^{2n}$.
\end{remark}


\begin{ex}\label{ex2}  
We now briefly describe the surfaces tamed by isotropic $W$-curves, with the exclusion of the cycles (i.e., $q=\pm\, 3,\pm \,1/3$), 
already considered in Example \ref{ex1}.  Let $q=m/n\in \Q\setminus \{\pm\,1, \pm\, 3, \pm\, 1/3\}$. 

(1) The minimal surface tamed by $f_q$ is a conformal Goursat transform of the branched minimal immersion with Weierstrass data  $(z^{h_1(q)},z^{h_2(q)}dz)$, where  $h_1(q)=\frac{1}{2}(n-m)(1+\varepsilon_{q})$ and $h_2(q)=\frac{1}{2}(m+n)(1+\varepsilon_{q})-1$. 
In particular, if $m=2\hat m+1$, $n=1$, then $h_1=h_2=\hat m$ and, if $m=2\hat m+1$ and $n=1$, we have $h_1=\hat m+1$ and 
$h_2=\hat m-1$. The minimal surfaces with this Weierstrass data are the  {\it Enneper surface of order $\hat m$} and the {\it $A_4$ surface of order $\hat m$}, respectively (cf. \cite[pp. 202-204]{DHS}).  
These are conformal Goursat transforms of each other.  The Enneper surface of order $\hat m$ possesses one non-embedded end of order $2\hat m+1$ at infinity. The $A_4$-surface of order $\hat m$ has one non-embedded end of order $2\hat m+1$ at infinity and one planar embedded end at the origin. Then, they are not classical Goursat transforms of each other. 
This shows that the (conformal) Goursat transform can modify the behaviour of the ends, even in the case of minimal surfaces 
tamed by rational isotropic curves. Figure \ref{FIGURA4} reproduces the minimal surfaces tamed by the isotropic $W$-curves 
$f_{5/3}$ and $f_{3/2}$, respectively.

(2) The CMC 1 surfaces of hyperbolic 3-space tamed by isotropic $W$-curves are Goursat transforms of the Catenoid cousin 
with rational parameter $\mu = (q-1)/2$ considered in \cite{Br1987}.  CMC 1 surfaces in hyperbolic space with and 
arbitrary number $\hat m$ of smooth ends and a rotational symmetry group of order $\hat m-1$ can be constructed 
from the CMC 1 surface tamed by $f_{2\hat m+1}$, via Goursat transformation \cite{Bo-Pe2005}. 
Figure \ref{FIGURA6} depicts a Goursat transform of the Catenoid cousin with parameter $5$, with seven ends. 
It is invariant by the group of order six generated by the rotation of angle $2\pi/6$ around the $z$-axis. 
This shows that the Catenoid cousin with rational parameter $(q-1)/2$ is a conformal Goursat transform of the minimal surfaces with Weierstrass data $h_1(q)$ and $h_2(q)$.

(3) The flat front of ${\mathcal H}^{3}$ tamed by $f_q$ is a Goursat transform of the flat fronts of revolution 
with rational parameter $\mu = (q-1)/(q+1)$ examined in \cite{KUY-O}. In particular, the flat front of revolution with 
rational parameter $\mu$ is a conformal Goursat transform of the Catenoid cousin with parameter $\mu/(\mu -1)$.
\end{ex}

 \begin{figure}[h]
\begin{center}
\includegraphics[height=4.5cm,width=13cm]{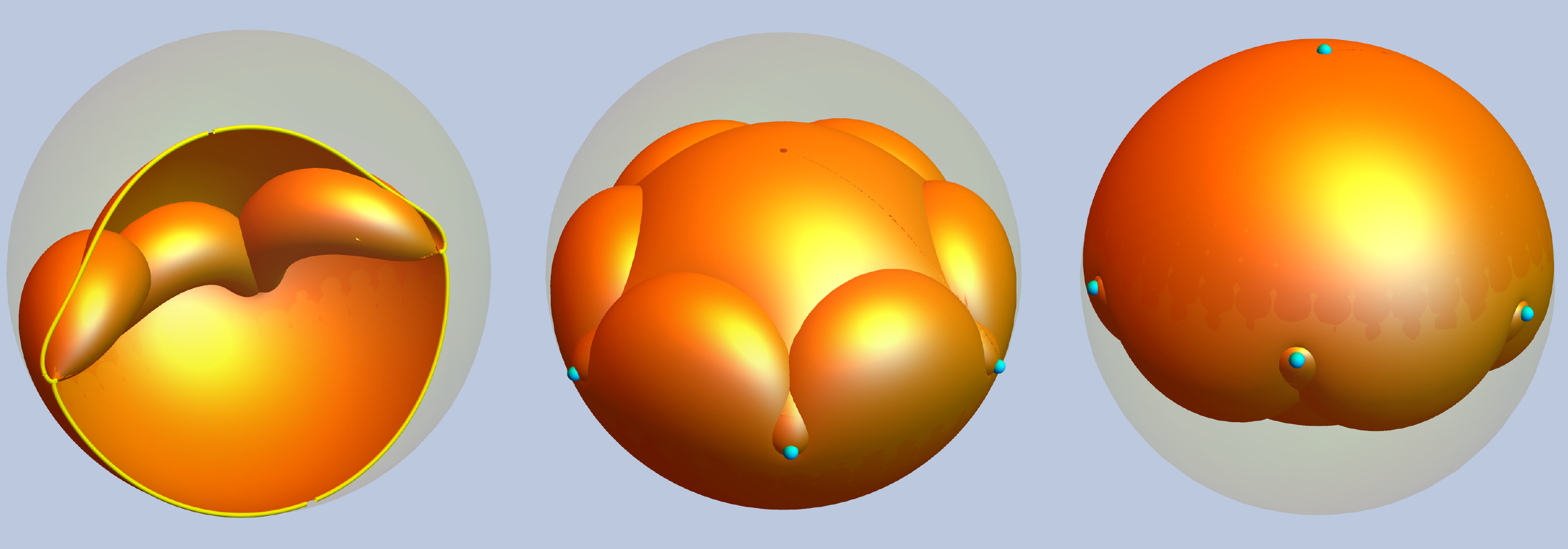}
\caption{\small{ A CMC 1 surface with seven smooth ends and symmetry group of order six: Goursat transform of the Bryant surface tamed by $f_{11}$. The points coloured in cyan are the ends.}}\label{FIGURA6}
\end{center}
\end{figure}

 \subsection{Isotropic curves with constant bending 
 } 
 
 Let $f_{\kappa}$ be an isotropic curve with constant bending $\kappa$. 
 We say that $f$ is of {\it regular type} if  $\kappa\neq 1, -16/9$ and of {\it exceptional type}, otherwise. 
 Let ${\mathcal D}_4$ be the dihedral transformation group of $\dot{\C}$ generated by $z\to 1/z$ and $z\to -z$. 
 Let $|[z]|$ be the equivalence class of $z\in \dot{\C}$ in $\dot{\C}/{\mathcal D}_4$. 
 Given $\kappa\in \C\setminus \{ 1,-16/9\}$, we choose $c$ such that $c^2=\kappa$. 
 The equivalence class of  $\sqrt{(5c-4\sqrt{c^2-1})(5c+4\sqrt{c^2-1})^{-1}}$ does not depend on the choice 
 of $c$ and of the square roots. This originates a map
 $$
  {\bf r}:\C\setminus \{ 1,-16/9\}\ni k\mapsto r_k=|[\sqrt{(5c-4\sqrt{c^2-1})(5c+4\sqrt{c^2-1})^{-1}}]|\in \dot{\C}/{\mathcal D}_4.
      $$ 
   The range of ${\bf r}$ is $\dot{\C}/{\mathcal D}_4\setminus \{|[3]|\}$.  For instance, ${\bf r}_{\kappa_q}=|[q]|$, as can be easily seen from \eqref{bendingw}. Note that $\dot{\Q}$ is ${\mathcal D}_4$-stable. Its projection to  $\dot{\C}/{\mathcal D}_4$ is denoted by $|[\dot{\Q}]|$.
   
   The main result of the section is the following.
 
 \begin{thmx}\label{thm:cpt-cc}
 Let $f:S\to \Q_3$ be a compact isotropic curve with constant bending $\kappa$. 
 Then, $f$ is of general type, ${\bf r}(\kappa)$ belongs to $|[\dot{\Q}]|$, and $f$ is dominated by $f_q$, 
 where $q$ is the unique element of ${\bf r}(\kappa)$ strictly greater than $1$.
\end{thmx}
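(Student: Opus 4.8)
The plan is to exploit the drastic simplification that constant bending produces in the structure equations. Recall from the proof of Theorem~\ref{thm:def-rig} that, on the general type locus, a (local) section $A$ of the reduced bundle ${\mathcal F}$ satisfies \eqref{eq1},
$A^{-1}dA = ({\bf e}^1_2+{\bf e}^2_4+{\bf e}^3_1-{\bf e}^4_3)\zeta + ({\bf e}^4_2 + \tfrac34({\bf e}^2_1-{\bf e}^3_4))\eta$,
with $\zeta,\eta$ holomorphic and $\zeta$ nowhere zero, and that the explicit construction in Proposition~\ref{existence} identifies $\delta=\zeta^4$ and $\gamma={\mathfrak d}(\delta)=\zeta\eta$. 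Hence $\kappa=\gamma^2/\delta=(\eta/\zeta)^2$. I note first that the hypothesis that the bending be a \emph{constant complex number} already forces $f$ to be of general type, since $\kappa$ is defined only when $\delta\not\equiv0$; this disposes of the first assertion. Moreover $\kappa$ constant makes $\eta/\zeta$ a meromorphic function with constant square, hence a constant $c$ with $c^2=\kappa$, so $\eta=c\,\zeta$ and the structure equation collapses to $A^{-1}dA=M(c)\,\zeta$ with the constant matrix $M(c)=({\bf e}^1_2+{\bf e}^2_4+{\bf e}^3_1-{\bf e}^4_3)+c({\bf e}^4_2+\tfrac34({\bf e}^2_1-{\bf e}^3_4))\in\mathfrak{sp}(2,\C)$.

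Integrating $dA=A\,M(c)\,\zeta$ along the pseudo-arc $\sigma=\int\zeta$ gives $A=A_0\exp(\sigma M(c))$, so that $f$ is, up to a left translation by $A_0\in\Sp(2,\C)$, a piece of the orbit of the one-parameter subgroup $\sigma\mapsto\exp(\sigma M(c))$ through $[{\bf e}_1\wedge{\bf e}_2]$. The next step is to compute the characteristic polynomial of $M(c)$, which I find to be $\lambda^4-\tfrac{5c}{2}\lambda^2+(\tfrac{9c^2}{16}+1)$; its eigenvalues are $\pm\mu_1,\pm\mu_2$ with $\mu_{1,2}^2=\tfrac{5c}{4}\pm\sqrt{c^2-1}$, so the ratio $\mu_1^2/\mu_2^2=(5c+4\sqrt{c^2-1})(5c-4\sqrt{c^2-1})^{-1}$ is exactly the reciprocal square of the quantity defining ${\bf r}$. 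Thus ${\bf r}(\kappa)=|[\mu_1/\mu_2]|$, reducing everything to the arithmetic of the eigenvalue ratio.

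The heart of the argument, and the step I expect to be the main obstacle, is to bring in compactness. Since $f$ is nonconstant and holomorphic on a compact surface, $f(S)$ is a compact, hence algebraic, irreducible curve, and by the previous paragraph it must be the closure of the translated orbit $A_0\cdot\{\exp(\sigma M(c))\cdot[{\bf e}_1\wedge{\bf e}_2]\}$. I would argue that this closure is a compact $1$-dimensional curve precisely when the Zariski closure of $\{\exp(\sigma M(c))\}$ is a one-dimensional torus, i.e. when $M(c)$ is regular semisimple with commensurable nonzero eigenvalues, $\mu_1/\mu_2\in\Q$: if $\mu_1/\mu_2\notin\Q$ the orbit closure is two-dimensional and cannot contain the image of $f$, while a repeated eigenvalue or a zero eigenvalue introduces polynomial factors in $\exp(\sigma M(c))$ and destroys compactness. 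Checking $\mu_{1,2}^2=\tfrac{5c}{4}\pm\sqrt{c^2-1}$ shows that these two degeneracies occur exactly for $\kappa=1$ and $\kappa=-16/9$; hence compactness simultaneously rules out the exceptional type and yields ${\bf r}(\kappa)=|[q]|\in|[\dot\Q]|$ with $q=\mu_1/\mu_2\in\dot\Q$. The delicate point is reconciling the monodromy of the developing map $\sigma=\int\zeta$ around the punctures in $|\Delta_f|\cup|\Delta_{f^\sharp}|$ and around the handles of $S$ with single-valuedness of $f$; it is this constraint that ultimately enforces commensurability.

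Finally, diagonalizing $M(c)$ exhibits the orbit as the orbit of a diagonal one-parameter subgroup with eigenvalue ratio $q$, which, after comparison with \eqref{w} (whose Plücker coordinates are monomials $z^{\alpha}$ with exponents proportional to $m,n$), is congruent to the isotropic $W$-curve $f_q$. Normalizing $q$ to the unique representative of $|[q]|={\bf r}(\kappa)$ strictly greater than $1$ under the ${\mathcal D}_4$-action, \eqref{bendingw} gives $\kappa=\kappa_q$. Thus $f(S)=X\cdot f_q(\CP^1)$ for some $X\in\Sp(2,\C)$; since $f_q$ is generically one-to-one, $X\circ f_q$ is a biholomorphism onto its image, and $h=(X\circ f_q)^{-1}\circ f:S\to\CP^1$ is holomorphic with $f=X\cdot f_q\circ h$. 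This is exactly the statement that $f$ is dominated by $f_q$ (consistently with Theorem~\ref{thm:equivalence}, since then $h^*\delta_q=\delta$ and $h^*\gamma_q=\gamma$), completing the plan.
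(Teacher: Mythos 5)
Your overall strategy coincides with the paper's: constant bending collapses the adapted Maurer--Cartan form to $A^{-1}dA=M(c)\,\zeta$ with $c^2=\kappa$, one develops $f$ onto the orbit of the one-parameter group $\exp(\sigma M(c))$, and the eigenvalue ratio of $M(c)$ recovers ${\bf r}(\kappa)$; your identification of the exceptional values $\kappa=1,-16/9$ with the degenerate (non-regular-semisimple) cases of $M(c)$ is correct and matches the paper's Lemma on the rational case, including the final passage from $f(S)\subseteq X\cdot f_q(\CP^1)$ to a domination map $h$.

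The genuine gap is exactly where you flag ``the delicate point'': ruling out $\mu_1/\mu_2\notin\Q$ and the exceptional type. The assertion that when the ratio is irrational ``the orbit closure is two-dimensional and cannot contain the image of $f$'' is not a proof --- a two-dimensional closure can perfectly well contain a compact analytic curve, and in the exceptional case the model orbit is an \emph{embedded} copy of $\C$, whose closure could a priori be a compact curve; saying the degeneracy ``destroys compactness'' begs the question. The paper closes this by a genuinely local argument that you do not supply: by the maximum principle $f(S)$ must meet $\Q_3\setminus\mathcal U$, necessarily at a point $p_0$ of the exceptional divisor; writing $f$ near $p_0$ in the affine chart with a pole of order $\ell$ and comparing, on a small circle around $p_0$, the monodromy of the induced map $h$ with the quasi-periodicity forced by the relations $z^{-\ell}m^i_j=(\mathrm{const})\,e^{2\lambda_k h}$, one extracts in the irrational case the identity $2\pi i(\ell+n_2)=2\pi i\,\ell r$ with $r\notin\Q$ (absurd), and in the exceptional case an equality between a bounded and an unbounded function near $p_0$. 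Without this (or an equivalent) argument the second and third assertions of the theorem are unproved. A smaller point: $X\circ f_q$ is not a biholomorphism onto its image at the branch points $0,\infty$, so holomorphicity of $h$ there does not follow from inverting $f_q$; one needs, as in the paper, the explicit monomial form of $f_q$ to see that a power of $h$ is holomorphic and bounded, whence the singularity of $h$ is removable.
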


\noindent The proof of Theorem \ref{thm:cpt-cc} is based on the following three lemmas. 

 \begin{lemma}\label{lemma1}
 Let $f:S\to \Q_3$ be a compact isotropic curve of general type with constant bending $\kappa$.
 If ${\bf r}_{\kappa} \in |[\Q]|$, then $f$ is dominated by the isotropic $W$-curve $f_q$, where $q$ is 
 the unique element of ${\bf r}_{\kappa}$ strictly greater than $1$.
\end{lemma}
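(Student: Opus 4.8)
The plan is to verify the two pull-back conditions of Theorem \ref{thm:equivalence} for the candidate $f_q$, after using the constant-bending hypothesis to normalize the structure equations of $f$. Working with the reduced $\mathrm{Z}_8$-frame $\mathcal F$ of Theorem \ref{thm:FourthReduction}, I would recall from \eqref{eq1} that along $f$ one has $A^{-1}dA = ({\bf e}^1_2 + {\bf e}^2_4 + {\bf e}^3_1 - {\bf e}^4_3)\,\zeta + ({\bf e}^4_2 + \tfrac34({\bf e}^2_1 - {\bf e}^3_4))\,\eta$, with $\delta_f = \zeta^4$ and $\mathfrak d(\delta_f) = \zeta\eta$. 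Hence $\kappa = \mathfrak d(\delta_f)^2/\delta_f = (\eta/\zeta)^2$, so constancy of $\kappa$ forces $\eta = c\,\zeta$ with $c^2 = \kappa$. The Maurer--Cartan form then collapses to $A^{-1}dA = M\,\zeta$ for the fixed matrix $M = ({\bf e}^1_2 + {\bf e}^2_4 + {\bf e}^3_1 - {\bf e}^4_3) + c({\bf e}^4_2 + \tfrac34({\bf e}^2_1 - {\bf e}^3_4)) \in \mathfrak{sp}(2,\C)$; equivalently, up to a fixed element of $\Sp(2,\C)$, the curve is the orbit $u \mapsto [\exp(uM){\bf e}_1 \wedge \exp(uM){\bf e}_2]$ reparametrized by a primitive $u = \int\zeta$. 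This is exactly the local structure of an isotropic $W$-curve, which is itself such an orbit.

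Next, since ${\bf r}_\kappa = |[q]|$ means $\kappa = \kappa_q$, I would observe that it suffices to produce a holomorphic map $h\colon S \to \C\cup\{\infty\}$ with $h^*(\mathfrak d(\delta_q)) = \mathfrak d(\delta_f)$. Indeed $\mathfrak d(\delta_q)^2 = \kappa_q\,\delta_q$ and $\mathfrak d(\delta_f)^2 = \kappa\,\delta_f$ by definition of the bending, so with $\kappa = \kappa_q \neq 0$ the quartic condition $h^*\delta_q = \delta_f$ follows automatically, and Theorem \ref{thm:equivalence} then yields $f = X\cdot f_q\circ h$. Writing $q=m/n$ and $\mathfrak d(\delta_q) = \gamma_q = \tfrac{2(m^2+n^2)}{5z^2}\,dz^2$, one finds $h^*\gamma_q = \tfrac{2(m^2+n^2)}{5}(d\log h)^2$, so imposing $h^*\gamma_q = \mathfrak d(\delta_f) = c\,\zeta^2$ reduces the whole problem to the single scalar equation $d\log h = \lambda\,\zeta$, where $\lambda^2 = \dfrac{5c}{2(m^2+n^2)}$, whose formal solution is $h = \exp\!\big(\lambda\!\int\zeta\big)$.

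The real content of the lemma is therefore that this $h$ is a genuine \emph{single-valued meromorphic} function on the compact surface $S$. First, $\zeta^2 = c^{-1}\mathfrak d(\delta_f)$ is a globally defined meromorphic quadratic differential whose zeros and poles (at heptactic points and at the ends) all have even order, since $\zeta$ is itself locally a meromorphic $1$-form; thus $\zeta$ is determined up to a sign and possibly a $\pm1$-monodromy, which corresponds to the interchange $h \leftrightarrow h^{-1}$, i.e. to the $\mathcal D_4$-symmetry $z\mapsto 1/z$ under which $f_q$ is invariant modulo $\Sp(2,\C)$ --- so this ambiguity is harmless and fixes the choice of the representative $q>1$. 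Single-valuedness of $h = \exp(\lambda\int\zeta)$ then amounts to requiring that the residues of $\lambda\zeta$ at its poles be integers and that its periods over $H_1(S)$ lie in $2\pi i\,\Z$. I would establish both by comparing the monodromy of the orbit representation $\exp(M\!\int\zeta)$ with the global existence of $f$ on the \emph{compact} surface: around any loop or puncture, $\exp(M\oint\zeta)$ must stabilize the orbit, and the hypothesis $q\in\Q$ makes the eigenvalues of $M$ commensurable, converting these holonomy constraints precisely into the stated integrality. Concretely, for $f_q$ the residues are the integer exponents $(1+\varepsilon_q)m$, $\tfrac12(m+n)(1+\varepsilon_q)$, $n(1+\varepsilon_q)$ appearing in \eqref{w}, and the general case is pinned to the same lattice.

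This last, global step is the main obstacle: the local reduction to a one-parameter orbit and the matching of the differentials are essentially formal, whereas promoting the formal primitive $\exp(\lambda\int\zeta)$ to an honest meromorphic map on compact $S$ is exactly where both compactness and the rationality of $q$ are indispensable. Once $h$ is known to be meromorphic, the identity $h^*\gamma_q = \mathfrak d(\delta_f)$ holds by construction, $h^*\delta_q = \delta_f$ follows as above, and Theorem \ref{thm:equivalence} gives that $f$ is dominated by $f_q$, completing the proof.
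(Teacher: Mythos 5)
Your overall strategy (normalize the Maurer--Cartan form using constant bending, then exhibit a holomorphic $h$ with $h^*(\delta_q)=\delta_f$ and $h^*(\mathfrak d\delta_q)=\mathfrak d(\delta_f)$ and invoke Theorem~\ref{thm:equivalence}) is legitimate, and your reductions of the pullback conditions to the single equation $d\log h=\lambda\zeta$ are correct. But the step you yourself identify as ``the real content'' --- promoting $\exp(\lambda\int\zeta)$ to a single-valued meromorphic function on $S$ --- is not actually proved, and the criterion you offer for it does not work. You argue that around any loop the element $\exp(M\oint\zeta)$ ``must stabilize the orbit''; but every element of the one-parameter group $\exp(uM)$ maps its own orbit to itself, so this condition is vacuous and yields no integrality constraint on the periods. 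What is actually needed is that the period element fixes the orbit \emph{pointwise}, and the only way to extract that from the global existence of $f$ is to use the injectivity of $f_q$: from $f\circ{\mathtt p}=X\cdot f_q\circ w$ on the universal cover and the deck-invariance of $f\circ{\mathtt p}$ one gets $f_q(w(\gamma\cdot p))=f_q(w(p))$, and \emph{injectivity of $f_q$} then forces $w\circ\gamma=w$, i.e.\ $w$ descends. Your proposal never invokes injectivity of $f_q$, which is the ingredient that closes the monodromy argument; without it the ``integrality of residues and periods'' remains an assertion.

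This is essentially how the paper proceeds, and it also handles a second point you leave untreated: your $h$ is a priori defined only on $S$ minus the exceptional set $|\Delta_f|\cup|\Delta_{f^\sharp}|\cup|\Delta_{\delta}|$, whereas Theorem~\ref{thm:equivalence} needs $h$ holomorphic (as a map to $\CP^1$) on all of $S$. The paper first integrates the reduced structure equations on the universal cover of $S_\sharp$ via the Cartan--Darboux theorem to obtain the image inclusion $f(S)\subseteq X\cdot f_q(\CP^1)$, defines $h=f_q^{-1}\circ X^{-1}\circ f$ globally (single-valuedness being free from injectivity of $f_q$), and then extends $h$ across the preimages of $[{\bf e}_1\wedge{\bf e}_2]$ and $[{\bf e}_3\wedge{\bf e}_4]$ by an explicit local computation with the parametrization \eqref{w}, showing e.g.\ that $h^{(1+\varepsilon_q)m}$, $h^{(1+\varepsilon_q)(m+n)/2}$, $h^{(1+\varepsilon_q)n}$ are holomorphic and vanishing there, whence $h$ itself is holomorphic. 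To repair your write-up you would need to (i) replace the orbit-stabilization argument by the descent-via-injectivity argument (or an equivalent), and (ii) add the local extension of $h$ across the exceptional points.
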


\begin{proof}[Proof of Lemma \ref{lemma1}]
We start by recalling the following two facts about isotropic $W$-curves.
\begin{itemize}

\item $f_q: \CP^1\to \Q_3$ is injective with at most two branch points at $0$ and $\infty$, with $f_q(0)=[{\bf e}_1\wedge {\bf e}_2]$ and 
$f_q(\infty)=[{\bf e}_3\wedge {\bf e}_4]$.

\item There exists a holomorphic frame field $A_q:\CP^1\setminus \{0,\infty\}\to \Sp(2,\C)$, such that
\begin{equation}\label{fr1}
A_q^{-1}dA_q =\frac{i}{\sqrt{10}}(s({\bf e}^1_2+{\bf e}^2_4+
{\bf e}^3_1-{\bf e}^4_3)-4(m^2+n^2)s^{-1}({\bf e}^4_2+\frac{3}{4}{\bf e}^2_1-\frac{3}{4}{\bf e}^3_4))\frac{dz}{z},
      \end{equation}
where $s$ is a fourth root of $-9m^4+82m^2n^2-9n^4$.
\end{itemize}
Next, we prove that there exists $X\in \Sp(2,\C)$ such that $f(S)\subseteq X\cdot f_q(\CP^1)$. Put
$c=-4i(m^2+n^2)s^{-2}$. Notice that $c^2=\kappa$.
Let $S_{\sharp}=S\setminus |\Delta_f|\cup|\Delta_{f^{\sharp}}|\cup |\Delta_{\delta_f}|$. Consider the universal covering
${\mathtt p}:\hat{S}_{\sharp}\to S_{\star}$. By construction, $\hat{S}_{\sharp}\not\cong\CP^1$.
Hence, $\hat{S}_{\sharp}$ is contractible. From Lemma \ref{projreduction}, 
$f\circ {\mathtt p}$ admits a lift $A:\hat{S}_{\sharp}\to \Sp(2,\C)$ such that
\begin{equation}\label{fr2}
  \begin{cases}A^{-1}dA =\big(({\bf e}^1_2+{\bf e}^2_4+{\bf e}^3_1-{\bf e}^4_3)-c({\bf e}^4_2+\frac{3}{4}{\bf e}^2_1-
\frac{3}{4}{\bf e}^3_4)\big)\zeta,\quad 1<q<3,\\
A^{-1}dA =i\big(({\bf e}^1_2+{\bf e}^2_4+{\bf e}^3_1-{\bf e}^4_3)-c({\bf e}^4_2+\frac{3}{4}{\bf e}^2_1-
\frac{3}{4}{\bf e}^3_4)\big)\zeta,\quad  q>3,
    \end{cases}
    \end{equation}
where $\zeta$ is a nonzero holomorphic 1-form. Let $w:\hat{S}_{\sharp}\to \dot{\C}$ be defined by 
$$
     w={\rm exp}\left((-i)^{\frac{1-{\rm sign}(q-3)}{2}}10^{1/2}s^{-1}\int \zeta\right).
         $$
From \eqref{fr1} and \eqref{fr2}, we have $A^{-1}dA=w^*(A_q^{-1}dA_q)$. Then, there exists $X\in \Sp(2,\C)$, such that 
$A=X\cdot A_q\circ w$. 
This implies $f\circ {\mathtt p}= X\cdot f_q\circ w$. Hence, $f(S_{\sharp})\subseteq X\cdot f_q(\CP^1)$. 
By continuity, $f(S)\subseteq X\cdot f_q(\CP^1)$. 
Possibly replacing $f$ with $X^{-1}f$, we can assume $f(S)\subseteq f_q(\CP^1)$. Since $f_q$ is injective, there exists
 a unique map $h:S\to \CP^1$, such that $f=f_q\circ h$. 
 Let $S_{\star} =
 S\setminus f^{-1}([{\bf e}_1\wedge {\bf e}_2])\cup f^{-1}([{\bf e}_3\wedge {\bf e}_4])$.
Taking into account that $f_q|_{\CP^1\setminus \{0,\infty\}}$ is immersive, then $h|_{S_{\star}}$ is holomorphic. 
Consider a point $p_0$ in $f^{-1}([{\bf e}_1\wedge {\bf e}_2])$. 
In an open neighborhood $U$ of $p_0$, we have 
$$
   f|_U=[({\bf e}_1+f_1^3{\bf e}_3+f_1^4{\bf e}_4)\wedge ({\bf e}_2+f^4_1{\bf e}_3+f_2^4{\bf e}_4)],
       $$ 
where $f^i_j$ are holomorphic on $U$ and $f^i_j (p_0)=0$. From \eqref{w}, we have
$$
   f^3_1=-\frac{m+n}{m-n}h^{(1+\varepsilon_q) m},\quad f^4_1=-\frac{2i\sqrt{mn}}{m-n}h^{\frac{(1+\varepsilon_q)(m+n)}{2}},\quad
     f^4_2=\frac{m+n}{m-n}h^{(1+\varepsilon_q) n}.
     $$
 Then $h$ is holomorphic on $U$. The same argument shows that $h$ is holomorphic also on a neighborhood of every point of 
 $f^{-1}([{\bf e}_3\wedge {\bf e}_4])$. This concludes the proof of Lemma \ref{lemma1}.
\end{proof}

 \begin{lemma}\label{lemma2}
There are no compact isotropic curve of general type with constant bending $\kappa$ such that ${\bf r}_{\kappa}\notin |[\Q]|$.
\end{lemma}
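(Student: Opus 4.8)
The plan is to argue by contradiction, exploiting that constant bending turns the normalized structure equation into one with \emph{constant} coefficients: this forces $f$ to be (a reparametrization of) the orbit of a one-parameter subgroup of $\Sp(2,\C)$, hence a $W$-curve, and a $W$-curve has compact (algebraic) closure only when the eigenvalue ratios of its infinitesimal generator are rational. This last condition will turn out to be exactly ${\bf r}_\kappa\in|[\Q]|$, giving the contradiction.

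First I would suppose such an $f:S\to\Q_3$ exists. Since ${\bf r}_\kappa$ is defined we have $\kappa\neq 1,-16/9$, so $f$ is of regular type. Exactly as in the proof of Lemma \ref{lemma1}, constant bending together with Lemma \ref{projreduction} yields, on the universal covering ${\mathtt p}:\hat S_\sharp\to S_\sharp$, a lift $A:\hat S_\sharp\to\Sp(2,\C)$ with
\[
A^{-1}dA=N\,\zeta,\qquad N=({\bf e}^1_2+{\bf e}^2_4+{\bf e}^3_1-{\bf e}^4_3)-c\big({\bf e}^4_2+\tfrac34{\bf e}^2_1-\tfrac34{\bf e}^3_4\big),
\]
where $c^2=\kappa$ and $\zeta$ is a nowhere-zero holomorphic $1$-form (compare \eqref{fr2}; the factor $i$ in the second branch may be absorbed into $\zeta$). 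Because $N$ is constant and $\hat S_\sharp$ is simply connected, $A=A_0\exp(N\Phi)$ with $\Phi=\int\zeta$ a nonconstant holomorphic primitive. Hence $f\circ{\mathtt p}=A_0\cdot g\circ\Phi$, where $g(t)=[\exp(Nt){\bf e}_1\wedge\exp(Nt){\bf e}_2]$ is the orbit of the one-parameter subgroup $\{\exp(Nt)\}\subset\Sp(2,\C)$ through $[{\bf e}_1\wedge{\bf e}_2]$. Consequently $f(S_\sharp)\subseteq A_0\cdot g(\C)$, and since $S$ is compact, $f(S)$ is a compact, hence algebraic, irreducible curve in $\Q_3\subset\CP^4$ which is precisely the closure of the $W$-curve $g$.

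Next I would compute the eigenvalues of $N$: its characteristic polynomial is $\lambda^4+\tfrac{5c}{2}\lambda^2+\tfrac{9c^2+16}{16}$, so the eigenvalues are $\pm\lambda_1,\pm\lambda_2$ with $\lambda_{1,2}^2=\tfrac14\big(-5c\pm4\sqrt{c^2-1}\big)$. For regular type these four values are distinct and nonzero, so $N$ is diagonalizable; in the eigenbasis the Pl\"ucker coordinates of $g(t)$ are proportional to $e^{(\mu_i+\mu_j)t}$ with $\mu_i,\mu_j\in\{\pm\lambda_1,\pm\lambda_2\}$ (equivalently, one may work with the Legendre associate $f^\sharp$, whose exponents are the eigenvalues themselves). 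A direct computation gives $(\lambda_1/\lambda_2)^2=(5c-4\sqrt{c^2-1})(5c+4\sqrt{c^2-1})^{-1}$, so that $|[\lambda_1/\lambda_2]|={\bf r}_\kappa$, the indexing and sign ambiguities of the eigenvalues being exactly the $\mathcal D_4$-action. The closure of $g$ is an algebraic curve if and only if all ratios of the exponents are rational, equivalently $\lambda_1/\lambda_2\in\Q$, equivalently ${\bf r}_\kappa\in|[\Q]|$. Since $f(S)$ is compact algebraic, this forces ${\bf r}_\kappa\in|[\Q]|$, contradicting the hypothesis; hence no such curve exists.

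The main obstacle is the passage from ``$f(S)$ is compact algebraic'' to ``$\lambda_1/\lambda_2\in\Q$'': one must invoke the classical Klein--Lie criterion (cf. \cite{RB,Ch-S4}) that the closure of a one-parameter orbit in projective space is algebraic precisely when the eigenvalue ratios of its generator are rational, and verify that the nonconstant holomorphic reparametrization $\Phi$, together with the $\mathcal D_4$-ambiguity in labelling the eigenvalues, leaves this conclusion unaffected. The eigenvalue computation and the identification of the exponent ratio with ${\bf r}_\kappa$ are routine but must be normalized to agree with \eqref{bendingw}; one should also note that general type guarantees the relevant Pl\"ucker coordinates of $g$ are nonzero, so that all four exponents genuinely occur.
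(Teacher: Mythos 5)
Your proposal is correct in substance but follows a genuinely different route from the paper. The paper also begins by integrating the constant-coefficient structure equation to land in (a translate of) the explicit model $\hat f_\kappa$, but from there it argues analytically: it observes that $f(S)$ must meet $\Q_3\setminus\mathcal U$ (else $f$ would be constant by the maximum principle), picks such a point $p_0$, which necessarily lies in the exceptional set $D$, writes the pole expansion of $f$ near $p_0$, and matches it against the exponentials $e^{2\lambda_1 h},e^{(\lambda_1+\lambda_2)h},e^{2\lambda_2 h}$ on a small circle around $p_0$; the periodicity and quasi-periodicity of the resulting arguments force $2\pi i(\ell+n_2)=2\pi i\ell r$ with $r=\lambda_2/\lambda_1\notin\Q$, a contradiction. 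You instead invoke Chow's theorem to make $f(S)$ an irreducible algebraic curve, identify it with the closure of the one-parameter orbit, and appeal to the classical criterion that such a closure is an algebraic curve only when the exponent ratios are rational. This is cleaner and more conceptual, but note two points. First, the identification $f(S)=\overline{A_0\, g(\C)}$ needs a sentence: $g^{-1}\bigl(A_0^{-1}f(S)\bigr)$ is an analytic subset of $\C$ containing the nonempty open set $\Phi(\hat S_\sharp)$, hence all of $\C$, so $A_0\,g(\C)\subseteq f(S)$ and equality follows by density of $f(S_\sharp)$. Second, the converse direction of the Klein--Lie criterion (algebraic closure implies commensurable exponents) is exactly the content that the paper proves by hand with its periodicity computation, so you should not leave it as a bare citation; the quickest rigorous version is the torus argument: the affine exponents span a $\Q$-vector space of dimension $r\ge 2$ when $\lambda_1/\lambda_2\notin\Q$, so the Zariski closure of the translated one-parameter subgroup in the ambient torus has dimension $r\ge 2$, contradicting its containment in the $1$-dimensional curve $f(S)$. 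With these two steps written out (and your verification that the relevant Pl\"ucker coefficients are nonzero and the four eigenvalues of $N$ are distinct for $\kappa\neq 1,-16/9$, both of which are correct), your argument is a complete and valid alternative proof.
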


\begin{proof} 
Consider the maximal abelian subgroup $\mathbb T^2 \subset \Sp(2,\C)$,
$$
   {\mathbb T}^2=\{u^{-1}{\bf e}^1_1+v^{-1}{\bf e}^2_2+u{\bf e}^3_3+v{\bf e}^4_4 \mid u,\,v\in \dot{\C}\}.
   $$ 
%
Let $\hat {\mathcal U}\subset {\mathcal U}$ be the subset defined by 
  $$ 
 \hat {\mathcal U}=\{[({\bf e}_1+ m^3_1{\bf e}_3 +m^4_1{\bf e}_4)
 \wedge ({\bf e}_2 +m^4_1{\bf e}_3+ m^4_2{\bf e}_4)] \mid m^3_1,m^4_1,m^4_2\neq 0, \,m^3_1 m^4_2 \neq (m^4_1)^2 \}. 
  $$
Then, $\hat {\mathcal U}$ is ${\mathbb T}^2$-stable, the action of ${\mathbb T}^2$ 
has no fixed points
and  
$$\Sigma=\{P_r \mid P_r=[({\bf e}_1+{\bf e}_3+{\bf e}_4)\wedge ({\bf e}_2+{\bf e}_3+r{\bf e}_4)],\, r\neq 0,1\}$$
is a slice. Let ${\mathcal O}_r$ be the ${\mathbb T}^2$-orbit through $P_r$. The orbits are the integral manifolds of 
the holomorphic completely integrable plane field distribution generated by the two fundamental vector fields of the action. 
In addition, ${\mathcal O}_r$  inherits a conformal structure from $\Q_3$. Let 
${\mathcal N}^+_r$ and ${\mathcal N}^+_r$ the two null distributions of ${\mathcal O}_r$. They can be viewed as completely 
integrable real distributions of rank 2.
\vskip0.1cm

\noindent Let $\kappa\in \C$, $\kappa\neq  1,-16/9$. Choose $c$ such that $c^2=\kappa$. 
Define $\lambda_1$ and $\lambda_2$ by $\lambda_1=\frac12(5c-4(c^2-1)^{1/2})^{1/2}$ and 
$\lambda_2=\frac12(5c+4(c^2-1)^{1/2})^{1/2}$. Observe that $\lambda_1\neq \lambda_2$ and  
$|[\lambda_1/\lambda_2]|={\mathbf r}_{\kappa}$. Let ${\bf v}_{\kappa},{\bf w}_{\kappa}:\C\to \C^4$ be defined by
\[
 \begin{split}{\bf v}_{\kappa} &= {\bf e}_1-\frac{\lambda_1+\lambda_2}{\lambda_1-\lambda_2}e^{2\lambda_1 z}{\bf e}_3-
\frac{2i\lambda_1^{1/2} \lambda_2^{1/2}}{\lambda_1-\lambda_2}e^{(\lambda_1+\lambda_2)z}{\bf e}_4,\\
{\bf w}_{\kappa}&={\bf e}_2-
\frac{2i\lambda_1^{1/2} \lambda_2^{1/2}}{\lambda_1-\lambda_2}e^{(\lambda_1+\lambda_2)z}{\bf e}_3+
\frac{\lambda_1+\lambda_2}{\lambda_1-\lambda_2}e^{2\lambda_2 z}{\bf e}_4.
\end{split}
   \]
Then, 
\begin{equation}\label{constben}\hat{f} _{\kappa}=[{\bf v}_{\kappa}\wedge {\bf w}_{\kappa}]:\C\to \hat{\mathcal U}\end{equation}
 is an isotropic curve  with constant bending $\kappa$. If $\lambda_1/\lambda_2$ is not rational, $\hat f_{\kappa}$ is injective and $\hat f_{\kappa}(\C)$ is contained in the orbit ${\mathcal O}_r$, $r=(\lambda_1+\lambda_2)^2/4\lambda_1\lambda_2$. Thus, $\hat f_{\kappa}$ is an integral manifold of one of the two null distributions of ${\mathcal O}_r$.
\vskip0.1cm

\noindent We are now in a position to conclude the proof of Lemma \ref{lemma2}. By contradiction,  let $f:S\to \Q_3$ be a compact isotropic curve with constant bending $\kappa$, with $\kappa \neq 1,-16/9$ and ${\bf r}_{\kappa}\notin |[\Q]|$. 
Consider $S_{\star}=S\setminus D$, $D=|\Delta_f|\cup |\Delta_{f^{\sharp}}|\cup |\Delta_{\delta}|$. 
Since $f$ and $\hat f_{\kappa}$ have the same constant bending,
proceeding as in the proof of Lemma \ref{lemma1}, 
there exists $X\in \Sp(2,\C)$, such that 
$f(S_{\star})\subseteq X\cdot f_{\kappa}(\C)$. Possibly replacing $f$ with $X^{-1}f$, we assume 
$f(S_{\star})\subseteq f_{\kappa}(\C)$.  If  ${\bf r}_{\kappa}\notin |[\Q]|$,
$f_{\kappa}$ is an integral manifold of a completely integrable distribution of ${\mathcal O}_r$. 
Thus (cf. \cite[Theorem 1.62, p. 47]{Wa}) there exists a differentiable map $h:S_{\star}\to \C$, such that $f|_{S_{\star}}=\hat f_{\kappa}\circ h$.  Since $f$ and $f_{\kappa}$ are holomorphic, also $h$ is holomorphic. 
Note that $f(S)\cap (\Q_3\setminus \mathcal{U})\neq \emptyset$. 
Otherwise, $f(S)\subset {\mathcal U}$ and ${\mathcal U}$ is biholomorphically equivalent to $\C^3$. 
Hence, by the maximum principle, $f$ would be constant. Next, pick $p_0\in S$ such that 
$f(p_0)\notin {\mathcal U}$.  The point $p_0$ belongs to $D$. In fact, if $p_0\notin D$, we would have 
$f(p_0)\in f(S_{\star})\subseteq f_{\kappa}(\C)\subset {\mathcal U}$. Choose a complex chart $(U,z)$ centered at $p_0$, such that 
$D\cap U=\{p_0\}$. The next argument is of a local nature, so we can think of $U$ as an open disk of the complex plane and 
take $p_0=0$. 
There exist a positive integer $\ell$ and holomorphic functions $m^3_1$,  $m^4_1$,  $m^3_2 :U\to \C$, not all vanishing 
at the origin and not identically $0$ on $U$, 
such that
$$
   f|_{\dot{U}}=[({\bf e}_1+z^{-\ell}(m^3_1{\bf e}_3+m^4_1{\bf e}_4))\wedge ({\bf e}_2+z^{-\ell}(m^4_1{\bf e}_3+m^4_2{\bf e}_4))],
        $$
where  $\dot{U}=U\setminus \{p_0\}$. Therefore, on $\dot{U}$ we have
\begin{equation}\label{ffr1}\begin{cases}
z^{-\ell} m^3_1 = -\frac{\lambda_1+\lambda_2}{\lambda_1-\lambda_2}e^{2\lambda_1 h(z)},\\
 z^{-\ell} m^4_1 =-\frac{2i\lambda_1^{1/2} \lambda_2^{1/2}}{\lambda_1-\lambda_2}e^{(\lambda_1+\lambda_2)h(z)},\\
 z^{-\ell} m^4_2=\frac{\lambda_1+\lambda_2}{\lambda_1-\lambda_2}e^{2\lambda_2 h(z)}.
\end{cases}\end{equation}
Next, we use \eqref{ffr1} to get a contradiction. At least one of the functions $m^i_j$ is nonzero at the origin. 
Suppose $m^3_1(0)\neq 0$. Possibly taking a smaller neighborhood, we assume that $m^3_1$ is never zero. 
So we can write $m^3_1=e^{\alpha^3_1+i\beta^3_1}$ and $h=a+ib$, where $\alpha^3_1$, $\beta^3_1$, $a$, $b$ are real valued. 
Consider a small circle ${\rm C}_{\rho}=\{e^{\rho+it} \mid t\in \R\}$, such that the functions $m^4_1$ and $m^4_2$ 
are never zero on ${\rm C}_{\rho}$. Hence, we have
\begin{equation}\label{ffr2}
\begin{split}
m^3_1(e^{\rho+it}) & =e^{\alpha^3_1(t)+i\beta^3_1(t)},\quad m^4_1(e^{\rho+it})=e^{\alpha^4_1(t)+i\beta^4_1(t)},\\
m^4_2(e^{\rho+it}) & =e^{\alpha^4_2(t)+i\beta^4_2(t)},\quad   h(e^{\rho+it})=e^{\alpha(t)+i\beta(t)},
\end{split}
\end{equation}
where $\alpha,\alpha^3_1$, $\alpha^4_1$, $\alpha^4_1$, $\beta^3_1,\beta$ are periodic function of period $2\pi$ and 
$$ 
     \beta^4_1(t+2\pi)=\beta^4_1+2\pi n_1,\quad \beta^4_2(t+2\pi)=\beta^4_1+2\pi n_2,\quad n_1,n_2\in \Z.
          $$
From \eqref{ffr1} and \eqref{ffr2}, we have
\begin{equation}\label{ffr3}\begin{split}
2\lambda_1(\alpha(t)+i\beta(t))=\ell\rho +\alpha^3_1(t)+i(\ell t+\beta^3_1(t) +2\pi \hat n_1-\pi) -c_1,\\
  2\lambda_2(\alpha(t)+i\beta(t))=\ell\rho +\alpha^4_2(t)+i(\ell t+\beta^4_2(t) +2\pi \hat n_2-\pi) -c_2,
  \end{split}\end{equation}
where $\hat n_1$, $\hat n_2$ are two integers and $c_1,c_2$ are the constants defined by 
$e^{c_1}=(\lambda_1+\lambda_2)(\lambda_1-\lambda_2)^{-1}$ and 
$e^{c_2}=\sqrt{\lambda_1}\sqrt{\lambda_2}(\lambda_1-\lambda_2)^{-1}$. 
Since $|[\lambda_2/\lambda_1]|\in {\bf r}_{\kappa}\notin |[\Q]|$, we have $\lambda_2=r\lambda_1$, $r\notin \Q$.
From \eqref{ffr3}, we get
\begin{equation}\label{ffr4}
\begin{split}
\ell\rho +\alpha^4_2(t)+i(\ell t+\beta^4_2(t) &+2\pi \hat n_2-\pi) -c_2
 =\\& r(\ell\rho +\alpha^3_1(t)+i(\ell t+\beta^3_1(t) +2\pi \hat n_1-\pi) -c_1).
\end{split}
\end{equation}
Taking into account the periodicity and the quasi-periodicity of the functions $\alpha^i_j$ and $\beta^i_j$ and using \eqref{ffr4},
we conclude that $2\pi i(\ell + n_2)=2\pi i \ell r$, which is a contradiction.
With the same argument, one can show that assuming $m^4_2(0)\neq 0$, or $m^4_1(0)\neq 0$, 
would yield to a contradiction as well.
\end{proof}

 \begin{lemma}\label{lemma3}
 There are no compact isotropic curve of exceptional type with constant bending.
\end{lemma}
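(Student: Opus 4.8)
The plan is to reduce $f$ to an explicit homogeneous model and exploit the degeneracy of that model's one–parameter group. Since the bending of $f$ is defined, $f$ is of general type, so by the normalization \eqref{fr2} (obtained from \eqref{eq1} by imposing constant bending), on the universal covering ${\mathtt p}:\tilde S\to S_\star$ of $S_\star=S\setminus(|\Delta_f|\cup|\Delta_{f^\sharp}|\cup|\Delta_\delta|)$ there is a lift $A=A_0\exp(wM)$, where $w=\int\zeta$ for a nowhere–zero holomorphic $1$–form $\zeta$, $c^2=\kappa$, and
$$M=({\bf e}^1_2+{\bf e}^2_4+{\bf e}^3_1-{\bf e}^4_3)-c({\bf e}^4_2+\tfrac34{\bf e}^2_1-\tfrac34{\bf e}^3_4).$$
Thus $f\circ{\mathtt p}=A_0\cdot\hat f_\kappa(w)$ with $\hat f_\kappa(w)=[\exp(wM){\bf e}_1\wedge\exp(wM){\bf e}_2]$. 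A direct computation gives the characteristic polynomial $t^4+\tfrac{5c}{2}t^2+\tfrac{9c^2+16}{16}$, whose roots satisfy $t^2=\tfrac14(-5c\pm4\sqrt{c^2-1})$. For $\kappa=1$ these collapse to the double pair $\pm i\lambda$ with $\lambda^2=5c/4\neq0$, and for $\kappa=-16/9$ (where $9c^2+16=0$) to a double eigenvalue $0$ together with simple $\pm\nu$, $\nu^2=-5c/2\neq0$. In either case the repeated eigenvalue has geometric multiplicity one, so $M$ is \emph{not} semisimple: it has a nonzero nilpotent part together with a nonzero semisimple part carrying a nonzero eigenvalue. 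This dichotomy is exactly what separates the exceptional values from the regular ones (where $M$ is semisimple) and from the cycles (where $M$ is nilpotent).

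First I would use the nilpotent part to prove that $\hat f_\kappa$ is injective. The Pl\"ucker coordinates $\ell^a(w)$ are quadratic in the entries of $\exp(wM)$, hence quasi–polynomials $\sum_\mu q^a_\mu(w)e^{\mu w}$; because $M$ carries a nontrivial Jordan block, at least one $\ell^a$ contains a term $q^a_\mu(w)e^{\mu w}$ with $\deg q^a_\mu\ge1$ (for $\kappa=-16/9$ a term $\propto w\,e^{\pm\nu w}$, arising from $v_1\mapsto v_1+wv_0$ in a Jordan basis; for $\kappa=1$ a term $\propto w\,e^{\pm2i\lambda w}$). If $s\neq0$ were a period, i.e. $\hat f_\kappa(w+s)\equiv\hat f_\kappa(w)$, then $\ell^a(w+s)=\chi(s)\ell^a(w)$ for a common scalar $\chi(s)$, so $q^a_\mu(w+s)e^{\mu s}=\chi(s)q^a_\mu(w)$; matching leading coefficients gives $e^{\mu s}=\chi(s)$ and then $q^a_\mu(w+s)=q^a_\mu(w)$, forcing $q^a_\mu$ constant, a contradiction. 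Hence $\hat f_\kappa$ has no nonzero period and is injective, so $w$ is single–valued on the image, $\zeta$ is exact, $w$ descends to a holomorphic $h:S_\star\to\C$, and $f|_{S_\star}=X\cdot\hat f_\kappa\circ h$ with $X=A_0$, as in the proof of Lemma~\ref{lemma2}.

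The contradiction then comes from the semisimple part. Since $M$ has at least two distinct eigenvalues, the $\ell^a(w)$ involve at least two distinct exponentials, so the projective limit $\lim_{w\to\infty}\hat f_\kappa(w)$ depends on the direction of approach: for generic $\arg w$ the coordinate whose leading exponent $\mu$ maximizes $\re(\mu\,w/|w|)$ dominates, and which sector dominates changes with $\arg w$. Now $S$ is compact, so $h$ must be singular at some $p_0\in D$. An essential singularity is impossible, for then $h(\dot U)$ would be dense in $\C$ and $f=X\hat f_\kappa\circ h$ dense in the curve $\hat f_\kappa(\C)$ near $p_0$, contradicting continuity of $f$ at $p_0$. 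Hence $h$ is meromorphic on $S$, and nonconstant (otherwise $f$ is constant), so it has a pole at some $p_0\in D$, say $h\sim c_0z^{-k}$ in a chart centered at $p_0$. As $z\to0$ the argument of $h$ sweeps all angles, so $w=h(z)\to\infty$ in every direction; by the previous observation $\hat f_\kappa(h(z))$ cannot converge to the single value $f(p_0)$, contradicting holomorphy of $f$ at $p_0$. Therefore no compact isotropic curve of exceptional type with constant bending exists.

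The main obstacle is the injectivity step: one must check that passing from $\exp(wM)$ to the \emph{quadratic} Pl\"ucker coordinates does not annihilate the Jordan–block contribution, i.e. that some $\ell^a$ genuinely retains a polynomial–times–exponential term. This is the only place where the particular vectors ${\bf e}_1,{\bf e}_2$ enter, and it reduces to verifying that the $v_1\wedge v_\pm$–component of ${\bf e}_1\wedge{\bf e}_2$ in a Jordan basis of $M$ is nonzero (equivalently, that $\langle{\bf e}_1,{\bf e}_2\rangle$ does not meet the generalized eigenspaces in a way that kills the linear term). The remaining steps — the characteristic polynomial, the direction–dependence of the limit at infinity, and the pole argument — are structural and routine.
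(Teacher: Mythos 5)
Your strategy coincides with the paper's up to the descent step: both proofs reduce $f$, on the universal cover of $S_\star$, to an explicit quasi-polynomial model curve (for $\kappa=1$ the paper uses $\hat f(z)=[({\bf e}_1+e^{z}{\bf e}_3+z{\bf e}_4)\wedge ({\bf e}_2+z{\bf e}_3-e^{-z}{\bf e}_4)]$, which is exactly the $\exp(wM)$-orbit you describe), and both then produce a holomorphic $h:S_\star\to\C$ with $f|_{S_\star}=X\cdot\hat f_\kappa\circ h$; your Jordan-block analysis of $M$ correctly isolates why $\kappa=1$ and $\kappa=-16/9$ are the degenerate values. The endgame is where you diverge. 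The paper picks $p_0$ with $f(p_0)\notin{\mathcal U}$, writes $f$ near $p_0$ as $[({\bf e}_1+z^{-\ell}(m^3_1{\bf e}_3+m^4_1{\bf e}_4))\wedge\cdots]$, matches these components against the exponential formulas, and via a periodicity argument on small circles arrives at $\mu^3_1=z^{-\ell}e^{\mu^4_1}+2\pi\check n i$, a bounded-versus-unbounded contradiction. You instead classify the singularity of $h$ (Casorati--Weierstrass excludes an essential singularity, compactness then forces a pole) and note that $\arg h$ sweeps all directions at a pole while the projective limit of $\hat f_\kappa(w)$ at infinity is direction-dependent; for $\kappa=1$ the three sectorial limits $[{\bf e}_2\wedge{\bf e}_3]$, $[{\bf e}_1\wedge{\bf e}_4]$, $[{\bf e}_3\wedge{\bf e}_4]$ are indeed distinct. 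This is a legitimate, arguably cleaner, alternative endgame.

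Two steps need repair before the argument closes. First, ``no nonzero period'' is strictly weaker than injectivity: $\hat f_\kappa(w_1)=\hat f_\kappa(w_2)$ for a single pair of points is five scalar equations, not a functional identity in $w$ whose polynomial coefficients you can match, so your period argument does not deliver the injectivity needed to descend $w$ to $h$ (equivalently, the deck-invariance of $w$). The fix is easy and different in nature: in the normalization $[({\bf e}_1+\cdots)\wedge({\bf e}_2+\cdots)]$ the Lagrangian plane determines the symmetric matrix $(m_{ij})$, and for both exceptional models the off-diagonal entry recovers $w$ (it equals $w$ itself when $\kappa=1$). Second, distinct dominant exponents do not by themselves give distinct projective limits; one must check that the leading coefficient vectors of the competing exponential sectors are non-proportional (compare $[1:w:w^2]$ in $\CP^2$, which has the same limit in every direction). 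This holds for both exceptional models but must be verified, in particular for $\kappa=-16/9$, for which you exhibit no explicit curve. Neither issue is structural, and with these two verifications supplied your proof is complete.
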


\begin{proof} 
We shall give the proof in the case $\kappa=1$. The same argument can be used in the other case.  
The general structure of the reasoning is quite similar to that of the previous proof. 
First, observe that
\begin{equation}\label{fffr1}
  \hat f :  \C \ni z\mapsto [({\bf e}_1+e^{z}{\bf e}_3+z{\bf e}_4)\wedge ({\bf e}_2+z{\bf e}_3-e^{-z}{\bf e}_4)]\in \Q_3
    \end{equation}
is an embedded isotropic curve with constant bending $\kappa=1$. 
By contradiction, let $f:S\to \Q_3$ be a compact isotropic curve with bending 1.  Consider the exceptional locus $D=|\Delta_f|\cup |\Delta_{f^{\sharp}}|\cup |\Delta_{\delta}|$ and put $S_{\star}=S\setminus D$. 
In analogy with Lemma \ref{lemma1}, one  can prove that
$f(S_{\star})\subseteq X\cdot \hat f(\C)$, for some $X\in \Sp(2,\C)$. 
Replacing $f$ with $X^{-1}f$, we may assume $f(S_{\star})\subseteq \hat f(\C)$. 
Since $\hat f$ is an embedding, there exists a holomorphic map $h:S_{\star}\to \C$ such that 
$\hat f\circ h=f|_{S_{\star}}$. Choose $p_0\in S$, such that
$f(p_0)\notin {\mathcal U}$. As in Lemma \ref{lemma2}, $p_0\in D$. Consider a coordinate system $(U,z)$ centered at $p_0$,
such that 
$U\cap D=\{p_0\}$. The next argument is of a local nature, so we assume that $U$ is an open disk on the complex plane 
and take $p_0=0$. 
There exist  a positive integer $\ell$ and holomorphic functions $m^3_1,  m^4_1,  m^3_2 :U\to \C$, not all vanishing at the origin
and not identically zero on $U$, 
such that
\begin{equation}\label{fffr2}
  f|_{\dot{U}}=[({\bf e}_1+z^{-\ell}(m^3_1{\bf e}_3+m^4_1{\bf e}_4))\wedge ({\bf e}_2+z^{-\ell}(m^4_1{\bf e}_3+m^4_2{\bf e}_4))],
    \end{equation}
where $\dot{U}=U\setminus \{p_0\}$. Possibly shrinking $U$, there exist integers 
$0\le k_1^3,k_1^4,k_2^4\le \ell$, at least one of them zero,  and holomorphic functions 
$\mu^3_1,\mu^4_1,\mu^4_2$, such that 
$m^3_1=z^{k^3_1}e^{\mu^3_1}$, $m^4_1=z^{k^4_1}e^{\mu^4_1}$, $m^4_2=z^{k^4_2}e^{\mu^4_2}$.  Comparing 
\eqref{fffr1} and \eqref{fffr2}, we have
$$
    h=z^{k^4_1-\ell}e^{\mu^4_1},\quad e^{h}=z^{k^3_1-\ell}e^{\mu^3_1},\quad e^{-h}=-z^{k^4_2-\ell}e^{\mu^4_2}.
         $$
These equalities hold on  $\dot{U}$. We claim that $k^3_1-\ell=0$. Consider a circle $\{e^{r+it} \mid t\in \R\}$ contained in $U$. 
Then, 
$h(e^{r+it})= a(t)+ib(t)$ and 
$\mu^3_1(e^{r+it})=a^3_1(t)+ib^3_1(t)$, where $a$, $b$, $a^3_1$, $b^3_1$ are periodic, of period $2\pi$. The equality $e^{h}=z^{k^3_1-\ell}e^{\mu^3_1}$ gives 
$a(t)+ib(t)=(k^{3}_1-\ell)(r+it)+a^3_1(t)+ib^3_1(t)+2\pi \hat ni$, for some $\hat n\in \Z$. The periodicity of $a$, $b$, $a^3_1$ and $b^3_1$ implies $k^3_1-\ell=0$. 
The same argument shows that $k^4_2-\ell=0$. Hence $k^4_1=0$ and 
$h=z^{-\ell}e^{\mu^4_1}$, $e^{h}=e^{\mu^3_1}$, $e^{-h}=-e^{\mu^4_2}$ on $\dot{U}$. Thus, 
$e^{\mu^3_1}=e^{z^{-\ell}e^{\mu^4_1}}$. In turn, this implies
$\mu^3_1=z^{-\ell}e^{\mu^4_1}+2\pi \check{n}i$, $\check{n}\in \Z$. 
This gives the seeked contradiction,
since the left hand side is bounded near $0$ and the right hand side is unbounded.
\end{proof}

\begin{ex}\label{ex3} 
It is easy to show that the Catenoid cousins of ${\mathcal H}^3$ (cf. \cite{Br1987}) with parameters 
$\mu$ and $\mu(1-2\mu)^{-1}$ are Goursat transforms of each other. So, from our perspective, it suffices to examine Catenoid cousins with $\mu \in (-1/2,0)$. 
We exclude the case $\mu = -1/3$ corresponding to a pseudo-Cyclide. The Catenoid cousin with parameter $\mu \in (-1/2,0)$, 
$\mu \neq -1/3$, is tamed by the standard isotropic curve \eqref{constben} with constant bending 
$$
    \kappa_{\mu}=\frac{4(1+2\mu+2\mu^2)^2}{4+16\mu-7\mu^2-18\mu ^3-9\mu ^4}\in (-\infty,-16/9)\cup (1,+\infty).
        $$
Two rotationally invariant flat fonts (cf. \cite{KUY-O}) of ${\mathcal H}^3$ with parameters $m$ and $1/m$ are Goursat transforms of 
each other. So, we may assume $m\in (0,1)$. If $m=1/2$, the flat front is a pseudo-Cyclide. Excluding this case, 
a rotationally invariant flat front with parameter $m\in (0,1)$, $m\neq 1/2$, is tamed by the standard isotropic curve with constant bending
$$
    \kappa_m=\frac{4(1+m)^2}{4m^4-17m^2+4}.
         $$
In particular, the Catenoid cousin with parameter $\mu \in (-1/2,0)$ is a Goursat transform of the rotationally invariant flat 
front with parameter $m=-\mu/(1+\mu)\in (0,1)$. The Catenoid cousins or the rotationally invariant flat fronts tamed 
by $W$-curves are those with rational parameters.
\end{ex}

\begin{ex}\label{ex4} 
Consider the isotropic curve
$$\hat f(z)= [({\bf e}_1+e^{z}{\bf e}_3+z{\bf e}_4)\wedge ({\bf e}_2+z{\bf e}_3-e^{-z}{\bf e}_4)]$$ 
with conformal bending $\kappa=1$.
The minimal surface tamed by $\hat f$ is the Catenoid. 
Consider
$$
  \hat f_{b}=\big(\cosh(\frac{b}{2}){I}_{4}+\sinh(\frac{b}{2})({\bf e}^1_2+{\bf e}^2_1-{\bf e}^3_4-{\bf e}^4_3)\big)\hat f,\quad b\in \R.
    $$
The 1-parameter family $\{\hat{\phi}_b\}_{b\in \R}$ of minimal surfaces tamed by $\{\hat f_b\}_{b\in \R}$ is  the  {\it Bonnet deformation of the Catenoid}  \cite{Bonnet, JMN, MN-Abd}.  With the exception of the Enneper surface and planes, the Bonnet family exhaust (up to similarity transformations of $\R^3$) the class of minimal surfaces with plane line of curvature. Note that all Bonnet minimal surfaces are conformal Goursat transforms of the catenoid. The minimal surface $\check \phi_b$  tamed by 
$\check  f_{b}=\big((-1)^{1/4}({\bf e}^1_1+ {\bf e}^2_2)- (-1)^{-1/4}({\bf e}_3^3+{\bf e}^4_4) \big)\hat f_b$ is the associate of $\hat \phi_b$ 
(for $b=0$, we have the Helicoid). Then,  $\{\check {\phi}_b\}_{b\in \R}$ is the {\it Thomsen deformation} of the Helicoid  
\cite{Thomsen}. The Thomsen surfaces, the Enneper surface and the plane are (up to a similarity of $\R^3$) the only minimal surfaces  
which are affine minimal as well \cite{Blaschke1923}. 
By construction, the Thomsen surfaces are conformal Goursat transforms of the Catenoid.

\end{ex}

\begin{ex}\label{ex5}  
Up to a linear change of the independent variable, the CMC 1 surfaces of 
${\mathcal H}^3$ and ${\mathcal H}^{2,1}$  tamed by the isotropic curve with constant bending $-16/9$ are the 
Enneper cousin and the spacelike Enneper cousin considered in \cite{Br1987, Lee2005}. 
They are conformal Goursat transforms of each other.
\end{ex}

\end{document}